\documentclass{article}

\usepackage[letterpaper,top=2cm,bottom=2cm,left=1.5cm,right=1.5cm,marginparwidth=0.5cm]{geometry}

\usepackage{graphicx} % Required for inserting images
\usepackage{amsmath}
\usepackage{graphicx}
\usepackage{amsfonts}
\usepackage[colorlinks=true, allcolors=blue]{hyperref}
\usepackage{amsthm}
\usepackage{mathtools}
\usepackage{bbold}
\usepackage[backend=biber, giveninits=true, url=false,doi=false]{biblatex}
\usepackage{nicematrix}

\newtheorem{theorem}{Theorem}[section]
\newtheorem{Lemma}[theorem]{Lemma}
\newtheorem{Proposition}[theorem]{Proposition}
\newtheorem{Corollary}[theorem]{Corollary}

\theoremstyle{definition}
\newtheorem{Definition}[theorem]{Definition}
\newtheorem{Example}[theorem]{Example}

\theoremstyle{remark}
\newtheorem{Remark}[theorem]{Remark}

\DeclareMathOperator{\Jac}{Jac}

\DeclareMathOperator{\vol}{vol}
\DeclareMathOperator{\Ric}{Ric}
\DeclareMathOperator{\Fluct}{Fluct}

  % le numéro de la note est vide

\makeatletter
\renewcommand{\@makefnmark}{}
\makeatother

\addbibresource{bibBoso.bib}

\title{Free energy of the Coulomb gas in the determinantal case on Riemann surfaces}
\author{Lucas Bourgoin }
\date{\today}

\begin{document}
\maketitle
\vspace{-2.65em}
\begin{abstract}
	%\footnote{2020 \emph{Mathematics Subject classification.} 58J52, 81V70, 82B21, 60G55, 14H81.}
    We derive the asymptotic expansion of the partition function of a Coulomb gas system in the determinantal case on compact Riemann surfaces of any genus $g$. Our main tool is the bosonization formula relating the analytic torsion and geometric quantities including the Green functions appearing in the definition of this partition function. As a result, we prove the geometric version of the Zabrodin-Wiegmann conjecture in the determinantal case.
\end{abstract}
\tableofcontents
\section{Introduction}
The Coulomb gas describes a system of $N$ charged particles interacting through a Coulomb interaction. Many properties of the Coulomb gas have been studied by several authors, for a recent account see \cite{serfaty2024lecturescoulombrieszgases,Ginibre} and references therein. In this article we are interested in this system on a compact Riemann surface $M$ of genus $g$. More precisely, we are interested in finding an asymptotic expansion of the logarithm of the partition function $\mathcal{Z}$ when the number $N$ of particles tends to infinity. We define the Coulomb gas partition function on the Riemann surface $M$ as follows: 
\begin{equation}\label{Eq:Z_Beta}
\mathcal{Z}_{\beta}=\frac{1}{N!}\int_{M^{N}}\mu^{N}\exp\left(2\beta\sum_{1\leq i<j\leq N} G(z_{i},z_{j})+\beta(N+g-1)\sum_{i=1}^{N} V(z_{i})\right)
\end{equation}where $G$ is the Green function of the scalar Laplacian: $$\Delta G(x,y)=-\frac{2\pi\delta(x-y)}{\sqrt{\det \rho}}+\frac{2\pi}{\vol(M,\rho)},  $$ with $\Delta$ the positive scalar Laplacian in some metric $\rho.$ $\beta >0$ is a real parameter physically corresponding to the inverse of the temperature, in the following, we will only consider $\beta=1.$ $V \in \mathcal{C}^{\infty}(M;\mathbb{R})$ is what we will call a quasi-subharmonic potential: $$\Delta V>-4\pi  ~~\text{ (see Equation \eqref{ConditionV})}$$ possibly depending on $N$ and $\mu$ is a volume form on $M$. For simplicity in our computation, we will consider $\mu$ to be the volume form associated with the Arakelov metric, see Section \ref{SectionArakelovMetric} for definition. The constant $N+g-1=k$ in front of $V$ in the definition of $\mathcal{Z}_{\beta}$ (Equation \eqref{Eq:Z_Beta}), corresponding to the degree of a Hermitian line bundle, proves to be a particularly convenient choice for our computation (see Equation \eqref{eq:HermitianPotential}). Our computations would still hold to be true with the standard constant $N$ before the Laplacian but it would make the computations and the formulae more cumbersome. We also note our choice to write $G$ for the Green function, for the notational consistency with Coulomb gas  literature, as opposed to the Arakelov geometry and bosonization formula literature, where the Green function is denoted by $\ln G$ (see \cite{ArakelovSJ_1974,WentworthArakelov,WentworthGluing} for instance). In statistical physics, the free energy is a particularly important quantity to describe systems and knowing the behaviour of this quantity for a large number of particles is of special interest \cite{Jancovici1994CoulombSS}. 

We will be looking for an expansion for $N \to \infty$ of this free energy $\ln \mathcal{Z}_{\beta}$ when $\beta=1$, the so-called determinantal case. We will find an asymptotic expansion of the form: 
$$\ln \mathcal{Z}_{\beta=1}=b_{2}N^{2}+a_{1} N\ln N + b_{1} N + a_{0} \ln N +b_{0} +o(1)$$ where the different terms are explicitly computed. The absence of $\sqrt{N}$ term is consistent with predictions in the determinantal case (see \cite[§9.3]{serfaty2024lecturescoulombrieszgases}). Our main result is that the coefficient $b_{0}$ turns out to contain the logarithm of the regularized zeta determinant of the scalar Laplacian. When $M=\mathbb{C}$, this term was conjectured to appear by Zabrodin and Wiegmann \cite{Zabrodin_2006}. More precisely, they considered a system on the plane of $N$ interacting Coulomb charges subject to a potential $W$. In this case, the partition function was $$Z_{N}=\int_{\mathbb{C}^{N}}|\Delta_{N}(z_{i})|^{2\beta}\prod_{j=1}^{N}e^{NW(z_{j})}d^{2}z_{j}$$ where $\Delta_{N}$ is the Vandermonde determinant and $\beta=1/T$ is the inverse temperature. They made a prediction for the free energy expansion: $$\ln Z_{N}=N^{2}F_{0}+N\ln(N)\tilde{F}_{1/2}+ N  F_{1/2}+\ln(N)\tilde{F}_{1}+F_{1}+O(N^{-1})$$ with explicit predicted formulae for the different terms of the expansion (see also \cite{Jancovici1994CoulombSS,ExactAsymptCoulombCanForresterTellezWiegmann2015,serfaty2024lecturescoulombrieszgases} for other presentations of this conjecture). In the case $M=\mathbb{C}$, the first two terms of the expansion have been computed for general $\beta$ and relations with the Gaussian free field have been shown \cite{LebléSerfatyLDP,AmeurHedenmalMakarov2011Fluctuations}.

Apart from setting $\beta=1$, another simplification in our computation is that we only consider quasi-subharmonic potentials (see Equation \eqref{ConditionV}), as explained above. This will correspond to the case where the support of the droplet is the whole Riemann surface instead of a compact domain of $\mathbb{C}$ as in \cite{Zabrodin_2006}.

 The case of $M$ being a compact Riemann surface is sometimes called the geometric Zabrodin-Wiegmann conjecture. It was introduced in \cite{Klevtsov2014RandomNormalMatrices} and was further studied in \cite{KlevtsovQuillen}, also in the determinantal case. We also note that the case of integer $\beta$ corresponds to the Laughlin states in the quantum Hall effect \cite{KlevtsovGenus,LiebRougerieLocalIncompressibLaughlin} and the version of the geometric Zabrodin-Wiegmann conjecture in this case can be found in \cite{KlevtsovFerrari2014, CanLaskinWiegmann2015,Shen:GeomZabrodin2025}. In \cite{Klevtsov2014RandomNormalMatrices,KlevtsovQuillen}, using the results of \cite{BismutGilletSouleQuillenMetric} the authors studied the variation of the partition function when changing the metrics on the Riemann surface and proved that the Liouville functional appeared in the variation of the constant term of the expansion. This result was indicating the presence of the determinant of the Laplacian in this term because the Liouville functional is involved in the anomaly, namely the variation of the determinant of the Laplacian when the metric changes, of the determinant of the scalar Laplacian (see for instance \cite{OSGOOD1988148,POLYAKOV1981207}, and Proposition \ref{VarDetSca}). In our main result, we will see the explicit expression of the constant term with the determinant of the scalar Laplacian appearing thus confirming the previous predictions. The fact that the determinant of the scalar Laplacian appears in the constant term points toward a relation with the Gaussian free field. The question investigated in this article is related to problems in random matrix theory, particularly for the Ginibre ensemble \cite{ForresterLogGasesAndRandomMatrices,Ginibre}, which has led to many works in many different settings concerning the potential $V$ (see for instance \cite{ameur2025twodimensionalcoulombgasfluctuations,Byun2024FreeEnergy,AllardForresterLahiryShenPartitionFunction,ByunAnomalousFreeEnergy,BauerschmidtBourgadeNikulaYayTwoDimCoulombQuasiFreeCLT,BBNYLocalDensity2D1OCP,RougerieFreeEnergyHoles}) and some analogous results have been found for the case of the sphere \cite{Ginibre,ByunPartition2023,ByunFreeEnergy2025} and of the torus \cite{Forrester_2006}, see also \cite{BorotGuionnetAsymptoticExpansionBetaMultiCut,BorotGuionnetOneCut} for the 1D-case.
 
  We also need to mention that an analogous result was recently obtained for the  Integer Quantum Hall state using a different method \cite{Shen:GeomZabrodin2025}. Similar questions were also studied on manifolds of higher dimension with a Hermitian line bundle (see \cite{Berman2014CMP327DetPPLDP+Boso} and the recent article \cite{Eum2025PartitionFunctionDPP}). In \cite{Berman2014CMP327DetPPLDP+Boso, Eum2025PartitionFunctionDPP, Shen:GeomZabrodin2025}, the authors studied determinantal point processes associated with a positive Hermitian line bundle, which is particularly related to the study of Quantum Hall effect. In our computation, we consider a Coulomb gas on a compact surface and we will obtain an asymptotic expansion of the partition function of the Coulomb gas which will not depend on a line bundle (see Theorem \ref{TheoremFinalN}). More precisely, \cite{Eum2025PartitionFunctionDPP} computed the variation of the expansion of a determinantal point process on Kähler manifolds with positive line bundles, which was computed in \cite{KlevtsovQuillen} for the Riemann surfaces case. In the following we will carry out our computations for a specific choice of metrics, namely the Green function will be associated with the canonical metric $\rho_{can}$ and the integration will be made with respect to the Arakelov metric $\rho_{Ar}$, and the partition function will thus be denoted by $\mathcal{Z}_{\rho_{Ar},\rho_{can},N}$ (see Definition \ref{DefFoncPart}). This choice is particularly convenient for the bosonization formula (see below and Section \ref{SectionBoso}). Moreover, since modifying the metric is equivalent to changing the potential $V$ it is enough to do the computations for one specific choice of metric.
  
\subsection{Main result}

Our main result is given in Theorem \ref{ThéorèmeFinalSansTheta} (see also Theorems \ref{TheoremFinalK} and \ref{TheoremFinalN}) in a precise version. We present here a simplified version of our result.

\begin{theorem}
With $V$ a quasi-subharmonic potential, the partition function satisfies:
$$\ln \mathcal{Z}_{\rho_{Ar},\rho_{can},N}^{}(V)=B_{2}N^{2}+A_{1}N\ln N+B_{1} N+A_{0}\ln N+B_{0}+O(N^{-1}\ln N)
$$
when $N  \to +\infty$, where the constants $A_{i}$ and $B_{i}$ are given explicitly in terms of the genus of $M$ and natural geometric functionals in Theorem \ref{ThéorèmeFinalSansTheta}.
\end{theorem}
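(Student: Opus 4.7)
The plan is to exploit the determinantal structure at $\beta=1$ to reduce the $N$-dimensional integral $\mathcal{Z}$ to a finite-dimensional determinantal expression, then use bosonization to identify the relevant line bundle and extract its Quillen/analytic-torsion content. The starting point is the classical rewriting
$$\exp\Bigl(2\sum_{i<j}G(z_i,z_j)\Bigr) \cdot \prod_i e^{(N+g-1)V(z_i)} \;=\; \|\det(s_\alpha(z_j))\|^2_{h^N}\cdot (\text{purely geometric factor})$$
where $(s_\alpha)_{\alpha=1}^{N}$ is any basis of $H^0(M,L^N)$ for a line bundle $L$ of degree one built from the divisor implicit in the Arakelov Green function, and $h$ is the Hermitian metric encoded by $V$ and $\rho_{Ar}$. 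The constant of proportionality is exactly what the bosonization formula computes in terms of $\det'\Delta$, the Faltings delta invariant, and integrals of $V$ against curvature. Applying Andréief's identity,
$$\frac{1}{N!}\int_{M^N}\|\det s_\alpha(z_j)\|^2_{h^N}\, \mu^{N} \;=\; \det\bigl(H_{\alpha\beta}\bigr),\qquad H_{\alpha\beta}=\int_M\langle s_\alpha,s_\beta\rangle_{h^N}\,\mu,$$
converts $\mathcal{Z}$ into the determinant of the Gram matrix of $H^0(M,L^N)$ times the bosonization prefactor, and this is the crucial step where the analytic torsion $\det\Delta$ enters through the bosonization identity.

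Next I would pin down the bosonization formula in the precise form required here. The choice $N+g-1$ in front of $V$ is made exactly so that $\deg(L^N \otimes K^{g-1}) = N$ (or the shifted version giving $h^0 = N$ by Riemann--Roch without corrections from $h^1$ for $N$ large), which matches the degree used in the Vandermonde-style rewriting. I would invoke the version of bosonization for the scalar Laplacian (à la Alvarez--Moore--Vafa / Fay / Wentworth) that relates $\exp(\sum_{i\neq j} G(z_i,z_j))$ to the squared modulus of a theta-functional combination of prime forms, normalized so that the only non-geometric ingredients are (i) $\det{}'\Delta_\rho$, (ii) the norms of the canonical sections, and (iii) the Hermitian metric on $L^N$ induced by $V$. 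This is precisely the place where $B_0$ inherits the $\ln \det{}'\Delta$ contribution predicted by Zabrodin--Wiegmann.

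The remaining analytical task is the asymptotic of $\det H_{\alpha\beta}$ as $N\to\infty$. I would diagonalize or, equivalently, relate this determinant to the Bergman kernel $B_N(z,z)$ of $H^0(M,L^N)$ with the metric $h^N$, using the identity $\log\det H = \Tr\log H$ expanded via off-diagonal Bergman-kernel asymptotics. The Tian--Yau--Zelditch / Ma--Marinescu expansion of $B_N$ provides
$$B_N(z,z)=N + a_1(z) + a_2(z) N^{-1} + \cdots,$$
with $a_1$ and $a_2$ expressed via the curvature of $h$ (i.e.~$\Delta V$) and the scalar curvature of $\rho_{Ar}$. Integrating these expansions against $\mu$ produces, term by term, the coefficients $B_2$ (from the $N^2$ self-interaction, independent of the Gram determinant), $A_1N\ln N$ (from $\dim H^0 = N$ copies of the $\log N$ coming from $\log B_N$ evaluated on the diagonal), $B_1$ (from the $a_1$ integral, which involves $\int V\Delta V$ and curvature terms), and $A_0\ln N$ (which comes from the interplay between the Bergman kernel expansion and the bosonization prefactor; $A_0$ will be a multiple of the Euler characteristic $2-2g$, as expected from conformal anomaly considerations).

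The main obstacle is bookkeeping: keeping all metric normalizations consistent across the Arakelov metric, the canonical (Bergman) metric on $K$ used in the bosonization formula, and the Hermitian metric $h^N$ defined by $V$ and the Arakelov Green function. In particular, the constant $B_0$ requires tracking every metric-dependent correction introduced by passing from an arbitrary $\rho$ to $\rho_{Ar}$, and combining the Polyakov anomaly for $\det{}'\Delta$ with the Faltings invariant and the Liouville functional so that the final expression is intrinsic. The error term $O(N^{-1}\ln N)$ is then dictated by the order at which the Ma--Marinescu expansion is truncated, combined with the standard remainder estimates for $\log\det$ of the Gram matrix; once the bosonization identity is established in a sufficiently precise form, this is a routine (though lengthy) computation.
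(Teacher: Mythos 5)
Your overall strategy (bosonization to turn the Coulomb interaction into a determinantal expression, then Andr\'eief/Gram reduction of the $N$-fold integral) is indeed the skeleton of the paper's proof, but two steps in your plan have genuine gaps.

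First, for genus $g\geq 1$ the factor you call a ``purely geometric factor'' is not independent of the integration variables. The bosonization formula (Fay, Alvarez-Gaum\'e et al., Wentworth) produces, besides $\det\Delta_{\rho_{Ar}}$ and the Gram/section data, the factor $\|\theta\|^{2}\bigl([L]-\sum_{i}p_{i}-\mathcal{D},\tau\bigr)$, whose argument depends on the configuration $(p_{1},\dots,p_{N})$. Hence your Andr\'eief step does not apply to $\mathcal{Z}$ itself; it applies only to the modified partition function $\mathcal{Z}^{(\theta)}$ in which this theta factor is inserted by hand. The paper must then recover $\mathcal{Z}$ from $\mathcal{Z}^{(\theta)}$ by averaging the line bundle $[L]$ over the Jacobian, using the exact evaluation $\int_{\mathbb{T}^{2g}_{\tau}}\|\theta(Z,\tau)\|^{2}dZ=(2^{g}\det\mathfrak{Im}\tau)^{-1/2}$ together with uniformity in $[L]$ of the $O(N^{-1}\ln N)$ remainders (this uniformity, from Finski's expansion, is what legitimizes exchanging the expansion with the Jacobian integral, and it shifts $B_{0}$ by $\tfrac12\ln\det\mathfrak{Im}\tau+\tfrac g2\ln 2$). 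Your proposal has no counterpart of this step, so as written it proves at best the expansion of $\mathcal{Z}^{(\theta)}$, not of $\mathcal{Z}$.

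Second, your mechanism for the asymptotics --- $\ln\det H=\Tr\ln H$ expanded via Tian--Yau--Zelditch/Ma--Marinescu diagonal Bergman kernel asymptotics --- cannot deliver the constant term $B_{0}$, which is the whole point of the theorem. A variational argument in the metric (differentiating $\ln\det H$ along $h_{t}=e^{ktV}h$ and integrating the Bergman density) only yields the \emph{difference} between the $V$ and $V=0$ Gram determinants, i.e.\ the exact anomaly terms $k^{2}S_{2}+\tfrac k2 S_{1}$ and the local curvature integrals; it says nothing about the absolute constant-order content of the reference ($V=0$) partition function. That content --- the $\zeta'(-1)$ constants, the Wentworth constants $B_{g,k}$, and ultimately $\tfrac12\ln\det\Delta$ --- enters through the asymptotic expansion of the determinant of the magnetic Laplacian (the analytic torsion) for the admissible metric, taken to constant order: in the paper this is Finski's theorem combined with the exact Quillen-type anomaly of $\det\langle\omega_{i},\omega_{j}\rangle/\det\square_{L}$ and the rescaling formula for $\det\square_{L}$. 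Local Bergman coefficients $a_{1},a_{2}$ are insufficient for this; no truncation of the TYZ expansion produces $\zeta'(-1)$ or the genus-dependent constants in $\beta_{1},\beta_{0}$. You do invoke ``Quillen/analytic-torsion content'' in passing, but the concrete asymptotic input (an expansion of the analytic torsion to $O(k^{-1}\ln k)$, with control of the constant) is missing from your argument, and without it neither $A_{1}=-\tfrac12$, nor $A_{0}$, nor $B_{0}$ can be pinned down.
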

The indices $\rho_{Ar},\rho_{can}$ indicate a specific choice of metrics for the computation (see Definition \ref{DefFoncPart}).
To perform our computation, we will introduce a positive line bundle $L$ of degreee $k=N+g-1$ which will not appear in the final expansion and we will use the bosonization formula \cite{Fay1992KernelFA,WentworthGluing,Alvarez-Gaume:1987BosonizationHigherGenus,DHoker:1988pdl,KlevtsovGenus} relating the analytic torsion introduced by Ray-Singer in \cite{RAY1971145,RaySingerAnalyticTorsion1973} (which is in our case the zeta regularised determinant of the magnetic Laplacian $\square_{L,\rho_{Ar},h}$ on some positive Hermitian line bundle $(L,h)$ which will be presented in Section \ref{SectionDeterminantOftheLaplacians}) and different quantities for the Arakelov metric $\rho_{Ar}$ \cite{ArakelovSJ_1974,FaltingsCalculusArithmeticSurface1984}, those objects will be introduced in Section \ref{SectionDeterminantOftheLaplacians} and Section \ref{SectionArakelovMetric}. More precisely, the bosonization formula is given by (see Proposition \ref{Pr:FormuleBoso}): $$\frac{\det \square_{L,\rho_{Ar},h}}{\det \langle \omega_{i}, \omega_{j} \rangle_{\rho_{Ar},h}}= B_{g,k} \frac{\exp\left(2\sum_{1\leq i <j \leq N} G_{Ar}(p_{i},p_{j}) \right)}{\| \det \omega_{i}(p_{j})\|_{h}^{2}}\bigg(\frac{\det\Delta_{\rho_{Ar}}}{\vol(M,\rho_{Ar}) \det \mathfrak{Im}(\tau)}\bigg)^{-1/2}\|\theta\|^{2}\big([L]-\sum_{i=1}^{N}p_{i}-\mathcal{D},\tau\big)$$  with $G_{Ar}$ the Arakelov-Green function (see Section \ref{SectionArakelovMetric}), $(\omega_{i})_{1\leq i\leq N}$ a basis of $H^{0}(M,L)$ (which is of dimension $N=k+1-g$ from Riemann-Roch theorem) and $\|\det\omega_{i}(p_{j})\|_{h}^{2}=|\det\omega_{i}(p_{j})|^{2}h(p_{j}),$ and $p_{j}$ are distinct points on the Riemann surface $M$. The constants $B_{g,k}$, depending only on the genus $g$ of the surface $M$ and the degree $k=N+g-1$ of the line bundle $L$, were computed by Wentworth (see \cite{PreciseWentwhorthI,WentworthGluing}) and their values are recalled in Section \ref{SectionBoso}. We refer to Section \ref{SectionBoso} for a more precise presentation of this formula. All terms appearing in the previous formula will be defined in Sections \ref{SectionGreen}, \ref{SectionDeterminantOftheLaplacians} and \ref{SectionArakelovMetric}.
 To compute the asymptotic expansion we will use formulae for the asymptotic expansion of the analytic torsion \cite{FINSKI20183457,BismutVasserotAsymptoticAT}. 
The partition function is computed for the Green function (see Section \ref{SectionGreen}) for the canonical metric (see Section \ref{SectionArakelovMetric}) and the integration is made with respect to the Arakelov metric (see Section \ref{SectionArakelovMetric}) using the bosonization formula (see Section \ref{SectionBoso}) in the Section \ref{SectionCalcul}. Moreover, since in the bosonization formula we see that a theta function appears, it will first be easier to compute the modified partition function that we denote $\mathcal{Z}^{(\theta)}$ (see Equation \eqref{ModifPartTheta}) before computing the partition function using an averaging procedure and computations of Section \ref{AnnexeCalculTheta}.

From this asymptotic expansion result, we will deduce convergence of the fluctuations. Let $f: M \to \mathbb{R}$ be a smooth function, then we define: $$\Fluct_{N}f= \sum_{i=1}^{N}f(x_{i})-N\int_{M} \mu_{V} f, $$ with $\mu_{V}$ the equilibrium volume form defined in Equation \eqref{MuV}. Then, proceding like \cite[Section 1.3.1]{ameur2025freeenergyfluctuationsrandom} and \cite[Section 1.3.2]{ameur2025twodimensionalcoulombgasfluctuations}, using the cumulant generating function, we will find the following 

\begin{Corollary}\label{FluctuationsConvergence}
	If $V$ is quasi-subharmonic, and $f : M \to \mathbb{R}$ is a smooth function, then, for the probability measures $\mathbb{P}^{(V)}_{\rho_{can},\rho_{Ar},N}$ given in Equation \eqref{eq:Proba},  $\Fluct_{N}f$ converges in distribution to a normal random variable with mean $m_{f}$ and variance $v_{f}$, where   $$ m_{f} =\frac{1}{8\pi}\int_{M} \mu_{\rho_{can}}f \left(R_{\rho_{can}}-4\pi\chi(M)-\chi(M)\Delta_{\rho_{can}}V+\Delta_{\rho_{can}} \ln (f_{V}) \right)  ~~~~\text{ and } ~~v_{f}=\frac{1}{4\pi}\int_{M}\mu_{\rho_{can}} f\Delta_{\rho_{can}} f >0 $$
	where $\chi(M)=2(1-g)$ is the Euler characteristic of the surface $M$, $R_{\rho_{can}}$ is the scalar curvature for the canonical metric and $f_{V}$ is defined in Equation \eqref{MuV}.
\end{Corollary}
This result, where we consider a Riemann surface without boundary, seems consistent with those obtained in \cite[Theorem 1]{LebléSerfatyFluctuationsCoulombGases}, \cite[Theorem 1.4]{ameur2025twodimensionalcoulombgasfluctuations} where the main difference is the curvature term in the mean of the limit random variable and in our case $f_{V}$ is the density of the equilibrium measure.

\begin{Remark}
	Note that in our case $\Delta$ denotes the positive Laplacian, namely the opposite of the usual Laplacian, therefore the variance term in the previous Corollary is positive. 
\end{Remark}
\subsubsection*{Aknowledgement} We want to thank Siarhei Finski for answering our questions about his results in \cite{FINSKI20183457} which was of great help in obtaining our final result.

\section{Preliminaries}
In this section, we introduce notations and conventions that will be used throughout the article. We will consider $M$ to be a compact Riemann surface of genus $g$, and $\rho=\rho(z)|dz|^{2}$ a metric on $M$. We will denote $\mu_{\rho}=\frac{i}{2}\rho(z)d z \wedge d\bar{z}$ the volume form associated with the metric $\rho$. Since we are on Riemann surfaces, the volume form is also the Kähler form associated with this metric. $\Delta_{\rho}$ will denote the positive Laplace operator for the metric $\rho$ which satisfies $$(\Delta_{\rho} f)dz\wedge d\bar{z} =-4\rho(z)^{-1}\partial\bar\partial f.$$ $R_{\rho}$ will denote the scalar curvature for the metric $\rho$. 
Locally: $$R_{\rho}(z)=\Delta_{\rho} \ln\rho(z).$$ The scalar curvature is twice the Gaussian curvature and from the Gauss-Bonnet Theorem we see that: $$\int_{M}\mu_{\rho}R_{\rho}=8\pi(1-g)=4\pi\chi(M).$$ We will then denote the average scalar curvature for the metric $\rho$ by $\bar{R}_{\rho}=\frac{8\pi(1-g)}{\vol( M,\rho)},$ where $\vol(M,\rho)$ is the volume of the surface $M$ for the metric $\rho$: $$\vol(M,\rho)=\int_{M}\mu_{\rho}.$$ If we consider two metrics $\rho$ and $\rho_{0}$ related by \begin{equation}\label{MetriqueSigma}
    \rho=e^{2\sigma}\rho_{0},
\end{equation}
with $\sigma$ a smooth function on $M$, then the volume form, the scalar Laplacian and the scalar curvature change as: 
\begin{align*}
    \mu_{\rho}=e^{2\sigma}\mu_{\rho_{0}},~~~~
    \Delta_{\rho}=e^{-2\sigma}\Delta_{\rho},~~\text{ and }~~
    R_{\rho}=e^{-2\sigma}(R_{\rho_{0}}+2\Delta_{\rho_{0}}\sigma).
\end{align*}
Moreover we can also note that 
\begin{align}\label{MetriquePhi}
    \mu_{\rho}=\frac{\vol(M,\rho)}{\vol(M,\rho_{0})}\mu_{\rho_{0}}+i\vol(M,\rho)\partial \bar{\partial}\phi
\end{align}
where $\phi$ is such that \begin{equation}
    e^{2\sigma}=\frac{\vol(M,\rho)}{\vol(M,\rho_{0})}-\frac{1}{2}\vol(M,\rho)\Delta_{\rho}\phi.
\end{equation} 

We will also use different classical functionals depending on the metric with $\sigma$ and $\phi$ as in Equations \eqref{MetriqueSigma} and \eqref{MetriquePhi}. 
\begin{Definition}\label{DefLiouvMabAubYau}
    With the previous notation, we define: \begin{itemize}
        \item 
     the \emph{Liouville functional}  
    $$S_{L}(\sigma,\rho_{0})=\int_{M}\mu_{\rho_{0}}(\sigma\Delta_{\rho_{0}}\sigma+R_{\rho_{0}}\sigma),$$

     \item the \emph{Mabuchi functional}  
    $$S_{M}(\sigma,\phi,\rho_{0})=\int_{M}\mu_{\rho_{0}} \left(-2\pi(1-g)\phi\Delta_{\rho_{0}}\phi + \left( \frac{8\pi(1-g)}{\vol(M,\rho_{0})}-R_{\rho_{0}}\right)\phi+\frac{4\sigma e^{2\sigma}}{\vol(M,\rho)}\right),$$
    \item the \emph{Aubin-Yau functional}  
    $$S_{AY}(\phi,\rho_{0})=-\int_{M}\mu_{\rho_{0}}\left(\frac{1}{4}\phi \Delta_{\rho_{0}}\phi - \frac{\phi}{\vol(M,\rho_{0})}\right).$$
    \end{itemize}
\end{Definition}

The preceding functionals will appear in our final result.
\begin{Remark}
    If \( c \in \mathbb{R} \) is a constant, then  
    $$S_{AY}(\phi+c,\rho_{0})=S_{AY}(\phi,\rho_{0})+c.$$
\end{Remark}
All the functionals above satisfy cocycle conditions, namely 
\begin{align*} 
S_{L}(\sigma,\rho_{0})&=-S_{L}(-\sigma,e^{2\sigma}\rho_{0}) &  S_{L}(\sigma_{1}+\sigma_{2},\rho_{0})&=S_{L}(\sigma_{1},\rho_{0})+S_{L}(\sigma_{2},e^{2\sigma_{1}}\rho_{0})\\
    S_{M}(\sigma,\phi,\rho_{0})&=-S_{M}(-\sigma,-\phi,e^{2\sigma}\rho_{0}) & S_{M}(\sigma_{1}+\sigma_{2}, \phi_{1}+\phi_{2},\rho_{0})&=S_{M}(\sigma_{1},\phi_{1},\rho_{0})+S_{M}(\sigma_{2},\phi_{2},e^{2\sigma_{1}}\rho_{0}). 
\end{align*}

In the following we will make use of the bosonization formula and therefore we will introduce a positive line bundle $L$ over $M$ of degree $k=N+g-1.$ In all that follows we will assume that our line bundle can be written as $$L=L_{0}^{k}\otimes E$$ where $L_{0}$ is a degree $1$ line bundle and $E$ is a degree $0$ line bundle. The line bundle $L$ will be endowed with a metric $h.$ This will allow us to define the magnetic field $B(\rho,h)$ for this line bundle, locally: $$B(\rho,h)=\frac{1}{2}\Delta_{\rho} \ln h.$$
If we modify the Hermitian metric by \begin{equation}\label{MetriqueHermPsi}h=e^{-k\psi}h_{0},\end{equation} and the metric $\rho$ as in Equation \eqref{MetriqueSigma}, then the magnetic field will become: \begin{equation}\label{ModifMag}
B(\rho,h)=e^{-2\sigma}\left(B(\rho_{0},h_{0}) -\frac{k}{2}\Delta_{\rho_{0}}\psi\right).
\end{equation} 
Since we want the magnetic field to be positive, we have to require that \begin{equation}\label{eq:CondMagPos}
\Delta_{\rho_{0}}\psi <\frac{2B(\rho_{0},h_{0})}{k}.\end{equation}
The preceding hypotheses will be made in the following.

On this bundle we will consider the Dolbeault operator $\bar{\partial}_{L}: \Omega^{0}(M,L) \to \Omega^{0,1}(M,L).$ The formal adjoint of $\bar{\partial}_{L}$ will be denoted by $\bar{\partial}^{\dagger}_{L}.$

\section{Green functions and Coulomb Gas}\label{SectionGreen}
\subsection{Green functions}
In this article we will be interested in the partition function of the Coulomb gas which involves Green functions.
\begin{Definition}[Green function, {\cite[p.352]{FERRARI-GravitaionMabuchi}}]
Let $(M,\rho)$ a Riemann surface and $\Delta_{\rho}$ the Laplacian associated with the metric $\rho$. Then the \emph {Green function} associated with $\rho$ is the unique function $G_{\rho}$ satisfying $$\Delta_{\rho,x} G_{\rho}(x,y)=-\frac{2\pi \delta(x-y)}{\sqrt{\det \rho}}+\frac{2\pi}{\vol(M,\rho)}$$ and $$\int_{M}\mu_{\rho}(x) G_{\rho}(x,y)=0.$$
\end{Definition}
We could also consider the Arakelov-Green function associated with the metric $\rho$. as in \cite{KlevtsovGenus} and \cite{DHokerPhongFunctionMandelstamDiag}.
\begin{Definition}[Arakelov-Green function associated with a metric, {\cite[eq. (4.38)]{KlevtsovGenus}},{\cite[eq (3.3),(3.4)]{DHokerPhongFunctionMandelstamDiag})}]\label{GreenAr}
Let $(M,\rho)$ a Riemann surface of genus $g\neq1$ with metric $\rho$ and Laplace operator $\Delta_{\rho}$. Then the \emph{Arakelov-Green} function associated with $\rho$ is the unique function $G_{Ar,\rho}$ satisfying $$\Delta_{\rho,x} G_{Ar,\rho}(x,y)=-\frac{2\pi\delta(x-y)}{\sqrt{\det \rho}}+\frac{R_{\rho}(x)}{4(1-g)}$$ and $$\int_{M}\mu_{\rho}(x)R(x)G_{Ar,\rho}(x,y)=0$$
\end{Definition}

The Green-function and the Arakelov-Green function verify the following modifications when changing the metric:

\begin{Proposition}[{\cite[eq. (3.31)]{FERRARI-GravitaionMabuchi}}]\label{VarGreen}
    For $\rho$ and $\rho_{0}$ two metrics as in Equations \eqref{MetriqueSigma} and \eqref{MetriquePhi}: $$G_{\rho}(z,w)-G_{\rho_{0}}(z,w)=-\pi (\phi(z)+\phi(w))+2\pi S_{AY}(\phi,\rho_{0})$$

Similarly for the Arakelov-Green function, we have $$G_{Ar,\rho}(z,w)=G_{Ar,\rho_{0}}(z,w)+\frac{1}{2(1-g)}(\sigma(z)+\sigma(w))-\frac{S_{L}(\sigma,\rho_{0})}{8\pi(1-g)^{2}} .$$
\end{Proposition}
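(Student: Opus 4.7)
The plan is to prove both identities by the same strategy: compute the Laplacian (with respect to the reference metric $\rho_0$) of the putative difference, show that it vanishes, then fix the resulting additive freedom using symmetry in the two arguments and the normalization condition of the relevant Green function.

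For the ordinary Green function, I would start by using $\Delta_\rho=e^{-2\sigma}\Delta_{\rho_0}$ to rewrite the defining equation of $G_\rho$ with respect to $\Delta_{\rho_0}$. Writing the $\delta$-distribution with respect to $\mu_\rho$ and using $\sqrt{\det\rho}=e^{2\sigma}\sqrt{\det\rho_0}$ turns the singular term into the $\delta$ source of $G_{\rho_0}$ with the correct normalization. For the constant term, I would substitute the representation $e^{2\sigma}=\frac{\vol(M,\rho)}{\vol(M,\rho_0)}-\frac{1}{2}\vol(M,\rho)\Delta_{\rho_0}\phi$ coming from Equation \eqref{MetriquePhi}. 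A short computation then yields
\[
\Delta_{\rho_0,x}\bigl(G_\rho(x,y)-G_{\rho_0}(x,y)+\pi\phi(x)\bigr)=0,
\]
so that $G_\rho-G_{\rho_0}+\pi\phi(x)$ is a function $C(y)$ only. Exchanging $x$ and $y$ and using symmetry of both Green functions forces $C(y)+\pi\phi(y)=C_0$ for some constant, giving $G_\rho(z,w)-G_{\rho_0}(z,w)=-\pi(\phi(z)+\phi(w))+C_0$.

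The value of $C_0$ is then pinned down by the normalization $\int_M\mu_\rho(x)G_\rho(x,y)=0$. I would compute $\int_M\mu_\rho G_{\rho_0}$ by inserting $\mu_\rho=e^{2\sigma}\mu_{\rho_0}$, replacing $e^{2\sigma}$ by its expression in terms of $\phi$, and integrating by parts $\Delta_{\rho_0}\phi$ against $G_{\rho_0}(\cdot,y)$ using the defining equation of $G_{\rho_0}$. Comparing the resulting expression against the definition of $S_{AY}$ in Definition \ref{DefLiouvMabAubYau} (after the $\phi(y)$-terms cancel) yields exactly $C_0=2\pi S_{AY}(\phi,\rho_0)$.

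The Arakelov case follows the same template, with two modifications: the inhomogeneous term in the defining equation is now $R_\rho/(4(1-g))$ rather than $1/\vol(M,\rho)$, and the normalization involves $R$ as a weight. Using $R_\rho=e^{-2\sigma}(R_{\rho_0}+2\Delta_{\rho_0}\sigma)$, one finds
\[
\Delta_{\rho_0,x}\Bigl(G_{Ar,\rho}(x,y)-G_{Ar,\rho_0}(x,y)-\tfrac{\sigma(x)}{2(1-g)}\Bigr)=0,
\]
so symmetry gives $G_{Ar,\rho}-G_{Ar,\rho_0}=\frac{1}{2(1-g)}(\sigma(z)+\sigma(w))+C_0$. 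To fix $C_0$, I would use $\int_M\mu_\rho R_\rho G_{Ar,\rho}(\cdot,y)=0$, noting that $\mu_\rho R_\rho=(R_{\rho_0}+2\Delta_{\rho_0}\sigma)\mu_{\rho_0}$ and that $\int_M\mu_\rho R_\rho=8\pi(1-g)$ by Gauss--Bonnet. Integrating $\Delta_{\rho_0}\sigma$ by parts against $G_{Ar,\rho_0}$ and recognizing $\int\sigma\Delta_{\rho_0}\sigma\mu_{\rho_0}+\int\sigma R_{\rho_0}\mu_{\rho_0}=S_L(\sigma,\rho_0)$ will produce $C_0=-S_L(\sigma,\rho_0)/(8\pi(1-g)^2)$.

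The main obstacle is not conceptual but computational: cleanly managing the normalization step, especially ensuring that all the $\phi(y)$ (respectively $\sigma(y)$) terms cancel so that the remaining constant is indeed independent of $y$, and recognizing the resulting integrals as $S_{AY}$ and $S_L$. A second small point of care is that the Arakelov statement requires $g\neq 1$ (implicit in the definition $G_{Ar,\rho}$), since otherwise $1/(1-g)$ is ill-defined and the defining equation of $G_{Ar,\rho}$ must be modified.
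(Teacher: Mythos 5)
Your proof is correct, and I checked that both normalization computations do land on the stated constants: for the first identity the $\phi(y)$-terms cancel and $C_{0}=\pi\left(\tfrac{1}{\vol(M,\rho_{0})}\int_{M}\mu_{\rho_{0}}\phi+\tfrac{1}{\vol(M,\rho)}\int_{M}\mu_{\rho}\phi\right)=2\pi S_{AY}(\phi,\rho_{0})$, and for the Arakelov case Gauss--Bonnet together with $\int_{M}\mu_{\rho_{0}}\Delta_{\rho_{0}}\sigma=0$ gives $8\pi(1-g)C_{0}=-\tfrac{1}{1-g}S_{L}(\sigma,\rho_{0})$. Note, however, that the paper does not prove this Proposition at all: it is quoted from Ferrari's work (the cited eq.\ (3.31)), so there is no internal proof to compare against; your argument is essentially the same ``apply the Laplacian to the candidate difference, use symmetry in the two arguments, then fix the constant by the normalization integral'' strategy that the paper does use for the subsequent Proposition relating $G_{Ar,\rho}$ to $G_{\rho}$ via the Ricci potential. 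One small point in your favour: you correctly use $e^{2\sigma}=\tfrac{\vol(M,\rho)}{\vol(M,\rho_{0})}-\tfrac{1}{2}\vol(M,\rho)\Delta_{\rho_{0}}\phi$ (Laplacian of the reference metric), which is the consistent reading of the paper's Equation \eqref{MetriquePhi}; and your caveat that the Arakelov identity requires $g\neq1$ matches the hypothesis in Definition \ref{GreenAr}.
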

We can also compare the Green function and the Arakelov-Green function using the Ricci potential and the Polyakov functional defined below. 
\begin{Definition}[Ricci Potential and Polyakov functional, {\cite[eq. (3.10)-(3.11), (3.46)]{FERRARI-GravitaionMabuchi}}]\label{PotRicci}  
    The \emph{Ricci potential} \(\Psi\) for the metric \(\rho\) is defined by the following two conditions:  
    $$\Delta_{\rho}\Psi(x,\rho)=R_{\rho}(x)-\bar{R}_{\rho} ~~\text{ and }~~\int_{M}\mu_{\rho}(x) \Psi(x,\rho)=0.$$  
    where $R_{\rho}$ is the scalar curvature and $\bar{R}_{\rho}$ is the average scalar curvature.  
The \emph{Polyakov functional} is defined by:  
    $$\Psi_{P}(\rho)=-\frac{1}{8\pi}\int_{M^{2}}\mu_{\rho}^{2}(x,y)R_{\rho}(x)R_{\rho}(y)G_{\rho}(x,y)=\frac{1}{4}\int_{M}\mu_{\rho}(x)R_{\rho}(x)\Psi(x,\rho).$$  
\end{Definition}

\begin{Proposition}[{\cite[eq. (3.45)]{FERRARI-GravitaionMabuchi}}]  \label{Pr: RicciGreen}
    The Ricci potential \(\Psi\) can be written as:  
    $$\Psi(x,\rho)=-\frac{1}{2\pi}\int_{M}\mu_{\rho}(y)R_{\rho}(y)G_{\rho}(x,y).$$  
\end{Proposition}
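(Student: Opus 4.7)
The plan is to verify that the proposed right-hand side satisfies the two defining conditions of the Ricci potential given in Definition \ref{PotRicci}, and then conclude by uniqueness. Set
$$\tilde\Psi(x) = -\frac{1}{2\pi}\int_{M}\mu_{\rho}(y)R_{\rho}(y)G_{\rho}(x,y).$$
I will show that $\Delta_{\rho,x}\tilde\Psi(x)=R_{\rho}(x)-\bar R_{\rho}$ and $\int_{M}\mu_{\rho}(x)\tilde\Psi(x)=0$; since the Ricci potential is the unique solution of this pair of conditions, it follows that $\tilde\Psi=\Psi(\cdot,\rho)$.

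First I would differentiate under the integral sign (a step that can be made rigorous by the standard smoothing/approximation argument or by working distributionally) and invoke the defining equation for $G_\rho$:
$$\Delta_{\rho,x}\tilde\Psi(x)=-\frac{1}{2\pi}\int_{M}\mu_{\rho}(y)R_{\rho}(y)\left(-\frac{2\pi\,\delta(x-y)}{\sqrt{\det\rho}}+\frac{2\pi}{\vol(M,\rho)}\right).$$
The first piece yields $R_\rho(x)$ after using the identity $\mu_\rho(y)=\sqrt{\det\rho}(y)\,d^2y$ to cancel the density in the delta. The second piece yields the constant $\frac{1}{\vol(M,\rho)}\int_M\mu_\rho R_\rho$, which by Gauss--Bonnet equals $\bar R_\rho=8\pi(1-g)/\vol(M,\rho)$. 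Combining gives exactly the desired equation $\Delta_{\rho,x}\tilde\Psi(x)=R_\rho(x)-\bar R_\rho$.

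Next, to check the normalization, I would apply Fubini and use the normalization condition on $G_\rho$ in its first argument:
$$\int_{M}\mu_{\rho}(x)\tilde\Psi(x)=-\frac{1}{2\pi}\int_{M}\mu_{\rho}(y)R_{\rho}(y)\left(\int_{M}\mu_{\rho}(x)G_{\rho}(x,y)\right)=0.$$
Uniqueness of the Ricci potential then completes the identification. The only technical point to watch is the distributional manipulation with $\delta$ against the volume form $\mu_\rho$; the simplest justification is to replace $G_\rho(x,\cdot)$ by its spectral expansion in non-zero eigenfunctions of $\Delta_\rho$, for which both the Laplacian exchange and the Fubini step are immediate, and then to identify the spectral expression with the integral formula against $R_\rho-\bar R_\rho$. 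Beyond this minor bookkeeping there is no real obstacle.
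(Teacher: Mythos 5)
Your verification is correct: applying $\Delta_{\rho,x}$ under the integral and using the defining equation of $G_{\rho}$ gives $R_{\rho}(x)-\bar{R}_{\rho}$, the normalization $\int_{M}\mu_{\rho}(x)G_{\rho}(x,y)=0$ kills the mean, and uniqueness of the solution of the two conditions in Definition \ref{PotRicci} identifies the result with $\Psi(x,\rho)$. The paper itself states this proposition as a quoted result from \cite{FERRARI-GravitaionMabuchi} without proof, but your argument is exactly the standard one and mirrors the strategy the paper does use for the neighbouring proposition comparing $G_{Ar,\rho}$ and $G_{\rho}$ (apply the Laplacian, check the integral normalization, conclude by uniqueness), so nothing further is needed.
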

We can also compute how the Ricci potential and the Polyakov functional change when modifying the metric:
\begin{Proposition}[{\cite[eq. (3.41), (3.46)]{FERRARI-GravitaionMabuchi}}]\label{RicVar}
    We have, with $\rho$ and $\rho_{0}$ as in equations \eqref{MetriqueSigma} and \eqref{MetriquePhi}:  
    $$\Psi(x,\rho)-\Psi(x,\rho_{0})=2\sigma(x)+4\pi(1-g)\left(\phi(x)-S_{AY}(\phi,\rho_{0}) \right)-\frac{1}{2}S_{M}(\sigma,\phi,\rho_{0})$$ 
    and for the Polyakov functional:  
    $$\Psi_{P}(\rho)-\Psi_{P}(\rho_{0})=S_{L}(\sigma,\rho_{0})-2\pi(1-g)S_{M}(\sigma,\phi,\rho_{0}),$$
    where $S_{L}$, $S_{M}$ and $S_{AY}$ are defined in Definition \ref{DefLiouvMabAubYau}.
\end{Proposition}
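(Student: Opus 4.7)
The plan is to handle the two assertions separately. For the Ricci-potential variation, the strategy is to apply $\Delta_{\rho_0}$ to both sides directly from the defining PDE, recognise the right-hand side as the Laplacian of an explicit expression in $\sigma$ and $\phi$, and then pin down the residual constant via the zero-mean normalisation $\int_M \mu_\rho \Psi(\cdot,\rho)=0$. For the Polyakov-functional variation, the idea is to substitute the pointwise formula for $\Psi(\cdot,\rho)$ obtained in the first step into the definition of $\Psi_P$, expand using the conformal transformation rules for $\mu_\rho R_\rho$, and integrate by parts.

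For step one, using $\Delta_\rho=e^{-2\sigma}\Delta_{\rho_0}$ and $R_\rho=e^{-2\sigma}(R_{\rho_0}+2\Delta_{\rho_0}\sigma)$ directly, I compute
$$\Delta_{\rho_0}\bigl(\Psi(\cdot,\rho)-\Psi(\cdot,\rho_0)\bigr)=2\Delta_{\rho_0}\sigma+\bar{R}_{\rho_0}-e^{2\sigma}\bar{R}_\rho.$$
Using Equation~\eqref{MetriquePhi} to express $e^{2\sigma}$, together with $\bar{R}_\rho=8\pi(1-g)/\vol(M,\rho)$, the right-hand side matches $\Delta_{\rho_0}\bigl(2\sigma+4\pi(1-g)\phi\bigr)$ pointwise. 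Hence $\Psi(x,\rho)-\Psi(x,\rho_0)-2\sigma(x)-4\pi(1-g)\phi(x)$ is a constant $C$. Imposing simultaneously $\int_M\mu_\rho\Psi(\cdot,\rho)=0$ and $\int_M\mu_{\rho_0}\Psi(\cdot,\rho_0)=0$, then rewriting the first integral on the $\rho_0$-side via $\mu_\rho=e^{2\sigma}\mu_{\rho_0}$ and using the definition of $\phi$ to split the $e^{2\sigma}$ factor, should identify $C$ as exactly $-4\pi(1-g)S_{AY}(\phi,\rho_0)-\tfrac12 S_M(\sigma,\phi,\rho_0)$.

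For step two, starting from $4\Psi_P(\rho)=\int_M \mu_\rho R_\rho\,\Psi(\cdot,\rho)$, the key simplification is $\mu_\rho R_\rho=\mu_{\rho_0}(R_{\rho_0}+2\Delta_{\rho_0}\sigma)$, which converts everything to an integral against $\mu_{\rho_0}$. I would insert the pointwise expression for $\Psi(\cdot,\rho)$ from step one, and integrate by parts the $\Delta_{\rho_0}\sigma$ factor landing on $\Psi(\cdot,\rho_0)$ to re-use $\Delta_{\rho_0}\Psi(\cdot,\rho_0)=R_{\rho_0}-\bar{R}_{\rho_0}$. One group of terms then collects into $\int_M(\sigma\Delta_{\rho_0}\sigma+R_{\rho_0}\sigma)\mu_{\rho_0}=S_L(\sigma,\rho_0)$, while the remaining $\phi$- and constant-$C$-dependent pieces should reconstitute $-2\pi(1-g)S_M(\sigma,\phi,\rho_0)$.

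The main obstacle is purely bookkeeping: there is no conceptual trick, but recognising the exact combination $-2\pi(1-g)\phi\Delta_{\rho_0}\phi+(\bar{R}_{\rho_0}-R_{\rho_0})\phi+4\sigma e^{2\sigma}/\vol(M,\rho)$ defining $S_M$ inside the sum of integrals produced by step two takes care, because the $4\sigma e^{2\sigma}/\vol(M,\rho)$ summand only materialises after re-expressing an integral of $\sigma$ against $\mu_\rho$ through $e^{2\sigma}\mu_{\rho_0}$, and it mixes with the contribution of the constant $C$ inherited from step one. A useful consistency check at the end is that both variation formulae must satisfy the cocycle property under $\rho\to\rho'\to\rho_0$ inherited from those of $S_L$, $S_M$ and $S_{AY}$ listed in Section~\ref{PotRicci}'s surrounding material; this provides a clean algebraic cross-check before concluding.
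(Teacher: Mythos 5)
The paper itself gives no proof of this Proposition: it is imported verbatim by citation from Ferrari's work (\cite{FERRARI-GravitaionMabuchi}, eqs.\ (3.41) and (3.46)), so there is no internal argument to compare with. Your blind derivation is a correct, self-contained route, and the steps you leave as ``should work'' do close up. Concretely: applying $\Delta_{\rho_0}$ and using $e^{2\sigma}R_\rho=R_{\rho_0}+2\Delta_{\rho_0}\sigma$ together with $e^{2\sigma}\bar R_\rho=\bar R_{\rho_0}-4\pi(1-g)\Delta_{\rho_0}\phi$ (note that the relation defining $e^{2\sigma}$ must be read with $\Delta_{\rho_0}\phi$, i.e.\ $i\partial\bar\partial\phi=-\tfrac12\Delta_{\rho_0}\phi\,\mu_{\rho_0}$, despite the paper's typographical $\Delta_\rho\phi$) gives $\Delta_{\rho_0}\bigl(\Psi(\cdot,\rho)-\Psi(\cdot,\rho_0)\bigr)=\Delta_{\rho_0}\bigl(2\sigma+4\pi(1-g)\phi\bigr)$, and the constant fixed by $\int_M\mu_\rho\Psi(\cdot,\rho)=0$ (using $\int\mu_\rho\Psi(\cdot,\rho_0)=-\tfrac{\vol(M,\rho)}{2}\int\mu_{\rho_0}\phi\,(R_{\rho_0}-\bar R_{\rho_0})$ after integrating by parts) is exactly $-4\pi(1-g)S_{AY}(\phi,\rho_0)-\tfrac12 S_M(\sigma,\phi,\rho_0)$. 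For the Polyakov part, $\mu_\rho R_\rho=\mu_{\rho_0}(R_{\rho_0}+2\Delta_{\rho_0}\sigma)$ plus one integration by parts produces $S_L(\sigma,\rho_0)$, and the leftover terms reassemble into $-2\pi(1-g)S_M(\sigma,\phi,\rho_0)$ precisely because of the identity $\tfrac{1}{\vol(M,\rho)}\int_M\mu_\rho\,\sigma=\tfrac{1}{\vol(M,\rho_0)}\int_M\mu_{\rho_0}\sigma-\tfrac12\int_M\mu_{\rho_0}\,\sigma\,\Delta_{\rho_0}\phi$, which is the nontrivial cancellation your ``bookkeeping'' warning points at; I checked it and the two sides match exactly, also in the trivial genus-one case $g=1$. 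So your approach is sound and would serve as a complete proof if written out; the only cosmetic slip is that the cocycle properties you invoke as a cross-check are listed after Definition \ref{DefLiouvMabAubYau}, not near Definition \ref{PotRicci}.
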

    We can observe that, if $R_{\rho}$ is constant, then $\Psi(x,\rho)=0$ and $\Psi_{P}(\rho)=0.$
    
Finally, we have the relation between the Green function and the Arakelov-Green function. 
\begin{Proposition}
    If $g\neq1$, we have the following equality: 
    $$G_{Ar,\rho}(x,y)=G_{\rho}(x,y)+\frac{\Psi(x,\rho)+\Psi(y,\rho)}{4(1-g)}-\frac{\Psi_{P}(\rho)}{8\pi(1-g)^{2}}.$$
\end{Proposition}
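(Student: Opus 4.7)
The plan is to set $H(x,y):=G_{Ar,\rho}(x,y)-G_{\rho}(x,y)$ and determine $H$ from the defining properties of both Green functions. Applying $\Delta_{\rho,x}$ and subtracting, the $\delta$-singularities cancel, leaving
$$\Delta_{\rho,x}H(x,y)=\frac{R_{\rho}(x)}{4(1-g)}-\frac{2\pi}{\vol(M,\rho)}.$$
Since $\bar R_{\rho}=\frac{8\pi(1-g)}{\vol(M,\rho)}$, the constant term is exactly $\bar R_{\rho}/(4(1-g))$, so the right-hand side equals $(R_{\rho}(x)-\bar R_{\rho})/(4(1-g))=\Delta_{\rho,x}\Psi(x,\rho)/(4(1-g))$ by Definition \ref{PotRicci}. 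Hence $H(x,y)-\Psi(x,\rho)/(4(1-g))$ is harmonic in $x$, therefore constant in $x$, say equal to $C(y)$.

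Next I would invoke the symmetry $H(x,y)=H(y,x)$, which holds because both $G_{\rho}$ and $G_{Ar,\rho}$ are symmetric in their arguments (this follows from their PDEs together with the normalization conditions, using the self-adjointness of $\Delta_{\rho}$). Symmetry forces $C(y)=\Psi(y,\rho)/(4(1-g))+c$ for an honest constant $c\in\mathbb{R}$, so
$$H(x,y)=\frac{\Psi(x,\rho)+\Psi(y,\rho)}{4(1-g)}+c.$$

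To pin down $c$, I would impose the normalization $\int_{M}\mu_{\rho}(x)R_{\rho}(x)G_{Ar,\rho}(x,y)=0$. Three ingredients make the integral computable: Proposition \ref{Pr: RicciGreen} gives $\int_{M}\mu_{\rho}(x)R_{\rho}(x)G_{\rho}(x,y)=-2\pi\Psi(y,\rho)$; the definition of the Polyakov functional gives $\int_{M}\mu_{\rho}R_{\rho}\Psi=4\Psi_{P}(\rho)$; and Gauss--Bonnet gives $\int_{M}\mu_{\rho}R_{\rho}=8\pi(1-g)$. Plugging these in, the $\Psi(y,\rho)$ contributions cancel and one is left with a linear equation for $c$ that yields $c=-\Psi_{P}(\rho)/(8\pi(1-g)^{2})$, giving the stated identity.

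The proof is essentially routine once the setup is right; the only genuine subtlety is the symmetry step, where I must be careful that the constant produced by the harmonic reduction is a true constant (and not merely independent of $x$) before the normalization is applied. The assumption $g\neq 1$ is used crucially both to divide by $1-g$ and to guarantee that $\bar R_{\rho}\neq 0$, which is what makes the Arakelov-Green normalization against $R_{\rho}$ non-degenerate.
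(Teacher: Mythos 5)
Your proof is correct, and it is essentially the paper's argument run in the opposite direction. The paper takes the stated right-hand side as a candidate, computes its Laplacian in $x$ exactly as you do (using $\bar{R}_{\rho}/(4(1-g))=2\pi/\vol(M,\rho)$ so that only the delta term and $R_{\rho}(x)/(4(1-g))$ survive), then checks that its integral against $R_{\rho}\mu_{\rho}$ vanishes via the same ingredients you list (Proposition \ref{Pr: RicciGreen}, the definition of $\Psi_{P}$, Gauss--Bonnet), and concludes by the uniqueness clause in Definition \ref{GreenAr} — which only involves the first argument, so no symmetry of $G_{Ar,\rho}$ is ever needed. Your derivational route, starting from $H=G_{Ar,\rho}-G_{\rho}$, instead requires the symmetry of $G_{Ar,\rho}$ as an extra lemma; your sketch of it is sound (Green's identity applied to $u=G_{Ar,\rho}(\cdot,x)$ and $v=G_{Ar,\rho}(\cdot,y)$ works precisely because the normalization is taken against $R_{\rho}\mu_{\rho}$ in the first variable and $\int_{M}\mu_{\rho}R_{\rho}=8\pi(1-g)\neq0$ for $g\neq1$), and the harmonicity step is fine since the logarithmic singularities cancel in $H$, making $H(\cdot,y)$ smooth by elliptic regularity. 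The computation fixing $c$ is identical in content to the paper's normalization check. What your route buys is that the formula is derived rather than verified — it shows the identity is forced; what the paper's route buys is brevity, since verification plus uniqueness dispenses with both the symmetry and the harmonic-implies-constant steps.
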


\begin{proof}
    We compute: \begin{align*}
        \Delta_{\rho,x}\left(G_{\rho}(x,y)+\frac{\Psi(x,\rho)+\Psi(y,\rho)}{4(1-g)}-\frac{\Psi_{P}(\rho)}{8\pi(1-g)^{2}}\right)&=\Delta_{\rho,x}G_{\rho}(x,y)+\frac{1}{4(1-g)}\Delta_{\rho,x}\Psi(x,\rho)\\&=-\frac{2\pi\delta(x-y)}{\sqrt{\det\rho}}+\frac{2\pi}{\vol(M,\rho)}+\frac{R_{\rho}(x)}{4(1-g)}-\frac{\bar{R}_{\rho}}{4(1-g)}\\&=-\frac{2\pi\delta(x-y)}{\sqrt{\det\rho}}+\frac{R_{\rho}(x)}{4(1-g)}.
    \end{align*}
    From the Definition of the Polyakov functional, we can then easily see that the integral with the scalar curvature of the right hand side in the Proposition is $0$, thus proving the result.
\end{proof}

Consequently, we also have for $g \neq 1$: 
\begin{equation}\label{Eq:GarRho->GRho0}
    G_{Ar,\rho}(x,y)=G_{\rho_{0}}(x,y)+\frac{2\sigma(x)+\Psi(x,\rho_{0})+2\sigma(y)+\Psi(y,\rho_{0})}{4(1-g)}-\frac{\Psi_{P}(\rho_{0})+S_{L}(\sigma,\rho_{0})}{8\pi(1-g)^{2}}.
\end{equation}

\subsection{Coulomb gas}
The Coulomb gas we will be interested in will be defined to be a system of $N$ interacting particles on a compact Riemann surface $M$ whose interactions will be given by a Green function and with a potential $-\frac{V}{2}.$ More precisely, the system of $N$ particles follows the Hamiltonian:  
$$H_{\rho,N}(V)=-\sum_{1\leq i <j \leq N} G_{\rho}(z_{i},z_{j})-\frac{N+g-1}{2}\sum_{i=1}^{N} V(z_{i}).$$ The constant $N+g-1$ will be denoted by $k$ and will correspond to the degree of some line bundle over $M$. This choice is particularly convenient but not strictly necessary. The potential $-V$ that we will consider may depend on $k$ and should satisfy uniformly: $$V(x)=V_{0}(x)+O\left(\frac{1}{k}\right)$$ when $k\to +\infty$. The potential $V$ will be supposed smooth.
The partition function will then be defined as follow: 

\begin{Definition}\label{DefFoncPart}
    We define the $N$-particles \emph{partition function} for the metrics $\tilde{\rho}$ and $\rho$ for the Coulomb gas by $$\mathcal{Z}_{\rho,\tilde{\rho},N}(V)=\frac{1}{N!}\int_{M^{N}}\mu_{\tilde{\rho}}^{N}\exp\left(2\sum_{1\leq i < j \leq N} G_{\rho}(p_{i},p_{j}) +k\sum_{i=1}^{N} V(p_{i})\right) $$
with $k=N+g-1.$
\end{Definition}

\begin{Remark} 
    We will note $$\mathcal{Z}_{\rho,N}(V)=\mathcal{Z}_{\rho,\rho,N}(V)$$ and, we can observe that $$\mathcal{Z}_{\rho_{1},\rho_{0},N}(V)=\mathcal{Z}_{\tilde{\rho}_{1},\tilde{\rho}_{0},N}(\tilde{V})$$ with $$\tilde{V}=V-\frac{2\tilde{\sigma}_{0}}{k}-2\pi \left(1-\frac{g}{k} \right)\left(S_{AY}(\tilde{\phi}_{1},\rho_{1})-\tilde{\phi}_{1} \right)$$ if $\tilde{\rho}_{0}$ and $\rho_{0}$, and $\tilde{\rho}_{1}$ and $\rho_{1}$ are related by  $$\tilde{\rho}_{0}=e^{2\tilde{\sigma}_{0}}\rho_{0}~~ \text{ and } ~~ \mu_{\tilde{\rho}_{1}}=\frac{\vol(M,\tilde{\rho}_{1})}{\vol(M,\rho_{1})}\mu_{\rho_{1}}+i\vol(M,\tilde{\rho}_{1})\partial\bar{\partial}\tilde{\phi_{1}}$$ as in Equations \eqref{MetriqueSigma} and \eqref{MetriquePhi}.
\end{Remark}
We could also define the partition function for the Coulomb gas using the Arakelov-Green function, but this would be equivalent to just changing the potential $V$ using Equation \eqref{Eq:GarRho->GRho0}, therefore we will perform the computation with the usual Green function.
The aim of this article will be to find an asymptotic expansion of $\ln\mathcal{Z}_{\rho,\tilde{\rho},N}(V)$ (see Theorem \ref{ThéorèmeFinalSansTheta}).
\section{Determinants of the Laplacians}\label{SectionDeterminantOftheLaplacians}
Considering $M$ a compact Riemann surface of genus $g$, endowed with a metric $\rho$ we can denote $\Delta_{\rho}$ as before to be the positive scalar Laplacian. The spectrum of $\Delta_{\rho}$ is positive and discrete, and we can then do a regularization using zeta functions to define its determinant that we will note $\det\Delta_{\rho}$ 
%(see for instance \cite{OSGOOD1988148})
. If we consider two metrics as in Equation \eqref{MetriqueSigma}: 
\begin{Proposition}[{\cite[Equation (1.13)]{OSGOOD1988148}}]\label{VarDetSca}
    The determinant of the scalar Laplace operator satisfies  
    \begin{align*}
        \ln\det\Delta_{\rho}
        &=-\frac{S_{L}(\sigma,\rho_{0})}{12\pi}+\ln\bigg(\frac{\vol(M,\rho)}{\vol(M,\rho_{0})}\bigg)+\ln\det\Delta_{\rho_{0}},
         \end{align*}
         with $S_{L}$ the Liouville functional (see Definition \ref{DefLiouvMabAubYau}).
\end{Proposition}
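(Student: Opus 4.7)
The plan is to derive the formula by integrating the Polyakov conformal-anomaly variation along a one-parameter family. Connect $\rho_0$ and $\rho$ by $\rho_t = e^{2t\sigma}\rho_0$ for $t\in[0,1]$, so that $\dot\Delta_t = -2\sigma\Delta_t$. The strategy is to compute $\frac{d}{dt}\ln\det\Delta_{\rho_t}$ explicitly in terms of local geometric data, then integrate from $t=0$ to $t=1$ and recognise the two pieces $\frac{1}{12\pi}S_L(\sigma,\rho_0)$ and $\ln\frac{\vol(M,\rho)}{\vol(M,\rho_0)}$.

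The main computation is the variation of the zeta-regularised determinant. Starting from $\ln\det\Delta_{\rho_t} = -\zeta_t'(0)$ with $\zeta_t(s) = \Tr((\Delta_t|_{\ker^{\perp}})^{-s})$, I would use the Mellin representation
$$\zeta_t(s) = \frac{1}{\Gamma(s)}\int_0^\infty u^{s-1}\bigl(\Tr(e^{-u\Delta_t}) - 1\bigr)\,du,$$
differentiate in $t$, and integrate by parts in $u$ to rewrite $\frac{d}{dt}\zeta_t(s)$ in terms of the weighted heat trace $\Tr(\sigma e^{-u\Delta_t})$. Taking the $s$-derivative at $s=0$, the local piece is extracted from the Seeley--DeWitt expansion: in two dimensions the only surviving heat coefficient is proportional to $\int_M \sigma R_{\rho_t}\mu_{\rho_t}$, while the large-$u$ projection onto the kernel (the constants) contributes $\frac{2}{\vol(M,\rho_t)}\int_M \sigma\,\mu_{\rho_t}$. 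After fixing the numerical constants one obtains the classical Polyakov variation
$$\frac{d}{dt}\ln\det\Delta_{\rho_t} = -\frac{1}{12\pi}\int_M \sigma R_{\rho_t}\mu_{\rho_t} + \frac{2}{\vol(M,\rho_t)}\int_M \sigma\,\mu_{\rho_t}.$$

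It remains to recognise the antiderivative. From the conformal-change formulas of Section 2, $\mu_{\rho_t} = e^{2t\sigma}\mu_{\rho_0}$ and $R_{\rho_t}\mu_{\rho_t} = (R_{\rho_0} + 2t\Delta_{\rho_0}\sigma)\mu_{\rho_0}$, so
$$\int_0^1\!\! \int_M \sigma R_{\rho_t}\mu_{\rho_t}\,dt = \int_M \sigma R_{\rho_0}\mu_{\rho_0} + \int_M \sigma\Delta_{\rho_0}\sigma\,\mu_{\rho_0} = S_L(\sigma,\rho_0),$$
and $\int_0^1 \frac{2}{\vol(M,\rho_t)}\int_M \sigma\,\mu_{\rho_t}\,dt = \int_0^1 \frac{d}{dt}\ln\vol(M,\rho_t)\,dt = \ln\frac{\vol(M,\rho)}{\vol(M,\rho_0)}$. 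Integrating the Polyakov variation from $0$ to $1$ gives exactly the stated identity.

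The main technical obstacle is the careful bookkeeping of the zero mode of $\Delta_{\rho_t}$ in the zeta regularisation. The kernel must be projected out before the Mellin integral converges at large $u$, and the $t$-derivative of $\zeta_t'(0)$ splits into a local heat-coefficient piece and a ``kernel correction'' coming from the long-time asymptotics of the weighted heat trace. It is precisely this kernel correction that produces the $\ln\vol$ term in the final identity; omitting it would make the formula fail the simplest scaling check $\sigma \equiv \text{const}$, where the contribution of $S_L$ alone would not reproduce the scaling of $\zeta_\rho(0) = \frac{\chi(M)}{6} - 1$ dictated by $\Delta_{c\rho} = c^{-1}\Delta_\rho$.
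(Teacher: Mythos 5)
Your derivation is correct: the variation formula $\frac{d}{dt}\ln\det\Delta_{\rho_t}=-\frac{1}{12\pi}\int_M\sigma R_{\rho_t}\mu_{\rho_t}+\frac{2}{\vol(M,\rho_t)}\int_M\sigma\mu_{\rho_t}$ has the right constants and zero-mode treatment (it passes the constant-rescaling check against Equation \eqref{VarDetLapScaCste}), and the two $t$-integrations do yield $S_L(\sigma,\rho_0)$ and $\ln\frac{\vol(M,\rho)}{\vol(M,\rho_0)}$. The paper gives no proof of this Proposition, citing Osgood--Phillips--Sarnak instead, and your heat-kernel/Polyakov-anomaly argument is essentially the standard derivation underlying that cited formula, so there is nothing to compare beyond that.
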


In particular, if $\sigma$ is constant, we have $\rho=\alpha \rho_{0}$, with $\alpha>0$ and: 
\begin{equation}\label{VarDetLapScaCste}
    \ln\det \Delta_{\alpha \rho_{0}}=\left(1-\frac{\chi(M)}{6} \right)\ln \alpha+\ln\det\Delta_{\rho_{0}}
\end{equation}
where $\chi(M)=2(1-g)$ is the Euler-characteristic of $M.$

We will now consider $L \to M$ to be a Hermitian line bundle over $M$ with Hermitian metric $h.$ In the following we will denote by $H^{0}(M,L)$ the set of holomorphic sections of this bundle and $(\omega_{i})_{i}$ a basis of holomorphic sections. The magnetic Laplacian will be defined to be $\square_{L}=2\bar\partial^{\dagger}_{L}\bar{\partial}_{L}.$ This determinant, that we will call the magnetic Laplacian is often called the Kodaira Laplacian or the Dolbeault Laplacian.
Then, we can define the determinant of the magnetic Laplacian $\square_{L,\rho,h}$. This is related to the Ray-Singer analytic torsion \cite{RAY1971145} and the Quillen metric \cite{Quillen1985DeterminantsOC,BismutGilletSouleQuillenMetric}, see \cite{HolomorphicMorseMaMarinescu} for review. Indeed, if $T$ is the analytic torsion as in \cite{FINSKI20183457} or \cite{HolomorphicMorseMaMarinescu}, then we have $\ln\det\left(\frac{1}{2}\square_{L,\rho,h}\right)=2\ln T(\rho,h)$. Modifying the metrics as in Equation \eqref{MetriqueSigma} and \eqref{MetriqueHermPsi}, we have formulas for the variation of $\ln \det \square_{L,\rho,h}.$ First, we have to introduce some functionals: 

\begin{Definition}\label{DefFonctionnelleS}
    We define: $$S_{1}(\sigma,\psi,\rho_{0},h_{0})=\frac{1}{2\pi}\int_{M}\mu_{\rho_{0}}\left(-\frac{1}{2}\psi R_{\rho_{0}} +2\sigma\left(\frac{B(\rho_{0},h_{0})}{k}-\frac{1}{2}\Delta_{\rho_{0}}\psi \right) \right)$$
    and $$S_{2}(\psi,\rho_{0},h_{0})=\frac{1}{2\pi}\int_{M}\mu_{\rho_{0}}\left(-\frac{1}{4}\psi\Delta_{\rho_{0}}\psi+\frac{B(\rho_{0},h_{0})}{k}\psi\right).$$
    We will often omit some arguments in these functionals when the context is clear.
\end{Definition}
\begin{Remark}
    Like for the previous functionals, we have cocycle properties: \begin{align*}
        S_{1}(\sigma,\psi,\rho_{0},h_{0})&=-S_{1}(-\sigma,-\psi,\rho_{0}e^{2\sigma},h_{0}e^{-k\psi}) & S_{2}(\psi,\rho_{0},h_{0})&=-S_{2}(-\psi,\rho_{0},h_{0}e^{-k\psi}).
    \end{align*}
    Moreover, if $c \in \mathbb{R}$ is a constant real number, then: \begin{align*}
        S_{1}(\sigma,\psi+c,\rho_{0},h_{0})&=S_{1}(\sigma,\psi,\rho_{0},h_{0})+2(g-1)c  
        \end{align*}
        and \begin{align*}S_{2}(\psi+c,\rho_{0},h_{0})&=S_{2}(\psi,\rho_{0},h_{0})+\frac{c}{2\pi k}\int_{M}\mu_{\rho_{0}}B(\rho_{0},h_{0})=S_{2}(\psi,\rho_{0},h_{0})+c.
    \end{align*}
\end{Remark}

An important point for our computations is knowing how the different quantities that will appear transform when we change the different metrics. For this reason we recall some results that can be found in \cite{KlevtsovQuillen}.

\begin{Proposition}[{\cite[Th. 4]{KlevtsovQuillen}}]\label{VarDetFi}
    The following exact equality holds for metrics as in Equations \eqref{MetriqueSigma} and \eqref{MetriqueHermPsi} \begin{align*}
    \ln\frac{\det\langle \omega_{i},\omega_{j}\rangle_{\rho,h}}{\det\square_{L,\rho,h}}-\ln\frac{\det\langle \omega_{i},\omega_{j}\rangle_{\rho_{0},h_{0}}}{\det\square_{L,\rho_{0},h_{0}}}&=-k^{2}S_{2}(\psi,\rho_{0},h_{0})+\frac{k}{2}\left(\ln \left(\frac{\vol(M,\rho_{0})}{\vol(M,\rho)} \right)+S_{1}(\sigma,\psi,\rho_{0},h_{0})  \right)\\& +\frac{1-g}{3}\ln \left(\frac{\vol(M,\rho_{0})}{\vol(M,\rho)} \right)+\frac{1}{12\pi}S_{L}(\sigma,\rho_{0}).
    \end{align*}
\end{Proposition}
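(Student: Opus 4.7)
The plan is to view Proposition \ref{VarDetFi} as a two-step anomaly computation, derivable either directly via heat-kernel variational formulas for zeta-regularized determinants, or more conceptually via the Bismut--Gillet--Soul\'e anomaly formula for the Quillen metric on the determinant line bundle $\det H^{0}(M,L)$. In either approach, I would interpolate along the one-parameter family $(\rho_{t},h_{t})=(e^{2t\sigma}\rho_{0},e^{-kt\psi}h_{0})$ for $t\in[0,1]$. Since the combination $\det\square_{L,\rho_{t},h_{t}}/\det\langle\omega_{i},\omega_{j}\rangle_{\rho_{t},h_{t}}$ is (up to normalization) the square of the Ray--Singer analytic torsion, continuity in $t$ reduces the identity to computing $\tfrac{d}{dt}$ of its logarithm and integrating from $0$ to $1$.

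For the direct heat-kernel route, the standard variational formula gives
$$\frac{d}{dt}\ln\det\square_{L,\rho_{t},h_{t}}=-\Tr^{*}\bigl[\dot{\square}_{L,\rho_{t},h_{t}}\,\square_{L,\rho_{t},h_{t}}^{-1}\bigr]+(\text{zero-mode correction}),$$
where the regularized trace is evaluated through the short-time heat kernel expansion on a Riemann surface, and the zero-mode correction cancels precisely against the variation of $\det\langle\omega_{i},\omega_{j}\rangle_{\rho_{t},h_{t}}$. Using that $\dot{\square}$ decomposes into a conformal rescaling of $\bar\partial^{\dagger}_{L}$ (proportional to $\dot\sigma$) and a Hermitian rescaling on $L$ (proportional to $k\dot\psi$), the local integrand is polynomial in $\dot\sigma,\dot\psi,R_{\rho_{t}},B(\rho_{t},h_{t})$. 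I would then organize by powers of $k$: the $k^{2}$ piece, coming from $\psi\,\Delta_{\rho_{0}}\psi$, integrates to $-k^{2}S_{2}(\psi,\rho_{0},h_{0})$; the order-$k$ piece, coming from the cross-couplings between $\sigma$ and $\psi$ and the volume change, assembles into $\tfrac{k}{2}\bigl(\ln(\vol(M,\rho_{0})/\vol(M,\rho))+S_{1}(\sigma,\psi,\rho_{0},h_{0})\bigr)$ after invoking Gauss--Bonnet and the definition of $S_{1}$; the order-$k^{0}$ piece is pure scalar-Laplacian anomaly, producing $\tfrac{1}{12\pi}S_{L}(\sigma,\rho_{0})$ via Polyakov--Alvarez (Proposition \ref{VarDetSca}) together with the topological coefficient $\tfrac{1-g}{3}\ln(\vol(M,\rho_{0})/\vol(M,\rho))$ coming from the Euler characteristic.

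Equivalently, one can extract all three contributions at once from the Bismut--Gillet--Soul\'e anomaly formula: the variation of the Quillen norm is local and equals the Bott--Chern secondary form of $\mathrm{Td}(TM)\,\mathrm{ch}(L)$. On a Riemann surface only the degree-two component contributes, and it splits into $\tfrac12 c_{1}(L)^{2}$ (yielding the $S_{2}$ term), $\tfrac12 c_{1}(L)\,c_{1}(TM)$ (yielding the $S_{1}$ and half the volume term), and $\tfrac{1}{12}c_{1}(TM)^{2}$ (yielding both the topological $\tfrac{1-g}{3}\ln$-volume piece and the Liouville piece $S_{L}/(12\pi)$). Evaluating the secondary Bott--Chern forms explicitly along the path $(\sigma_{t},\psi_{t})$ then reproduces the functionals $S_{L},S_{1},S_{2}$.

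The main obstacle is not conceptual but bookkeeping: tracking the factors of $2\pi$ between curvature two-forms and Chern classes, correctly isolating the zero-mode contribution to $\zeta'(0)$ so that it matches $\ln\det\langle\omega_{i},\omega_{j}\rangle$ rather than being double-counted, and using the cocycle properties of $S_{1},S_{2},S_{L}$ recorded earlier to reduce the path integration to the stated endpoint expression. Once these normalizations and cancellations are fixed, the formula falls out directly; this is essentially the content of \cite[Th.~4]{KlevtsovQuillen}.
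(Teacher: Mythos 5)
The paper does not actually prove this Proposition: it is imported verbatim from \cite[Th.~4]{KlevtsovQuillen}, with only a Remark explaining that the two $\ln\left(\frac{\vol(M,\rho_{0})}{\vol(M,\rho)}\right)$ terms are restored because that reference normalizes both volumes to $2\pi$. Your plan reconstructs essentially the proof of that cited theorem rather than anything new in this paper: interpreting $\det\square_{L,\rho,h}/\det\langle\omega_{i},\omega_{j}\rangle_{\rho,h}$ as (the inverse of) the Quillen norm of a fixed holomorphic frame of $\det H^{0}(M,L)$, whose variation along $(\rho_{t},h_{t})=(e^{2t\sigma}\rho_{0},e^{-kt\psi}h_{0})$ is local and exact by the anomaly formula, and whose degree decomposition $\tfrac12 c_{1}(L)^{2}$, $\tfrac12 c_{1}(L)c_{1}(TM)$, $\tfrac1{12}c_{1}(TM)^{2}$ produces $-k^{2}S_{2}$, the $\tfrac{k}{2}(S_{1}+\ln\vol$-ratio$)$ block, and $\tfrac{1}{12\pi}S_{L}$ plus the topological volume term. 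This is the right mechanism, and in particular it correctly explains why the identity is exact in $k$ (locality of the anomaly, no zero-mode double counting) in contrast with the asymptotic Proposition \ref{FormuleVarLapMag}.

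Two caveats you should make explicit if you carry this out. First, the identification of the zero-mode correction with $\ln\det\langle\omega_{i},\omega_{j}\rangle$ requires $H^{1}(M,L)=0$, i.e.\ $\deg L=k=N+g-1>2g-2$, so that the kernel of $\square_{L}$ is exactly $H^{0}(M,L)$ and the determinant line has no $H^{1}$ factor; the Proposition is used in that regime, but your sketch never states this. Second, the volume-ratio terms are not automatic from the Bott--Chern computation with normalized volumes: you must track the overall scale of the $L^{2}$ inner product and of $\det\square$, consistently with the exact scaling law of Proposition \ref{ConstantLapMag} ($\det\square_{\alpha\rho,h}=\alpha^{\frac k2+\frac23(1-g)}\det\square_{\rho,h}$), which is precisely what produces the coefficients $\tfrac k2$ and $\tfrac{1-g}{3}$ in front of $\ln\left(\frac{\vol(M,\rho_{0})}{\vol(M,\rho)}\right)$. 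As written, your proposal asserts rather than performs this bookkeeping, so it is a sound plan matching the cited source's route, not yet a complete verification of the constants.
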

\begin{Remark}In the previous proposition we added the term $\ln \left(\frac{\vol(M,\rho_{0})}{\vol(M,\rho)} \right)$, not present in \cite{KlevtsovQuillen}, where it was set $\vol(M,\rho)=\vol(M,\rho_{0})=2\pi$. 
\end{Remark}

Now, we want to know the transformation of $\ln\det\square_{L,\rho,h}$ when changing $\rho$ and $h$ as in Equations \eqref{MetriqueSigma} and \eqref{MetriqueHermPsi}. The terms $\mathcal{F}(\rho,h)$ defined below will appear.
\begin{Definition}
    We define for $B(\rho,h)$ a positive magnetic field
    $$\mathcal{F}(\rho,h)=\frac{1}{2\pi}\int_{M}\mu_{\rho} \left( \frac{B(\rho,h)}{2}\ln \frac{B(\rho,h)}{2\pi}+\frac{R_{\rho}}{6}\ln \frac{B(\rho,h)}{2\pi}+\frac{\ln B(\rho,h)}{24}\Delta_{\rho} \ln B(\rho,h )\right).$$
\end{Definition}

Consequently, for metrics as in Equations \eqref{MetriqueSigma} and \eqref{MetriqueHermPsi} we have the following result:
\begin{Proposition}[{\cite[Th. 4]{KlevtsovQuillen}}]\label{FormuleVarLapMag}
     We have for $B(\rho,h)$ and $B(\rho_{0},h_{0})$ positive: $$\ln \det \square_{L,\rho,h}=\ln \det \square_{L,\rho_{0},h_{0}}+\mathcal{F}\left(\rho_{0},h_{0} \right)-\mathcal{F}\left(\rho,h\right)+O\left(\frac{1}{k} \right).$$
\end{Proposition}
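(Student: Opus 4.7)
The strategy is to combine the exact conformal identity of Proposition \ref{VarDetFi} with the large-$k$ asymptotic expansion of the log-Gram determinant $\ln\det\langle \omega_i,\omega_j\rangle_{\rho,h}$. Since $L=L_0^k\otimes E$ with $L_0$ of degree one, the Hermitian bundle whose Dolbeault Laplacian we study is essentially a high tensor power of a fixed positive line bundle, which puts us in the setting of Tian--Yau--Zelditch--Catlin (TYZC) Bergman kernel asymptotics (see \cite{HolomorphicMorseMaMarinescu}). First, rearrange Proposition \ref{VarDetFi} as
\[
\ln\det\square_{L,\rho,h} - \ln\det\square_{L,\rho_0,h_0} = \bigl(\ln\det\langle\omega_i,\omega_j\rangle_{\rho,h} - \ln\det\langle\omega_i,\omega_j\rangle_{\rho_0,h_0}\bigr) - R_k,
\]
where $R_k$ denotes the polynomial-in-$k$ right-hand side of Proposition \ref{VarDetFi}. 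The problem is thus reduced to an asymptotic expansion of the variation of the log-Gram determinant.

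To obtain this expansion, I would differentiate $\ln\det\langle\omega_i,\omega_j\rangle$ along a smooth family of metrics interpolating between $(\rho_0,h_0)$ and $(\rho,h)$: the derivative equals $\operatorname{tr}(G^{-1}\dot G)$, which is a local integral over $M$ of the diagonal Bergman kernel at degree $k$ weighted by the infinitesimal variations of $\mu_\rho$ and of $h$. Inserting the TYZC expansion of the diagonal Bergman kernel in inverse powers of $k$ and integrating along the family yields an asymptotic expansion of the form $k^2\alpha_2 + k\alpha_1 + \alpha_0 + O(1/k)$, with each coefficient $\alpha_j$ being an explicit local integral over $M$ of $B(\rho,h)$, $R_\rho$ and the conformal and Hermitian factors $\sigma,\psi$. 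Path-independence along the family is automatic because the TYZC coefficients are intrinsic local densities.

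Matching this expansion term by term against $R_k$, the polynomial pieces $k^2\alpha_2$ and $k\alpha_1$ should cancel the $-k^2 S_2$ and $\tfrac{k}{2}(\ln(\vol(M,\rho_0)/\vol(M,\rho))+S_1)$ terms of Proposition \ref{VarDetFi}, and what remains must be identified with $\mathcal{F}(\rho_0,h_0)-\mathcal{F}(\rho,h)$ modulo $O(1/k)$. The main obstacle is precisely this last identification: it requires repeated integration by parts against the defining relation $B(\rho,h)=\tfrac12\Delta_\rho\ln h$ and use of Gauss--Bonnet for $R_\rho$ in order to recognize the three pieces $\tfrac{B}{2}\ln(B/2\pi)$, $\tfrac{R_\rho}{6}\ln(B/2\pi)$ and $\tfrac{\ln B}{24}\Delta_\rho\ln B$ defining $\mathcal{F}$, with careful bookkeeping of signs and factors of $2\pi$. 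A useful sanity check at this stage is to infinitesimally differentiate both sides of the target formula in $\sigma$ and $\psi$ and verify agreement with the direct infinitesimal anomaly coming from Proposition \ref{VarDetFi}; this locally confirms the matching before the full integration along the family is performed.
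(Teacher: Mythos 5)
The paper does not prove this Proposition at all: it is imported verbatim from \cite{KlevtsovQuillen} (Theorem 4 there), so there is no internal argument to compare against. Your strategy --- split off the exact conformal/Hermitian anomaly of Proposition \ref{VarDetFi} and reduce everything to the large-$k$ expansion of the log-Gram determinant via Bergman kernel asymptotics, obtained by differentiating $\ln\det\langle\omega_i,\omega_j\rangle$ along a family of metrics and inserting the on-diagonal expansion --- is indeed the standard route and is essentially how the cited source proceeds. So the outline is sound.

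As a proof, however, it has a genuine gap: the decisive step, namely that the integrated Bergman expansion minus the polynomial anomaly terms equals $\mathcal{F}(\rho_{0},h_{0})-\mathcal{F}(\rho,h)+O(1/k)$, is precisely the content of the Proposition, and you only announce it as ``the main obstacle'' without carrying it out. Moreover your bookkeeping of orders is off in a way that matters for that identification. Since $\int_{M}\mu_{\rho}\,B(\rho,h)/2\pi=k$ is metric-independent, only the $k\ln k$ and $\ln k$ parts of $\mathcal{F}$ cancel in the difference $\mathcal{F}(\rho_{0},h_{0})-\mathcal{F}(\rho,h)$; a genuine order-$k$ term of the type $\tfrac{k}{2}\int\mu\, b\ln b$ survives (it is visible in Theorem \ref{DifférenceLaplacienMagPotentiel}). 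Hence the order-$k$ coefficient coming from the Bergman expansion cannot simply cancel $\tfrac{k}{2}\bigl(\ln(\vol(M,\rho_{0})/\vol(M,\rho))+S_{1}\bigr)$ as you claim: part of it must reproduce the surviving order-$k$ piece of the $\mathcal{F}$-difference, and the constant-order terms $\tfrac{1-g}{3}\ln(\vol(M,\rho_{0})/\vol(M,\rho))+\tfrac{1}{12\pi}S_{L}$ must also be absorbed in the matching. Two further points to tighten: to control the constant term with an $O(1/k)$ remainder you need the on-diagonal Bergman density to second subleading order (the variation weight $-k\dot\psi+2\dot\sigma$ costs one power of $k$), uniformly along the interpolating family; and ``path-independence'' needs no appeal to intrinsic densities --- you are integrating the exact $t$-derivative of a well-defined function of the endpoint metrics, so it is automatic, while the real issue is uniformity of the expansion in the path parameter.
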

We will make use of the two preceding Propositions in our computation.
\begin{Remark}
    In our case, we will have $L=L_{0}^{k}\otimes E$ and $B(x)=kf(x)$ consequently: 
    \begin{align*}
    \mathcal{F}(\rho,h)&=\frac{k\ln k}{4\pi}\int_{M}\mu_{\rho}f + \frac{k}{4\pi}\left(\int_{M}\mu_{\rho} f \ln(f)-\ln2\pi\int_{M}\mu_{\rho} f\right)+\frac{2}{3}(1-g)\ln k\\&+\frac{1}{12\pi}\int_{M}\mu_{\rho}R_{\rho}\ln(f)+\frac{1}{48\pi}\int_{M}\mu_{\rho}\ln(f)\Delta_{\rho}\ln(f)-\frac{2}{3}(1-g)\ln2\pi.
    \end{align*}
    Moreover, we will have $$\int_{M} \mu_{\rho} f=2\pi,$$ hence: \begin{align*}
        \mathcal{F}(\rho,h)=\frac{k\ln k}{2}+\frac{k}{2}\left(\int_{M}\mu_{\rho}\frac{f}{2\pi}\ln(f)-\ln2\pi \right)+\frac{1}{12\pi}\int_{M}\mu_{\rho}R_{\rho}\ln (f)+\frac{2}{3}(1-g)\ln(k)+\frac{1}{48\pi}\int_{M}\mu_{\rho}\ln(f)\Delta_{\rho}\ln(f)-\frac{2}{3}(1-g)\ln2\pi
    \end{align*}
    and with $b=\frac{f}{2\pi}=\frac{B}{2\pi k}$: $$\mathcal{F}(\rho,h)=\frac{k\ln k}{2}+\frac{k}{2}\int_{M}\mu_{\rho}b\ln(b)+\frac{2}{3}(1-g)\ln(k)+\frac{1}{12\pi}\int_{M}\mu_{\rho}R_{\rho}\ln(b)+\frac{1}{48\pi}\int_{M}\mu_{\rho}\ln(b)\Delta_{\rho}\ln(b).$$
\end{Remark}
Another interesting point about the transformation of the magnetic Laplacian is knowing how it transforms when rescaling the metric by a constant.
\begin{Proposition}[{\cite[eq. (2.39)]{Fay1992KernelFA}}]\label{ConstantLapMag}
    Let $\det \square_{\rho,h}$ be the determinant of the magnetic Laplacian on the Riemann surface $M$ of genus $g$ for the metric $\rho$. Then, if $\alpha > 0$, we have:
    \[
    \det \square_{\alpha \rho,h} = \alpha^{h^{0}(L) - \frac{k}{2} - \frac{1}{3}(1 - g)} \det \square_{\rho,h}.
    \]
    In the case of interest, we have $h^{0}(L) = k + 1 - g$, so:
    \[
    \det \square_{\alpha \rho,h} = \alpha^{\frac{k}{2} + \frac{2}{3}(1 - g)} \det \square_{\rho,h},
    \]
    hence
    \[
    \ln \det \square_{\alpha \rho,h} = \ln \det \square_{\rho,h} + \ln \alpha \left[ \frac{k}{2} + \frac{2(1 - g)}{3} \right].
    \]
\end{Proposition}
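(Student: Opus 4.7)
The plan is to exploit the zeta-regularisation definition of $\det\square_{L,\rho,h}$ directly, by tracking how the spectrum of $\square_L$ rescales under $\rho\mapsto\alpha\rho$ and then identifying the resulting coefficient as $-\zeta_{\square_L}(0)$.

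First I would check that $\square_{L,\alpha\rho,h}=\alpha^{-1}\square_{L,\rho,h}$. The Dolbeault operator $\bar\partial_L:\Omega^{0}(M,L)\to\Omega^{0,1}(M,L)$ is purely holomorphic and does not depend on either $\rho$ or $h$. In contrast, its formal adjoint does. Under $\rho\mapsto\alpha\rho$, the $L^2$ inner product on $\Omega^0(M,L)$ acquires an overall factor $\alpha$ coming from $\mu_\rho\mapsto\alpha\mu_\rho$, whereas the $L^2$ inner product on $\Omega^{0,1}(M,L)$ is unchanged: the scaling of the volume form by $\alpha$ is exactly compensated by the $\alpha^{-1}$ in the inverse metric $g^{\bar z z}$ used to pair one-forms pointwise. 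Writing out $\langle \bar\partial_L f,\eta\rangle=\langle f,\bar\partial^{\dagger}_{L}\eta\rangle$ with the new inner products, one reads off $\bar\partial^{\dagger}_{L,\alpha\rho,h}=\alpha^{-1}\bar\partial^{\dagger}_{L,\rho,h}$, and hence $\square_{L,\alpha\rho,h}=\alpha^{-1}\square_{L,\rho,h}$.

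Therefore the nonzero eigenvalues satisfy $\lambda_n(\alpha\rho)=\alpha^{-1}\lambda_n(\rho)$, so $\zeta_{\square_{L,\alpha\rho,h}}(s)=\alpha^{s}\zeta_{\square_{L,\rho,h}}(s)$. Differentiating at $s=0$ and using $\det\square=\exp(-\zeta'(0))$ yields
$$\ln\det\square_{L,\alpha\rho,h}=\ln\det\square_{L,\rho,h}-\zeta_{\square_{L,\rho,h}}(0)\ln\alpha.$$
It remains to compute $\zeta_{\square_L}(0)$. By the Mellin transform representation, $\zeta_{\square_L}(0)=a_{1}-\dim\ker\square_L$, where $a_1$ is the constant term in the small-$t$ heat trace expansion. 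The kernel of $\square_L=2\bar\partial_L^{\dagger}\bar\partial_L$ equals $\ker\bar\partial_L=H^0(M,L)$, so has dimension $h^0(L)$. The coefficient $a_1$ is given by the standard local heat kernel expansion for a generalised Laplacian on a Hermitian line bundle over a Riemann surface, yielding
$$a_{1}=\frac{1}{4\pi}\int_{M}\mu_{\rho}\!\left(B(\rho,h)+\frac{R_{\rho}}{6}\right)=\frac{k}{2}+\frac{1-g}{3},$$
using $\frac{1}{2\pi}\int_M B\,\mu_\rho=\deg L=k$ and the Gauss-Bonnet identity recalled in the preliminaries. Combining,
$$-\zeta_{\square_L}(0)=h^{0}(L)-\frac{k}{2}-\frac{1-g}{3},$$
which is exactly the exponent claimed. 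Specialising to $h^{0}(L)=k+1-g$ (which follows, for $L$ positive of degree $k\geq 2g-1$, from Kodaira vanishing together with Riemann-Roch) gives the second form of the formula.

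The only delicate step is the computation of $a_1$: one must be careful about the factor of $2$ in the convention $\square_L=2\bar\partial_L^{\dagger}\bar\partial_L$ and about the sign and normalisation of the endomorphism arising in the Bochner-Kodaira identity, since these affect the curvature terms but not the leading $1/t$ contribution. Once the coefficient is identified with the $(k/2)+(1-g)/3$ combination above, the proposition follows without further analysis. In practice, rather than redoing this computation, I would cite Fay's formula in \cite{Fay1992KernelFA} (which already incorporates these normalisations) and simply record the scaling argument above as justification.
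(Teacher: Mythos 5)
Your argument is correct, and it is genuinely more than what the paper does: the paper offers no proof of Proposition \ref{ConstantLapMag} at all, it simply quotes Fay's equation (2.39), so your derivation is a self-contained justification rather than a variant of an existing one. The two pillars of your proof are sound: (i) $\bar\partial_L$ is metric-independent while the $L^2$ pairing on $\Omega^{0}(M,L)$ scales by $\alpha$ and that on $\Omega^{0,1}(M,L)$ is scale-invariant, giving $\square_{L,\alpha\rho,h}=\alpha^{-1}\square_{L,\rho,h}$ and hence $\ln\det\square_{L,\alpha\rho,h}=\ln\det\square_{L,\rho,h}-\zeta_{\square}(0)\ln\alpha$; and (ii) $\zeta_{\square}(0)=a_{1}-h^{0}(L)$ with $a_{1}=\frac{1}{4\pi}\int_{M}\mu_{\rho}\left(B+\frac{R_{\rho}}{6}\right)=\frac{k}{2}+\frac{1-g}{3}$, which reproduces Fay's exponent exactly; this is moreover consistent with the paper's own appendix check for $g=0$, where $\zeta_{k}(0)=-\frac{k}{2}-\frac{2}{3}$ is computed directly and compared with this proposition. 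Two small refinements: the factor $2$ in $\square_{L}=2\bar\partial^{\dagger}_{L}\bar\partial_{L}$ is harmless for a cleaner reason than the one you give — rescaling an operator by a positive constant $c$ gives $\zeta_{c\square}(s)=c^{-s}\zeta_{\square}(s)$, so $\zeta(0)$, and hence the exponent, is unchanged, even though the $1/t$ heat coefficient is not — and the only genuinely convention-sensitive input is the sign in the Bochner--Kodaira identity, namely that on sections $\square_{L}=\nabla^{\dagger}\nabla-B$, so that the endomorphism entering the constant heat coefficient is $+B$; it is worth stating this identity explicitly rather than leaving it implicit. With those points made precise, your scaling-plus-heat-coefficient argument stands on its own, and citing Fay becomes a cross-check rather than a necessity.
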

An interesting fact in the previous Proposition is that the formula is exact, whereas in Proposition \ref{FormuleVarLapMag}, we have a remainder $O\left(k^{-1} \right)$.

Now we recall here the result of Finski \cite{FINSKI20183457} that we will need for our computation. We give below the results necessary for our computation with the notations that we introduced previously for the case of a Riemann surface. Our notations are different to that used in \cite{FINSKI20183457}, and in particular we suppose that the bundle $E$ appearing in \cite{FINSKI20183457} is trivial. 
\begin{theorem}[see {\cite[Th. 1.1 and 1.3]{FINSKI20183457}}]\label{ThFinski}
    The determinant of the magnetic Laplacian for the line bundle $L$ over $M$ admits an asymptotic expansion of the form:
    $$\ln \det \square_{L,\rho,h}=\alpha_{0}k\ln(k)+\beta_{0}k+\alpha_{1}\ln(k)+\beta_{1}+O\left(\frac{\ln k}{k}\right)$$
    where $$\alpha_{0}=-\frac{1}{2}\int_{M} \frac{B(\rho,h)}{2\pi k}\mu_{\rho} ~~ \text{ and } ~~ \beta_{0}=\frac{1}{2}\int_{M}\ln \left( \frac{B(\rho,h)}{2\pi k}\right)\frac{B(\rho,h)}{2\pi k}\mu_{\rho}-\frac{\ln2}{2}$$ when the degree $k$ goes to $+\infty.$ Moreover, if $B=2\pi k$ (i.e. $b=1)$, then: $$\alpha_{1}=\frac{2(g-1)}{3} ~~ \text{ and } ~~ \beta_{1}=\frac{g-1}{12}\left(24\zeta'(-1)+2\ln2\pi+7 +8\ln2\right),$$
    with $\zeta$ the Riemann zeta function.
\end{theorem}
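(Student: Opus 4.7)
The natural strategy is to analyze $\ln\det \square_{L,\rho,h} = -\zeta_{\square}'(0)$ through the Mellin representation
$$\zeta_\square(s) = \frac{1}{\Gamma(s)}\int_0^\infty t^{s-1}\bigl(\Tr e^{-t\square_{L,\rho,h}} - h^0(M,L)\bigr)\,dt,$$
using Riemann-Roch to identify the zero-mode dimension $h^0(M,L) = k+1-g$ on the positive bundle $L = L_0^k \otimes E$. The first step I would take is to invoke the anomaly formula of Proposition \ref{FormuleVarLapMag} to reduce the problem to the reference case $b \equiv 1$, i.e.\ $B(\rho,h) = 2\pi k$ constant. Equation \eqref{ModifMag} shows this reduction is available, up to a constant rescaling of the metric handled by Proposition \ref{ConstantLapMag}, by solving a prescribed-curvature equation for $\psi$ with the integrability condition $\int_M B\,\mu_\rho = 2\pi k$. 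The anomaly contribution $\mathcal F(\rho_0,h_0) - \mathcal F(\rho,h)$ then accounts precisely for the $b$-dependent pieces $\int b\ln b$, $\int R_\rho \ln b$, and $\int \ln b\,\Delta_\rho \ln b$ assembled in the remark following Proposition \ref{FormuleVarLapMag}, generating the general shape of $\alpha_0$ and $\beta_0$ from the constant-curvature baseline.

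Once reduced to the constant magnetic field model, the problem becomes semiclassical in $k$. I would split the Mellin integral into $\int_0^{\epsilon/k} + \int_{\epsilon/k}^1 + \int_1^\infty$. On the short time scale $t \sim 1/k$, the rescaling $t = s/k$ brings the heat kernel of $\square_{L,\rho,h}$ near each point to that of the Bargmann-Fock model on $\mathbb{C}$, whose $\zeta$-regularized determinant is explicit; this Bismut-Vasserot localization yields the leading $\alpha_0 k \ln k + \beta_0 k$ from the weighted volume $\int b\,\mu_\rho$ together with a subleading $\ln k$ correction. The long-time tail is exponentially small in $k$ because the Kodaira-Nakano-Bochner formula supplies a spectral gap of order $k$ for positive line bundles, once the kernel has been subtracted. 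The $\ln k$ coefficient $\alpha_1 = 2(g-1)/3 = -\chi(M)/3$ then emerges via Gauss-Bonnet as the universal Riemann-Roch correction in the asymptotic expansion.

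The main obstacle I expect is the exact determination of $\beta_1$: isolating the universal constant produced by the Fock-model regularized determinant is what generates $\zeta'(-1)$ together with the factors $\ln 2\pi$ and $\ln 2$ appearing in $\beta_1 = \tfrac{g-1}{12}(24\zeta'(-1) + 2\ln 2\pi + 7 + 8\ln 2)$. In parallel, one needs uniform off-diagonal Bergman-kernel estimates both to justify term-by-term integration of the short-time expansion and to bound the remainders accumulated in the anomaly reduction; together these yield the final error $O(k^{-1}\ln k)$. The delicate matching between the local Bargmann-Fock regularized determinant and the global geometric data is precisely the technical heart of the argument carried out in \cite{FINSKI20183457}, building on the torsion asymptotics of \cite{BismutVasserotAsymptoticAT}.
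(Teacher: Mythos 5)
This statement is not proved in the paper at all: it is imported verbatim from Finski's Theorems 1.1 and 1.3, and the only derivation the paper adds is a normalization correction, namely converting Finski's convention $\bar{\partial}_{L}^{\dagger}\bar{\partial}_{L}$ to $\square_{L,\rho,h}=2\bar{\partial}_{L}^{\dagger}\bar{\partial}_{L}$. Your sketch of how the underlying theorem is proved (Mellin representation, reduction to constant magnetic field via the anomaly formula of Proposition \ref{FormuleVarLapMag} plus the constant rescaling of Proposition \ref{ConstantLapMag}, Bargmann--Fock localization at time scale $t\sim 1/k$, spectral gap of order $k$ for the positive part of the spectrum) is a faithful outline of the Bismut--Vasserot/Finski strategy, but as a proof it has two gaps. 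First, the decisive content — the exact value of $\beta_{1}$, in particular the appearance of $\zeta'(-1)$ — is not derived; you explicitly defer it to \cite{FINSKI20183457}, i.e.\ to the very result being quoted, so your argument does not go beyond a citation either.

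Second, and more concretely, you never perform the convention translation, which is the one step the paper actually carries out. Finski's constants are stated for $\bar{\partial}_{L}^{\dagger}\bar{\partial}_{L}$; passing to $2\bar{\partial}_{L}^{\dagger}\bar{\partial}_{L}$ shifts the determinant by
\begin{equation*}
\ln\det\bigl(2\,\bar{\partial}_{L}^{\dagger}\bar{\partial}_{L}\bigr)=\ln\det\bigl(\bar{\partial}_{L}^{\dagger}\bar{\partial}_{L}\bigr)+\zeta_{\square}(0)\ln 2,
\qquad \zeta_{\square}(0)=-\frac{k}{2}-\frac{2}{3}(1-g),
\end{equation*}
where the value of $\zeta_{\square}(0)$ is exactly what is encoded in the exact rescaling law of Proposition \ref{ConstantLapMag}. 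This is the origin of the $-\frac{\ln 2}{2}$ in $\beta_{0}$ and of the $8\ln 2$ inside $\beta_{1}$ as stated in Theorem \ref{ThFinski}. Your proposal quotes these constants as if they emerged directly from the Fock-model computation, which conflates the two normalizations; without the $\zeta_{\square}(0)\ln 2$ bookkeeping, the constants produced by your scheme would be Finski's, not the ones in the statement. A minor further caution: the reduction to $B\equiv 2\pi k$ requires both solving $\tfrac{k}{2}\Delta_{\rho}\psi=B-\bar{B}$ and normalizing $\vol(M,\rho)=1$, and Proposition \ref{FormuleVarLapMag} is itself only an $O(1/k)$ asymptotic statement resting on Bergman-kernel expansions, so if you intend the argument to be independent of \cite{FINSKI20183457} you must check that this input is obtained without circularity.
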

Note that the values of $\beta_{0}$ and $\beta_{1}$ differ from that in \cite{FINSKI20183457}, indeed the terms $\frac{\ln2}{2}$ and $8\ln2$ come from the fact that we consider $\square_{L,\rho,h}=2\bar\partial_{L}^{\dagger}\bar{\partial}_{L}$ instead of $\bar{\partial}_{L}^{\dagger}\bar{\partial}_{L}.$
\begin{Remark}
    If $B(\rho,h)=2\pi k$, then $$\mathcal{F}(\rho,h)=\frac{1}{2}\vol(M,\rho)k\ln k+\frac{2}{3}(1-g)\ln k.$$
\end{Remark}
\begin{Remark}\label{UniformeFinski}
In the previous theorem, the values of $\alpha_{i}$ and $\beta_{i}$ will only depend on the magnetic field $B$ when varying the metric and moreover, this expansion will be uniform when varying the line bundle in the Jacobian (see \cite[Remark 1.2 and Equation (3.54)]{FINSKI20183457}), and this will be useful in Section \ref{SectionFinal}. 
\end{Remark}

\section{Arakelov Metric}\label{SectionArakelovMetric}
In this Section we will introduce the Arakelov metric which appears in the bosonization formula and give some properties of this metric.
\subsection{Definition of Arakelov metric}
Before defining the Arakelov metric, we introduce some notations on Riemann surfaces and the canonical metric, which are standard (see \cite{DHokerPhongFunctionMandelstamDiag,KlevtsovGenus,WentworthArakelov}).
\begin{Definition}
    Let $M$ be a compact Riemann surface of genus $g>0$. Denote by $(A_j, B_j)_{1 \leq j \leq g}$ a basis of $H_1(M, \mathbb{Z})$ whose intersection numbers satisfy:
    \[ \forall j,\ell, \quad
    A_j \circ B_\ell = \delta_{j\ell}, \quad A_j \circ A_\ell = 0, \quad B_j \circ B_\ell = 0.
    \]
    We then denote by $(\omega_i)_{1 \leq i \leq g}$ the normalized basis of Abelian differentials satisfying:
    \[ \forall j,\ell, \quad
    \int_{A_j} \omega_\ell = \delta_{j\ell}.
    \]
    We then define the matrix $\tau$ with entries
    \[
    \tau_{ij} = \int_{B_i} \omega_j
    \]
    as the \emph{period matrix}.
\end{Definition}

    The matrix $\tau$ is symmetric, and $\mathfrak{Im}(\tau)$ is positive definite.
We can use this matrix to define a lattice: 
$$\Lambda_{\tau}=\{m+\tau n | m,n \in \mathbb{Z}^{g}\}\subseteq \mathbb{C}^{g}$$ and the torus $$\Jac(M)=\mathbb{C}^{g}/\Lambda_{\tau}$$ called the Jacobian torus. 
One important fact about the Jacobian torus is that we can define the Abel map for a fixed base point $z_{0}$: $$z \in M \mapsto \left(\int_{z_{0}}^{z} \omega_{i} \right)_{1\leq i\leq g}\in \Jac(M) $$ which is an holomorphic embedding. On $\mathbb{C}^{g}$, we can define the theta function: 
$$\theta(z,\tau)=\sum_{n\in \mathbb{Z}^{g}}\exp\left(i\pi n^{T}\tau n+2i\pi n^{T}z \right).$$
In addition, we will denote by $\mathcal{D}$ the vector of Riemann constants: $$\mathcal{D}_{j}=\frac{1-\tau_{jj}}{2}+\sum_{i\neq j}\int_{A_{i}}\omega_{i}(z)\int_{z_{0}}^{z}\omega_{j}.$$
We will now define a normalized metric on the Riemann surface which appears naturally and is called the canonical metric.
\begin{Definition}\label{Def:MétCan}
    Let $M$ be a compact Riemann surface of genus $g>0$, and let $\tau_{ij}$ be the period matrix. The \emph{canonical metric} is defined by
    \[
    \mu_{\rho_{can}} = \frac{i}{2g} \sum_{i,j=1}^{g} (\mathfrak{Im} \tau)_{ij}^{-1} \omega_i \wedge \bar{\omega}_j.
    \]
    Observe that we have $\int_M \mu_{\rho_{can}} = 1$.
\end{Definition}
We can observe that the canonical metric is the metric induced by the flat metric on the Jacobian torus via the Abel map.

\begin{Remark}
    The previous definition does not apply to the case of genus $g=0$. In this case, we set
    \[
    \mu_{\rho_{can}} = \frac{i}{2\pi} \frac{dz \wedge d\bar{z}}{(1 + |z|^2)^2}.
    \]
    We can note that this is just the usual Fubini-Study metric rescaled so that $\int_{M}\mu_{\rho_{can}}=1.$
\end{Remark}

\begin{Definition}[\cite{ArakelovSJ_1974,WentworthArakelov,WentworthGluing}]
    The \emph{Arakelov-Green} function $G_{Ar}(x,y)$ on $M$ of genus $g$ is characterized by the following properties:
    \begin{enumerate}
        \item $G_{Ar}(x, y) = G_{Ar}(y, x)$,
        \item $\exp G_{Ar}(x, y)$ has a zero of order 1 at $x = y$,
        \item $\partial_z \bar{\partial}_z G_{Ar}(z, w) = i \pi \mu_{\rho_{can}}(z)$ for $z \neq w$,
        \item $\int_M \mu_{\rho_{can}}(z) G_{Ar}(z, w) = 0$ for all $w \in M$.
    \end{enumerate}
    \begin{Remark}
        The function that we denote by $G_{Ar}$ is usually denoted by $\ln G_{Ar}$ in Arakelov literature.
    \end{Remark}
\end{Definition}
 \begin{Remark}  
 By Arakelov-Green function we mean the Arakelov-Green function for the Arakelov metric. Arakelov-Green function equals the usual Green function for the canonical metric: \begin{equation}\label{Eq:GAR=Gcan}
  G_{Ar}=G_{Ar,\rho_{Ar}}=G_{\rho_{can}}.
   \end{equation}
\end{Remark}

\begin{Proposition}\cite[p.1176]{ArakelovSJ_1974},\cite[(B.1) p.457]{WentworthArakelov}, \cite[p.21]{Fay1992KernelFA}
    For the case of the sphere, we have:
    \[
    G_{Ar}^{\mathbb{S}^{2}}(z, w) = \frac{1}{2}+\ln\left(\frac{ |z - w|}{\sqrt{(1 + |z|^2)(1 + |w|^2)}}\right).
    \]
    For the case of the torus the Arakelov-Green function reads
    \[
    G_{Ar}^{\mathbb{T}^{2}_{\tau}}(z, w) = \ln\left| \frac{\theta_1(z - w, \tau)}{\eta(\tau)} \right| +  \frac{\pi}{4 \mathfrak{Im}(\tau)} (z - w - \bar{z} + \bar{w})^2.
    \]
%        where $\eta$ is the Dedekind eta function defined by  $$\eta(\tau)=e^{\frac{2i\pi \tau}{24}}\prod_{n=1}^{\infty}(1-e^{2i\pi n\tau})=q^{\frac{1}{24}}\prod_{n=1}^{\infty}(1-q^{n})$$ with $q=e^{2i\pi \tau}$ which satisfies $$\eta(\tau+1)=e^{i\pi/12}\eta(\tau) ~~\text{  and  } ~~ \eta\bigg(\frac{-1}{\tau}\bigg)=e^{-i\pi/4}\tau^{1/2}\eta(\tau).$$ 
                where $\eta$ is the Dedekind eta function $\eta(\tau)=q^{\frac{1}{24}}\prod_{n=1}^{\infty}(1-q^{n})$ with $q=e^{2i\pi \tau}$.
        The function $\theta_{1}$ is the first Jacobi theta function: 
%    $$\theta_{1}(z,\tau)=-\theta\begin{bmatrix}
%        1/2 \\1/2
%    \end{bmatrix}(z,\tau)=-\sum_{n\in\mathbb{Z}}\exp\left(i\pi\left(n+\frac{1}{2}\right)^{2}\tau +2i\pi\left(n+\frac{1}{2}\right)\left(z+\frac{1}{2}\right)\right)=i\sum_{n\in\mathbb{Z}}(-1)^{n+1}q^{\left(n+\frac{1}{2}\right)^{2}} e^{2i\pi nz}$$
    $$\theta_{1}(z,\tau)=i\sum_{n\in\mathbb{Z}}(-1)^{n+1}q^{\left(n+\frac{1}{2}\right)^{2}} e^{2i\pi nz}$$
satisfying $\theta_{1}(0,\tau)=0.$
\end{Proposition}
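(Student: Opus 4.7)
The proof strategy in both cases is to verify directly that the proposed explicit formula satisfies the four characterizing properties of the Arakelov-Green function listed in the preceding definition, since these properties determine $G_{Ar}$ uniquely. Concretely, for each formula I would check in order: (i) symmetry in $(z,w)$, (ii) that $\exp G_{Ar}$ has a simple zero on the diagonal, (iii) the $\partial_z\bar\partial_z$ equation with source $i\pi\mu_{\rho_{can}}$ off the diagonal, and (iv) the normalization $\int_M\mu_{\rho_{can}}(z)G_{Ar}(z,w)=0$.

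For the sphere case, the canonical volume form given after Definition \ref{Def:MétCan} reads $\mu_{\rho_{can}}=\frac{i}{2\pi}(1+|z|^2)^{-2}dz\wedge d\bar z$. Properties (i) and (ii) are immediate from the symmetric factor $|z-w|/\sqrt{(1+|z|^2)(1+|w|^2)}$. For (iii), I would use that $\partial_z\bar\partial_z\ln|z-w|=0$ for $z\neq w$ and the pointwise identity $\partial_z\partial_{\bar z}\ln(1+|z|^2)=(1+|z|^2)^{-2}$, which gives $\partial_z\bar\partial_z G_{Ar}(z,w)=-\tfrac12(1+|z|^2)^{-2}dz\wedge d\bar z$, matching $i\pi\mu_{\rho_{can}}(z)$. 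For (iv), the constant $\tfrac12$ is forced: compute the radially symmetric integral $\int_{\mathbb{CP}^1}\ln(1+|z|^2)\mu_{\rho_{can}}(z)$ by passing to polar coordinates and making the substitution $r^2=t$, and similarly evaluate $\int_{\mathbb{CP}^1}\ln|z-w|\,\mu_{\rho_{can}}(z)$ (which by $SU(2)$-invariance does not depend on $w$, and can be evaluated at $w=0$). The combination of these pins down $\tfrac12$.

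For the torus case, the canonical metric reduces to $\mu_{\rho_{can}}=\frac{i}{2\mathfrak{Im}(\tau)}dz\wedge d\bar z$ (from Definition \ref{Def:MétCan} with $g=1$ and $\omega_1=dz$). Symmetry follows because $\theta_1$ is odd so $|\theta_1(z-w,\tau)|=|\theta_1(w-z,\tau)|$, and the quadratic term is even in $z-w$. Property (ii) is the classical simple zero of $\theta_1$ at the origin. For (iii), $\ln|\theta_1(z-w,\tau)|$ is pluriharmonic away from the zero lattice, and the direct computation $\partial_z\partial_{\bar z}(z-w-\bar z+\bar w)^2=-2$ gives $\partial_z\bar\partial_z G_{Ar}(z,w)=-\tfrac{\pi}{2\mathfrak{Im}(\tau)}dz\wedge d\bar z=i\pi\mu_{\rho_{can}}(z)$. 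One must also verify $\Lambda_\tau$-periodicity in $z$ and $w$: the $A$-period is trivial, and under $z\mapsto z+\tau$ the quasi-periodicity of $\theta_1$ produces exactly the same quadratic factor that the correction term cancels.

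The genuine obstacle is property (iv) for the torus, which is what forces the Dedekind $\eta$-function in the denominator. My plan is to use the Jacobi triple product for $\theta_1$ to expand $\ln|\theta_1(z-w,\tau)|$ as a Fourier/$q$-series in $\mathfrak{Im}(z-w)$ and in $\mathfrak{Re}(z-w)$, then integrate term by term against $\mu_{\rho_{can}}$ over a fundamental domain; all non-constant Fourier modes vanish upon integration, leaving a constant which is precisely $\ln|\eta(\tau)|$ plus a contribution from the Gaussian-type quadratic term that evaluates $-\frac{\pi}{\mathfrak{Im}\tau}\int(\mathfrak{Im}(z-w))^2$ on the fundamental parallelogram. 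Collecting these cancellations yields $0$, equivalently this is the second Kronecker limit formula; invoking it directly would shortcut the Fourier computation. Once (iv) is established for both surfaces, uniqueness of the Arakelov-Green function concludes the proof.
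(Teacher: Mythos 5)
The paper itself offers no proof of this Proposition: the two formulas are simply quoted from Arakelov, Wentworth and Fay. Your strategy --- verify the four characterizing properties of the Arakelov--Green function and invoke uniqueness --- is the standard route, and almost all of your steps check out. On the sphere, $i\pi\mu_{\rho_{can}}=-\tfrac{1}{2}(1+|z|^{2})^{-2}\,dz\wedge d\bar z$ and your computation $\partial_z\bar\partial_z G=-\tfrac12(1+|z|^2)^{-2}dz\wedge d\bar z$ is right; on the torus, $\mu_{\rho_{can}}=\tfrac{i}{2\mathfrak{Im}(\tau)}dz\wedge d\bar z$, the identity $\partial_z\partial_{\bar z}(z-w-\bar z+\bar w)^2=-2$ gives the correct source term, the quasi-periodicity of $\theta_1$ under $z\mapsto z+\tau$ is cancelled by the quadratic correction exactly as you say, and the normalization integral is indeed the second Kronecker limit formula, i.e.\ $\frac{1}{\mathfrak{Im}(\tau)}\int_{\mathbb{T}^2_\tau}\bigl(\ln|\theta_1(z,\tau)|-\tfrac{\pi(\mathfrak{Im} z)^2}{\mathfrak{Im}(\tau)}\bigr)\tfrac{i}{2}\,dz\wedge d\bar z=\ln|\eta(\tau)|$, which is what forces the $\eta$ in the denominator.

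One step is wrong as stated: $\int_{\mathbb{S}^2}\ln|z-w|\,\mu_{\rho_{can}}(z)$ is \emph{not} independent of $w$. The function $\ln|z-w|$ alone is not $SU(2)$-invariant, and in fact this integral equals $\tfrac12\ln(1+|w|^2)$ (its $\partial_w\bar\partial_w$ is $\tfrac12(1+|w|^2)^{-2}\neq0$). What is $SU(2)$-invariant is the full combination $\ln\bigl(|z-w|/\sqrt{(1+|z|^2)(1+|w|^2)}\bigr)$, the logarithm of (half) the chordal distance, together with $\mu_{\rho_{can}}$. So the correct argument for property (iv) is: by this invariance $\int_{\mathbb{S}^2}G_{Ar}(z,w)\,\mu_{\rho_{can}}(z)$ is independent of $w$, and at $w=0$ it equals
\begin{equation*}
\frac12+\frac12\int_{0}^{\infty}\frac{\ln\frac{t}{1+t}}{(1+t)^{2}}\,dt=\frac12+\frac12\bigl(0-1\bigr)=0,
\end{equation*}
using $\int_0^\infty\frac{\ln t}{(1+t)^2}dt=0$ and $\int_0^\infty\frac{\ln(1+t)}{(1+t)^2}dt=1$. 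With this one-line repair your verification goes through, and the rest of the proposal stands as a complete (and standard) proof of the Proposition.
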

From the Arakelov-Green function, we can define the Arakelov metric which is of special importance for our computation. Indeed, most of the quantities appearing in the bosonization formula are computed for this metric and for this reason we will do our computation of the partition function for this metric.
\begin{Definition}[\cite{ArakelovSJ_1974,WentworthArakelov,WentworthGluing}]\label{ArMét}
    We define the \emph{Arakelov metric} $\rho_{Ar} = \rho_{Ar}(z) |dz|^2$ with
    \[
    \ln \rho_{Ar}(z) = 2 \lim_{w \to z} \left( G_{Ar}(z, w) - \ln |z - w| \right).
    \]
\end{Definition}
It is well known and  can be easily computed for the sphere and the torus. 
\begin{Proposition}\cite[(B.1) p.457]{WentworthArakelov}, \cite[p.21]{Fay1992KernelFA}. 
    For the sphere:
    \[
    \rho_{Ar}^{\mathbb{S}^{2}}(z) = \frac{e}{(1 + |z|^2)^2}
    \]
    and for the torus:
    \[
    \rho_{Ar}^{\mathbb{T}^{2}_{\tau}}(z) = 4\pi^2 |\eta(\tau)|^4.
    \]
\end{Proposition}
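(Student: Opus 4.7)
The plan is to apply Definition~\ref{ArMét} directly to the two explicit expressions for $G_{Ar}$ given in the preceding proposition, and to evaluate the indicated limit along the diagonal $w \to z$.

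\textbf{Sphere.} From the formula for $G_{Ar}^{\mathbb{S}^{2}}$ the $\ln|z-w|$ term cancels cleanly, leaving
$$G_{Ar}^{\mathbb{S}^{2}}(z,w)-\ln|z-w| \;=\; \frac{1}{2} - \frac{1}{2}\ln\bigl((1+|z|^{2})(1+|w|^{2})\bigr),$$
whose right-hand side is continuous at $w=z$. The limit is $\frac{1}{2}-\ln(1+|z|^{2})$; doubling and exponentiating gives $\rho_{Ar}^{\mathbb{S}^{2}}(z)=e/(1+|z|^{2})^{2}$ immediately.

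\textbf{Torus.} The second term $\frac{\pi}{4\,\mathfrak{Im}(\tau)}(z-w-\bar z+\bar w)^{2}=-\frac{\pi}{\mathfrak{Im}(\tau)}(\mathfrak{Im}(z-w))^{2}$ is smooth and vanishes as $w\to z$, so all the singular behaviour sits in $\ln|\theta_{1}(z-w,\tau)/\eta(\tau)|$. Since $\theta_{1}(\cdot,\tau)$ is odd holomorphic with $\theta_{1}(0,\tau)=0$, its Taylor expansion at $0$ is $\theta_{1}(u,\tau)=\theta_{1}'(0,\tau)\,u+O(u^{3})$, whence
$$\lim_{w\to z}\bigl(\ln|\theta_{1}(z-w,\tau)|-\ln|z-w|\bigr)=\ln|\theta_{1}'(0,\tau)|.$$
Invoking the classical Jacobi derivative formula $\theta_{1}'(0,\tau)=2\pi\,\eta(\tau)^{3}$, this equals $\ln(2\pi)+3\ln|\eta(\tau)|$. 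Subtracting the $\ln|\eta(\tau)|$ from the denominator leaves $\ln(2\pi)+2\ln|\eta(\tau)|$, and doubling and exponentiating yields $\rho_{Ar}^{\mathbb{T}^{2}_{\tau}}(z)=4\pi^{2}|\eta(\tau)|^{4}$.

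\textbf{Main obstacle.} There is no genuine obstacle: both calculations are essentially one-line evaluations of a regularised limit. The only non-elementary input is the Jacobi identity $\theta_{1}'(0,\tau)=2\pi\,\eta(\tau)^{3}$, which is a classical fact from the theory of elliptic functions that can be quoted. A minor point worth verifying for cleanliness is that the limit in Definition~\ref{ArMét} is indeed independent of the path $w\to z$, which follows from the smoothness of the non-singular pieces appearing in both expressions.
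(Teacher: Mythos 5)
Your computation is correct and is exactly the ``easy computation'' the paper alludes to without writing out (it only cites Wentworth and Fay): plug the explicit Arakelov--Green functions into Definition \ref{ArMét}, the sphere case being immediate and the torus case reducing to the Jacobi derivative formula $\theta_{1}'(0,\tau)=2\pi\,\eta(\tau)^{3}$. The only caveat is that this derivative formula is for the standard convention of the first Jacobi theta function (the series as printed in the paper has a typo in the exponent), but with that convention your argument is complete.
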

Those two metrics, for the sphere and the torus, are simply constant rescalings of the usual metrics on the sphere and the torus. However, this is not the case for $g>1$.
We can easily observe that we have $\Ric(\rho_{Ar}^{\mathbb{S}^{2}}) = 4\pi \mu$ and $\Ric(\rho_{Ar}^{\mathbb{T}^{2}_{\tau}}) = 0 = -4\pi (g-1) \mu$, which means (see below) that the Arakelov metric seen as a Hermitian metric on the anticanonical bundle for the torus and the sphere is admissible. This is actually true for every genus $g$.

The volumes of the sphere and the torus for the Arakelov metric can be explicitly computed. 
\begin{Proposition}\label{Prop:VolumeArakelov}
    We have: $$\mu_{\rho_{Ar}^{\mathbb{S^{2}}}}=\frac{ie}{2(1+|z|^{2})}dz\wedge d\bar{z} ~~ \text{ and } ~~ \mu_{\rho_{Ar}^{\mathbb{T^{2}_{\tau}}}}=2i\pi^{2}|\eta(\tau)|^{4}dz\wedge d\bar{z}.$$ Hence 
    \[
    \vol(\mathbb{S}^2,\rho_{Ar}) = \pi e
    ~~
    \text{ and } ~~
    \vol(\mathbb{T}^2_{\tau},\rho_{Ar}) = 4\pi^2 \mathfrak{Im}(\tau) |\eta(\tau)|^4.
    \]
\end{Proposition}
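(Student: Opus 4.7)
The proof is a direct calculation from the explicit formulas for $\rho_{Ar}$ on the sphere and the torus given in the preceding Proposition, combined with the general definition $\mu_{\rho}=\frac{i}{2}\rho(z)\,dz\wedge d\bar z$.

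First I would handle the sphere. Substituting $\rho_{Ar}^{\mathbb{S}^{2}}(z)=\frac{e}{(1+|z|^{2})^{2}}$ into the definition of $\mu_{\rho}$ gives the stated expression for $\mu_{\rho_{Ar}^{\mathbb{S}^{2}}}$. Using that $\frac{i}{2}dz\wedge d\bar z$ coincides with standard Lebesgue measure $dx\,dy$ on $\mathbb{C}$, polar coordinates $z=re^{i\theta}$ transform the integral into
$$\vol(\mathbb{S}^{2},\rho_{Ar})=\int_{\mathbb{C}}\frac{e}{(1+|z|^{2})^{2}}\,dx\,dy=2\pi e\int_{0}^{\infty}\frac{r\,dr}{(1+r^{2})^{2}}=2\pi e\cdot\frac{1}{2}=\pi e,$$
which is the claimed value. (Note that the integral over $\mathbb{C}$ gives the full volume of the sphere seen through stereographic projection since the missing point is of measure zero.)

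For the torus, $\rho_{Ar}^{\mathbb{T}^{2}_{\tau}}(z)=4\pi^{2}|\eta(\tau)|^{4}$ is constant, so applying $\mu_{\rho}=\frac{i}{2}\rho\,dz\wedge d\bar z$ immediately yields $\mu_{\rho_{Ar}^{\mathbb{T}^{2}_{\tau}}}=2i\pi^{2}|\eta(\tau)|^{4}\,dz\wedge d\bar z$. Realising $\mathbb{T}^{2}_{\tau}=\mathbb{C}/(\mathbb{Z}+\tau\mathbb{Z})$ and using a fundamental domain with vertices $0,1,\tau,1+\tau$, the Euclidean area is $\mathfrak{Im}(\tau)$, so
$$\vol(\mathbb{T}^{2}_{\tau},\rho_{Ar})=4\pi^{2}|\eta(\tau)|^{4}\cdot\mathfrak{Im}(\tau),$$
as stated.

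There is no real obstacle here: the proposition is a straightforward unpacking of the definitions, once the explicit formulas for $\rho_{Ar}$ on the sphere and the torus are accepted. The only minor care required is keeping track of the factor $\frac{i}{2}$ between $dz\wedge d\bar z$ and the Lebesgue area element, which the stated volume-form expressions already incorporate correctly.
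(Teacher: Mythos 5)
Your computation is correct and is exactly the direct verification the paper leaves implicit (the proposition is stated without proof): substitute the explicit Arakelov metrics into $\mu_{\rho}=\frac{i}{2}\rho(z)\,dz\wedge d\bar z$, use $\frac{i}{2}dz\wedge d\bar z = dx\,dy$, and integrate (polar coordinates for the sphere, the fundamental domain of area $\mathfrak{Im}(\tau)$ for the torus). One remark: the paper's displayed volume form for the sphere has a typo — it should read $\frac{ie}{2(1+|z|^{2})^{2}}dz\wedge d\bar z$, as written later in Section \ref{DirectSphere} — and your substitution correctly produces the squared denominator, which is what the value $\pi e$ requires.
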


For the case of genus $g>1$, we do not have such simple formulas but it is possible to express quantities about the Arakelov metric using hyperbolic metrics (see Appendix \ref{AnnexeArakelovHyperbolic}).
\subsection{Admissible Metric}
To make use of the bosonization formula we need to consider an admissible Hermitian metric.
\begin{Definition}\label{Def:MétAdm} (\cite[eq. (4.3)]{WentworthGluing}, \cite[p.17]{Fay1992KernelFA})
    Let $L \to M$ be a line bundle of degree $k$. A Hermitian metric $h$ on $L$ is said to be \emph{admissible} if
    \[
    \Ric(h) = 2\pi k \mu_{\rho_{can}}.
    \]
\end{Definition}
In particular, we can note that the Arakelov metric can be seen as an admissible metric on the anti-canonical bundle.
\begin{Proposition}[{\cite[eq. (4.4)]{WentworthGluing}}, {\cite[eq. (1.33)]{Fay1992KernelFA}}]
    Let $\rho_{Ar}$ be the Arakelov metric. We have
    \[
    \Ric(\rho_{Ar}) = -i\partial \bar{\partial} \ln(\rho_{Ar}) = -4\pi (g-1) \mu_{\rho_{can}}=2\pi\chi(M)\mu_{\rho_{can}}.
    \]
\end{Proposition}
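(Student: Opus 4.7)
The plan is to compute $-i\partial\bar\partial \ln\rho_{Ar}$ pointwise by unfolding the defining limit $\ln\rho_{Ar}(z) = 2\lim_{w\to z}[G_{Ar}(z,w) - \ln|z-w|]$ from Definition~\ref{ArMét} and combining it with the four characterizing properties of $G_{Ar}$. First I would work in a local holomorphic chart and introduce $F(z,w) := G_{Ar}(z,w) - \ln|z-w|$; property~(2) ensures that $F$ extends to a smooth symmetric function on the chart square with $F(z,z) = \tfrac{1}{2}\ln\rho_{Ar}(z)$. Property~(3), read together with the full distributional Green equation for $G_{\rho_{can}}=G_{Ar}$ and the distributional identity $\partial_z\bar\partial_z\ln|z-w| = \tfrac{\pi}{2}\delta^{(2)}(z-w)$, yields the \emph{smooth} pointwise equality $\partial_z\bar\partial_z F(z,w)\,dz\wedge d\bar z = i\pi\mu_{\rho_{can}}(z)$ on the whole chart square; the mirror identity with $w$ in place of $z$ follows from symmetry.

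Next, to extract the $\partial\bar\partial$ of the diagonal restriction, I would change variables to $s = (z+w)/2$, $t = z-w$ and use the identity $\partial_s\bar\partial_s = \partial_z\bar\partial_z + \partial_w\bar\partial_w + \partial_z\bar\partial_w + \partial_w\bar\partial_z$. Setting $t=0$ gives
$$\tfrac{1}{2}\partial_z\bar\partial_z\ln\rho_{Ar}(z) = \bigl[\partial_z\bar\partial_z F + \partial_w\bar\partial_w F + 2\operatorname{Re}(\partial_z\bar\partial_w F)\bigr]_{z=w=s}.$$
The first two terms are already known: they each contribute $-\tfrac{\pi}{2}\rho_{can}(z)$, summing to $-\pi\rho_{can}(z)$. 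For the remaining mixed term, the key input is the classical identification (see \cite{Fay1992KernelFA}) of the off-diagonal kernel $\partial_z\bar\partial_w G_{Ar}(z,w)$ with the Bergman reproducing kernel for holomorphic $1$-forms on $M$, namely $\partial_z\bar\partial_w G_{Ar}(z,w) = \tfrac{\pi}{2}\sum_{i,j}(\mathfrak{Im}\,\tau)^{-1}_{ij}f_i(z)\overline{f_j(w)}$ when $\omega_i = f_i(z)\,dz$ locally. Restricting to the diagonal and comparing with Definition~\ref{Def:MétCan} gives $\partial_z\bar\partial_w G_{Ar}(z,z) = \tfrac{\pi g}{2}\rho_{can}(z)$.

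Assembling the three contributions, $\tfrac{1}{2}\partial_z\bar\partial_z\ln\rho_{Ar}(z) = -\pi\rho_{can}(z) + \pi g\rho_{can}(z) = \pi(g-1)\rho_{can}(z)$, so $-i\partial\bar\partial\ln\rho_{Ar} = -4\pi(g-1)\mu_{\rho_{can}} = 2\pi\chi(M)\mu_{\rho_{can}}$, as claimed. The main obstacle is the identification of the mixed partial $\partial_z\bar\partial_w G_{Ar}$ as the Bergman kernel: the four defining properties of $G_{Ar}$ pin it down only up to a pluri-harmonic ambiguity in the off-diagonal direction, so one needs an auxiliary Hodge-theoretic input (or equivalently, the explicit expression of $G_{Ar}$ via the prime form and theta functions) to determine this cross term. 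As consistency checks, on $\mathbb{S}^2$ the mixed partial vanishes (compatible with $g=0$), while on the torus a direct calculation from the explicit formula gives $\tfrac{\pi}{2\mathfrak{Im}\,\tau}$ (compatible with $g=1$), both in agreement with the closed-form expressions for $\rho_{Ar}$ recorded earlier in this section.
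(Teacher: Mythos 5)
Your argument is correct, and it is genuinely more than what the paper does: the paper states this Proposition as a quoted result (Wentworth eq.\ (4.8), Fay eq.\ (1.33)) with no proof at all, whereas you actually derive it from Definitions \ref{GreenAr}--\ref{ArMét}. All the bookkeeping checks out: with the paper's conventions one has $\partial_z\partial_{\bar z}F(z,w)=-\tfrac{\pi}{2}\rho_{can}(z)$ smoothly across the diagonal, the diagonal-restriction identity for $\partial_s\bar\partial_s$ is right, and the normalization $\partial_z\partial_{\bar w}G_{Ar}=\tfrac{\pi}{2}\sum_{i,j}(\mathfrak{Im}\tau)^{-1}_{ij}f_i(z)\overline{f_j(w)}$ (smooth part) is the correct one, giving $\tfrac{\pi g}{2}\rho_{can}$ on the diagonal and hence $\pi(g-1)\rho_{can}$ in total, i.e.\ $-4\pi(g-1)\mu_{\rho_{can}}$. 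Your $g=0$ and $g=1$ consistency checks against the explicit $\rho_{Ar}$ formulas are also correct.

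Two remarks on the one input you flag as an ``obstacle.'' First, your worry about a ``pluri-harmonic ambiguity'' is overstated: properties (1)--(4) characterize $G_{Ar}$ uniquely, so they do determine the cross term; the issue is only that property (3) by itself does not compute it. Second, the Bergman-kernel identity can be extracted from the axioms by a short self-contained argument, so no appeal to the prime-form expression is needed. Since $\partial_{\bar z}\partial_z G_{Ar}(z,w)=-\tfrac{\pi}{2}\rho_{can}(z)$ is independent of $w$ off the diagonal, the smooth part $K(z,w):=\partial_z\partial_{\bar w}F(z,w)$ satisfies $\partial_{\bar z}K=0$ and (by symmetry) $\partial_w K=0$, so $K(z,w)\,dz\otimes d\bar w$ is a holomorphic $1$-form in $z$ times an antiholomorphic one in $w$, i.e.\ $K=\sum_{i,j}c_{ij}f_i(z)\overline{f_j(w)}$. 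The coefficients are then fixed by Stokes: $\int_M \partial_z G_{Ar}(z,w)\,dz\wedge\bar\omega_k(z)=\int_M d_z\big(G_{Ar}(\cdot,w)\,\bar\omega_k\big)=0$ (the logarithmic singularity contributes nothing in the limit), and differentiating in $\bar w$ and using $\int_M\omega_i\wedge\bar\omega_k=-2i(\mathfrak{Im}\tau)_{ik}$ together with the $-\tfrac{\pi}{2}\delta$ produced by $\partial_{\bar w}\partial_z\ln|z-w|$ yields $c=\tfrac{\pi}{2}(\mathfrak{Im}\tau)^{-1}$. With that lemma supplied, your proof is complete and entirely elementary relative to the definitions in Sections \ref{SectionGreen} and \ref{SectionArakelovMetric}.
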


An important fact is that we can always define a line bundle of degree $k$ with admissible metric (see \cite{FaltingsCalculusArithmeticSurface1984,Fay1992KernelFA}).

\subsection{Relations between Arakelov metric and canonical metric}\label{Section:ArakelovCanonical}
We can always note $\rho_{Ar}=e^{2\sigma_{Ar}}\rho_{can}$, where $\rho_{can}$ is the canonical metric. With this notation, we can observe that $$R_{\rho_{Ar}}=8\pi(1-g)e^{-2\sigma_{Ar}} ~~ \text{ and } ~~ B(\rho_{Ar},h)=2\pi k e^{-2\sigma_{Ar}}$$ for $h$ and admissible metric. 
Moreover, from Equations \eqref{Eq:GarRho->GRho0} and \eqref{Eq:GAR=Gcan} we can compute: $$\sigma_{Ar}(x)=-\frac{\Psi(x,\rho_{can})}{2}+\frac{\Psi_{P}(\rho_{can})-S_{L}(-\sigma_{Ar},\rho_{Ar})}{8\pi(1-g)}.$$
Similarly, we can define $\phi_{Ar}$ relating $\mu_{\rho_{Ar}}$ and $\mu_{\rho_{can}}$ as in Equation (\ref{MetriquePhi}). Then, for $g\neq1$: $$\phi_{Ar}(x)=\frac{\Psi(x,\rho_{Ar})}{4\pi(1-g)}-\int_{M}\mu_{\rho_{can}}\phi_{Ar}+2S_{AY}(\phi_{Ar},\rho_{can})$$ and since $\phi_{Ar}$ is defined up to a constant, we can take, for $g\neq1$: \begin{equation}\label{PhiHat}\hat{\phi}_{Ar}(x)=\frac{\Psi(x,\rho_{Ar})}{4\pi(1-g)}.\end{equation}
\begin{Remark}
    For $g=0,1$, we can take $\hat{\phi}_{Ar}=0,$ since in those cases the Arakelov metric and the canonical metric are constant rescalings of the usual metrics.
\end{Remark}
With the previous notation, the following computations hold: 
\begin{equation}\label{Eq:IntSigmaArDelta-PsiP}\int_{M} \mu_{Ar} \sigma_{Ar} \Delta_{Ar} \sigma_{Ar}=\Psi_{P}(\rho_{can})\end{equation} and $$\Psi_{P}(\rho_{can})-S_{L}(-\sigma_{Ar},\rho_{Ar})=\Psi_{P}(\rho_{Ar})-2\pi(1-g)S_{M}(-\sigma_{Ar},-\phi_{Ar},\rho_{Ar})$$ and $$\Psi_{P}(\rho_{Ar})=16\pi^{2}(1-g)^{2}S_{AY}(\hat{\phi}_{Ar},\rho_{can}).$$ Hence: \begin{equation}\label{Eq:IntSigmaAr-PsiP-SAY}\int_{M} \mu_{\rho_{can}}\sigma_{Ar}=\frac{\Psi_{P}(\rho_{can})-S_{L}(-\sigma_{Ar},\rho_{Ar})}{8\pi(1-g)}=2\pi(1-g)S_{AY}(\hat{\phi}_{Ar},\rho_{can})-\frac{1}{4}S_{M}(-\sigma_{Ar},-\phi_{Ar},\rho_{Ar})\end{equation}

Now, for the special cases where $g=0,1$, the previous quantities can be easily computed.
\begin{itemize}
    \item For $g=0$, we have $\sigma_{Ar}=\frac{1+\ln\pi}{2}$ and $\Psi(x,\rho_{Ar})=\Psi(x,\rho_{can})=\Psi_{P}(\rho_{can})=\Psi_{P}(\rho_{Ar})=0$, therefore $$S_{L}(-\sigma_{Ar},\rho_{Ar})=-4\pi(1+\ln\pi) ~~ \text{ and }~~ S_{M}(-\sigma_{Ar},-\phi_{Ar},\rho_{Ar})=-2(1+\ln \pi).$$
\item Similarly, for $g=1$, $\sigma_{Ar}=\ln \big(2\pi \sqrt{\mathfrak{Im}(\tau)} |\eta(\tau)|^{2}\big)$ and $\Psi(x,\rho_{Ar})=\Psi(x,\rho_{can})=\Psi_{P}(\rho_{can})=\Psi_{P}(\rho_{Ar})=0$, moreover $S_{L}(-\sigma_{Ar},\rho_{Ar})=0$, and $\int_{M} \mu_{\rho_{can}}\sigma_{Ar}=\sigma_{Ar}$ and we can compute $$S_{M}(-\sigma_{Ar},-\phi_{Ar},\rho_{Ar})=-4\sigma_{Ar}=-\ln\left(16\pi^{4}\mathfrak{Im}(\tau)^{2}|\eta(\tau)|^{8} \right).$$
\end{itemize}

In the case where $g\geq2$, the relations between the canonical metric, the Arakelov metric and the hyperbolic metric of constant scalar curvature can be expressed using quantities of hyperbolic geometry (see Appendix \ref{AnnexeArakelovHyperbolic}).

\section{Bosonization formula}\label{SectionBoso}
In order to obtain the asymptotic expansion when $N$ tends to infinity of the partition function of the Coulomb gas, we will use the bosonization formula relating the determinant of the magnetic Laplacian on a Hermitian line bundle $(L,h)$ of degree $k=N+g-1$ and different quantities for the Arakelov metric including the determinant of the scalar Laplacian and Arakelov-Green functions appearing in the definition of the partition function for the Coulomb gas.
\begin{Proposition}[Bosonization formula, see {\cite[eq (1.1)]{WentworthGluing}},{\cite[Th. 5.11]{Fay1992KernelFA}}]\label{Pr:FormuleBoso}
    If $k \geq g$, for $h$ an admissible metric on a line bundle $L$ of degree $k=N+g-1$, we have $$\frac{\det \square_{L,\rho_{Ar},h}}{\det \langle \omega_{i}, \omega_{j} \rangle_{\rho_{Ar},h}}= B_{g,k} \frac{\exp\left(2\sum_{1\leq i <j \leq N} G_{Ar}(p_{i},p_{j}) \right)}{\| \det \omega_{i}(p_{j})\|_{h}^{2}}\bigg(\frac{\det\Delta_{\rho_{Ar}}}{\vol(M,\rho_{Ar}) \det \mathfrak{Im}(\tau)}\bigg)^{-\frac{1}{2}}\|\theta\|^{2}\big([L]-\sum_{i=1}^{N}p_{i}-\mathcal{D},\tau\big)$$  with $G_{Ar}$ the Arakelov-Green function, $(\omega_{i})_{1\leq i\leq N}$ a basis of $H^{0}(M,L)$ and $\|\det\omega_{i}(p_{j})\|_{h}^{2}=|\det\omega_{i}(p_{j})|^{2}h(p_{j}).$
Here: $$\|\theta(z,\tau)\|^{2}=\exp\left(-2\pi \mathfrak{Im}(z)^{T}(\mathfrak{Im}(\tau))^{-1}\mathfrak{Im}(z) \right)|\theta(z,\tau)|^{2}$$
\end{Proposition}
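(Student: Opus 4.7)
The plan is to recognize the two sides of the identity as norms of canonical sections of the same holomorphic line bundle over the symmetric product $M^{(N)}$, and to match them up to the universal constant $B_{g,k}$.

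First, I would fix the admissible Hermitian metric $h$ on $L$ of degree $k=N+g-1$ and work with the Arakelov metric $\rho_{Ar}$ on $M$. The left-hand side is the squared Quillen-type norm of the canonical section
$$\det \omega_{i}(p_{j}) \in \det H^{0}(M,L)^{*} \otimes \bigotimes_{j=1}^{N} L_{p_{j}},$$
built from a basis $(\omega_{i})$ of $H^{0}(M,L)$, in which the Gram determinant $\det\langle\omega_{i},\omega_{j}\rangle_{\rho_{Ar},h}$ provides the $L^{2}$-metric on $\det H^{0}(M,L)$. The right-hand side assembles the prime-form-type factor $\exp\bigl(2\sum_{i<j}G_{Ar}(p_{i},p_{j})\bigr)$ with the pulled-back theta function $\|\theta\|^{2}\bigl([L]-\sum p_{i}-\mathcal{D},\tau\bigr)$. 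Both expressions vanish on the same divisor in $M^{(N)}$, namely where $h^{0}\bigl(L(-\sum p_{i})\bigr)>0$ — by Riemann's vanishing theorem on the theta side and by the drop in rank of $(\omega_{i}(p_{j}))$ on the left — and both are holomorphic in $(p_{j})$ once the explicit non-holomorphic factors $\exp(-2\pi\,\mathfrak{Im}(\cdot)^{T}\mathfrak{Im}(\tau)^{-1}\mathfrak{Im}(\cdot))$ and $\exp(2G_{Ar})$ are accounted for. Their non-holomorphic parts cancel thanks to $\partial\bar\partial G_{Ar}=i\pi\mu_{\rho_{can}}$ and the definition of $\|\theta\|^{2}$, which forces the ratio to be constant in $(p_{j})$.

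Next, to locate the metric dependence and isolate $B_{g,k}$, I would perform an anomaly computation. Varying $h\mapsto e^{-k\psi}h$ and $\rho_{Ar}\mapsto e^{2\sigma}\rho_{Ar}$, Proposition \ref{VarDetFi} controls the change of the left-hand side, while Propositions \ref{VarGreen} and \ref{VarDetSca} control the change of the Green-function sum and of $\det\Delta_{\rho_{Ar}}$ on the right; the change of the theta factor under translation in $\mathrm{Pic}^{k}(M)$ is encoded by the quasi-periodicity of $\theta$. Using that $h$ is admissible, so $B(\rho_{Ar},h)=2\pi k\,e^{-2\sigma_{Ar}}$, and that the Arakelov metric has curvature proportional to $\mu_{\rho_{can}}$, the Liouville, Mabuchi, and volume contributions cancel between the two sides. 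Hence the ratio depends only on $g$ and $k$, and $B_{g,k}$ can then be pinned down by evaluating on a convenient test configuration — e.g.\ a Koszul-type resolution, or the case $L=K^{m}$ with $K$ the canonical bundle — as was carried out in \cite{PreciseWentwhorthI, PreciseWentworthII}.

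The main obstacle is the careful tracking of multiplicative constants: factors of $\ln 2$ stemming from the convention $\square_{L}=2\bar\partial_{L}^{\dagger}\bar\partial_{L}$ as opposed to $\bar\partial_{L}^{\dagger}\bar\partial_{L}$, the volume normalizations $\vol(M,\rho_{Ar})$, the $\det\mathfrak{Im}(\tau)$ factor arising from the flat metric on the Jacobian, and the chosen normalization of $\theta$ and $\mathcal{D}$ all have to be kept mutually consistent. This bookkeeping is precisely the content of the ``precise'' bosonization identities of Wentworth \cite{PreciseWentwhorthI, PreciseWentworthII} and of Fay \cite{Fay1992KernelFA}, which I would invoke to fix $B_{g,k}$ explicitly rather than recompute from scratch.
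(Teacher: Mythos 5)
The paper does not prove this Proposition at all: it is imported verbatim from the cited sources (Wentworth's precise bosonization formula and Fay, Th.~5.11), so there is no internal argument to compare yours against. Your outline is a fair reconstruction of the strategy used in those references: both sides, as functions of $(p_{1},\dots,p_{N})$, are squared norms of sections of the same line bundle over $M^{N}$, they vanish on the same incidence divisor (Riemann vanishing on the theta side, rank drop of $(\omega_{i}(p_{j}))$ on the other), the smooth curvature parts balance precisely because $h$ is admissible ($c_{1}(L,h)=k\mu_{\rho_{can}}$ with $k=N-1+g$), $\partial\bar\partial G_{Ar}=i\pi\mu_{\rho_{can}}$, and the theta norm pulls back the polarization, so the ratio has pluriharmonic logarithm on a compact manifold and is constant; the constant is then identified with $B_{g,k}$.

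As a standalone proof, however, two essential points remain unestablished. First, ``the non-holomorphic parts cancel, which forces the ratio to be constant'' needs to be an actual comparison of $\partial\bar\partial\log$ of both sides as currents, including the delta masses along the diagonals and along the locus $h^{0}\bigl(L(-\sum p_{i})\bigr)>0$ with matching multiplicities, and it must also handle the dependence on the Jacobian variable $[L]$ (the constant a priori depends on the point of $\mathrm{Pic}^{k}$, and one must argue it does not). Your anomaly step (varying $\rho$ and $h$ via Propositions \ref{VarDetFi}, \ref{VarGreen}, \ref{VarDetSca}) is somewhat off-target here, since the identity is asserted for the fixed Arakelov/admissible pair; such variation formulas would only transport the identity to other metrics. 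Second, the explicit value of $B_{g,k}$ --- the genuinely hard, quantitative content --- is simply outsourced to \cite{PreciseWentwhorthI,PreciseWentworthII}. Since the paper itself treats the whole Proposition as a citation, deferring to the same sources is acceptable in context, but you should be clear that what you have written is a proof plan whose decisive steps (the current-level curvature/divisor comparison and the evaluation of the constant) live entirely in the cited works, not in your argument.
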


Moreover, $B_{g,k}$ are constants depending on the genus of the surface $M$ and the degree $k$ of the line bundle $L$ which have been computed explicitly in \cite{WentworthGluing} (see also \cite{GilletSoulAnalyticTorsionToddGenus}):
\begin{Proposition}[{\cite[Corollary 1.1]{WentworthGluing}}]\label{ConstantesWentworth}
    We have: $$B_{g,k}=(2\pi)^{2g-k}e^{\frac{c_{g}}{4}}$$ with $$c_{0}=-24\zeta'(-1)+1-6\ln2\pi-2\ln2, ~~  ~~ c_{1}=-8\ln2\pi ~~ \text{ and } ~~c_{g}=(1-g)c_{0}+gc_{1}.$$ \end{Proposition}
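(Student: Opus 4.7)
My plan is to follow the strategy of \cite{PreciseWentworthII}: first show that $B_{g,k}$ really is a constant depending only on $g$ and $k$, then isolate the elementary prefactor $(2\pi)^{2g-k}$ by scaling, and finally pin down the residual constant $c_g$ by explicit computations in the base cases $g=0,1$ together with a degeneration argument for $g\geq 2$.

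The first step is to check that both sides of Proposition \ref{Pr:FormuleBoso} transform identically under (i) moving the points $p_i$, (ii) changing the basis $(\omega_i)$, (iii) moving $L$ inside its class in $\Jac(M)$, and (iv) rescaling the admissible metric $h$ by a positive constant. The factor $\det\langle\omega_i,\omega_j\rangle_{\rho_{Ar},h}$ absorbs changes of basis and of $h$; the product $\|\det\omega_i(p_j)\|_h^2$ together with the theta norm absorbs point motion and the variation of $[L]-\sum_i p_i-\mathcal{D}$; and the defining property of the Arakelov-Green function (Definition \ref{ArMét}) guarantees that the sum of Green functions rearranges correctly. These cancellations imply that $B_{g,k}$ is independent of all these choices. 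The explicit $k$-dependence $(2\pi)^{2g-k}$ is then extracted by tracking the scaling of both sides under $h\mapsto\alpha h$, using Proposition \ref{ConstantLapMag} on the left, the straightforward $\alpha$-dependence of the right-hand side, the normalisation $\int_M \mu_{\rho_{can}}=1$, and the admissibility relation $B(\rho_{Ar},h)=2\pi k\,\mu_{\rho_{can}}$.

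To obtain $c_g$ I would next treat the two explicit cases. For $g=0$ the bundle is $\mathcal{O}(k)$ on $\mathbb{P}^1$, a basis of sections is given by the monomials, the Arakelov-Green function and Arakelov volume form are explicit (Proposition \ref{Prop:VolumeArakelov}), and the zeta-regularised determinant of the scalar Laplacian on the round sphere is classical, producing the $-24\zeta'(-1)$ term; direct substitution yields the stated value of $c_0$. For $g=1$ one works on a flat torus $\mathbb{C}/\Lambda_\tau$: $\det\Delta_{\rho_{Ar}}$ is computed by the Kronecker limit formula in terms of $|\eta(\tau)|^4\mathfrak{Im}(\tau)$, the Arakelov-Green function is an explicit combination of $\theta_1$ and $\eta$, and the $N$-point theta norm can be evaluated using classical Jacobi identities; this pins down $c_1=-8\ln 2\pi$.

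For $g\geq 2$ the linearity $c_g=(1-g)c_0+g c_1$ cannot come from an explicit model and must be extracted from a universal structural argument. The route I would pursue, and which I expect to be the main obstacle, is a pinching argument: degenerate the surface by shrinking a non-separating cycle and analyse how each factor in the bosonization identity behaves. The analytic torsion has controlled asymptotics near the boundary of moduli space (Bismut-Bost, Jorgenson-Takhtajan), the Arakelov-Green function admits a Wentworth-type expansion near the node, and theta functions factorise in the standard way. Matching divergent and finite contributions across the pinching should produce the recursion $c_g-c_{g-1}=c_1-c_0$, which together with the base values forces the stated linear formula. The delicate point is to keep track of every factor of $2$ and $2\pi$ through the limit and to reconcile the normalisation $\square_{L,\rho,h}=2\bar\partial_L^\dagger\bar\partial_L$ used here.
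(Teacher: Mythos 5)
You should first note what the paper actually does here: it offers no proof of this Proposition at all --- the values of $B_{g,k}$ are imported verbatim from Wentworth \cite{PreciseWentworthII}, so there is no internal argument to compare against. Your outline (well-definedness of the constant, explicit evaluation at $g=0,1$, degeneration to force linearity of $c_g$ in $g$) does mirror the strategy of the cited reference, but as written it is a roadmap with two genuine gaps rather than a proof.

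First, the step where you claim to extract the factor $(2\pi)^{2g-k}$ ``by tracking the scaling of both sides under $h\mapsto\alpha h$'' cannot work. Under a constant rescaling of the admissible metric, $\det\square_{L,\rho_{Ar},h}$ is unchanged (the adjoint $\bar\partial_{L}^{\dagger}$ is insensitive to a constant factor in $h$), while $\det\langle\omega_i,\omega_j\rangle_{\rho_{Ar},h}$ and $\|\det\omega_i(p_j)\|_h^2$ each acquire the same factor $\alpha^{N}$; hence both sides of Proposition \ref{Pr:FormuleBoso} transform identically and no constraint on $B_{g,k}$ results. Nor can you rescale $\rho_{Ar}$ and invoke Proposition \ref{ConstantLapMag}: the Arakelov metric is canonically normalized by Definition \ref{ArMét}, and every ingredient of the formula ($G_{Ar}$, $\det\Delta_{\rho_{Ar}}$, $\vol(M,\rho_{Ar})$, admissibility of $h$) is tied to that normalization, so a rescaled identity is no longer the bosonization formula and again only verifies covariance. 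The $k$-dependence must come from an actual computation, e.g.\ comparing the identity for degrees $k$ and $k+1$ (adding a point/section) or from the explicit models. Second, the $g\geq 2$ step is asserted rather than proved: the uniform asymptotics of $G_{Ar}$, of $\det\Delta_{\rho_{Ar}}$ (equivalently of Faltings' $\delta$, Equation \eqref{FaltingDeltaEq}), of the theta norm and of the analytic torsion under a non-separating pinch, together with the verification that the finite parts match so as to yield precisely $c_g-c_{g-1}=c_1-c_0$, are the technical core of \cite{PreciseWentwhorthI,PreciseWentworthII}; saying the matching ``should produce the recursion'' defers exactly the part that needs proof. The $g=0,1$ evaluations you describe are sound in principle (and the appendix of this paper carries out closely related explicit checks), but on their own they only confirm $c_0$ and $c_1$, not the general formula.
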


\begin{Corollary}
    In particular: $$B_{0,k}=\frac{1}{(2\pi)^{k}}\exp\bigg(-6\zeta'(-1)+\frac{1}{4}-\frac{3}{2}\ln2\pi -\frac{1}{2}\ln2\bigg),~~ \text{ and }~~ B_{1,k}=\frac{1}{(2\pi)^{k}}.$$
\end{Corollary}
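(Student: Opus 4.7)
The statement is a direct specialization of Proposition \ref{ConstantesWentworth}, so the plan is simply to substitute the values of $c_0$ and $c_1$ given there into the general formula $B_{g,k}=(2\pi)^{2g-k}e^{c_g/4}$ and simplify. There is no genuine obstacle; the only thing to be careful about is bookkeeping of the prefactors of $\ln 2\pi$ and $\ln 2$ when dividing by $4$, and the cancellation in the genus one case.

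For $g=0$, I would write $B_{0,k}=(2\pi)^{-k}e^{c_0/4}$ and compute
\[
\frac{c_0}{4}=\frac{1}{4}\bigl(-24\zeta'(-1)+1-6\ln 2\pi-2\ln 2\bigr)=-6\zeta'(-1)+\frac{1}{4}-\frac{3}{2}\ln 2\pi-\frac{1}{2}\ln 2,
\]
which, reinserted in the exponential, yields the first displayed formula.

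For $g=1$, I would start from $B_{1,k}=(2\pi)^{2-k}e^{c_1/4}$ with $c_1=-8\ln 2\pi$, so $c_1/4=-2\ln 2\pi$ and $e^{c_1/4}=(2\pi)^{-2}$. Multiplying by the prefactor $(2\pi)^{2-k}$, the two powers of $(2\pi)^{2}$ cancel exactly and we obtain $B_{1,k}=(2\pi)^{-k}$. This finishes the proof; the only ``hard'' part is remembering that the exponent of $(2\pi)$ in the general formula is $2g-k$ and not $-k$, which is precisely what makes the genus one case collapse to a pure power of $2\pi$.
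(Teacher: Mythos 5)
Your computation is correct and is exactly the argument the paper intends: the Corollary follows by substituting $g=0$ and $g=1$ into $B_{g,k}=(2\pi)^{2g-k}e^{c_g/4}$ from Proposition \ref{ConstantesWentworth}, dividing $c_0$ and $c_1$ by $4$, and noting the cancellation $(2\pi)^{2-k}(2\pi)^{-2}=(2\pi)^{-k}$ in the genus one case. Nothing is missing.
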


\begin{Remark} We can compute for every $g$ 
     \begin{equation}\label{lnBgk}
         \ln B_{g,k}=-k\ln2\pi+\frac{1-g}{4}c_{0}=-k\ln2\pi +\frac{1-g}{2} \left(\frac{1}{2}-12\zeta'(-1)-3\ln2\pi-\ln2 \right).
         \end{equation}
 \end{Remark}
The constants $c_{g}$ in Proposition \ref{ConstantesWentworth} are the constants appearing in the Faltings' delta invariant \cite{FaltingsCalculusArithmeticSurface1984,PreciseWentwhorthI,JorgensonBoundsFaltings} 
\begin{equation}\label{FaltingDeltaEq}
\delta(M)=c_{g}-6\ln \left(\frac{\det\Delta_{M,\rho_{Ar}}}{\vol(M,\rho_{Ar})} \right).
\end{equation}
 In the bosonization formula, we see that the factor $\|\theta\|^{2}\big([L]-\sum_{i=1}^{N}p_{i}-\mathcal{D},\tau\big)$ appears, therefore, it will first be easier to compute the \emph{modified partition function}: \begin{equation}\label{ModifPartTheta}
\mathcal{Z}_{\rho,\tilde{\rho},N}^{(\theta)}(V)=\frac{1}{N!} \int_{M^{N}} \mu_{\rho}^{N}\exp\left(2 \sum_{1\leq i <j \leq N} G_{\rho}(p_{i},p_{j})+k\sum_{i=1}^{N} V(p_{i})\right)\|\theta\|^{2}\big([L]-\sum_{i=1}^{N}p_{i}-\mathcal{D},\tau\big).
\end{equation}
Indeed, from the bosonization formula, we directly see that: 
\begin{equation}\label{ModifPartTheta+Boso}
    \mathcal{Z}_{\rho_{Ar},\rho_{can},N}^{(\theta)}(V)=\frac{1}{N!}\bigg(\frac{\det\Delta_{\rho_{Ar}}}{\vol(M,\rho_{Ar}) \det \mathfrak{Im}(\tau)}\bigg)^{1/2}\int_{M^{N}}\mu_{\rho_{Ar}}^{N}\frac{\det\square_{L,\rho_{Ar},h} \|\det \omega_{i}(p_{j})\|^{2}_{h}}{B_{g,k}\det\langle \omega_{i},\omega_{j}\rangle_{\rho_{Ar},h}}e^{k\sum V(p_{i})}
\end{equation}
where $B_{g,k}$ are the explicit constants given in Proposition \ref{ConstantesWentworth}. We also know that $\det\square _{L,\rho_{Ar},h}$ admits an asymptotic expansion when $k$ tends to infinity, that can be computed using the result of \cite{FINSKI20183457}. We then have to deal with the potential $V.$ For that we will modify the Hermitian metric $h$ on the right hand side and replace it by $\hat{h}_{k,V}=e^{kV}h.$ Consequently, in the following Section we will compute the asymptotic expansion of the modified partition function defined in \eqref{ModifPartTheta}) from which we will deduce the asymptotic expansion of the partition function (see Definition \ref{DefFoncPart}) using Section \ref{AnnexeCalculTheta}. We can also note that in the case of the sphere, $g=0$, there is no theta function and consequently, in this case the modified partition function is the same as the partition function.
\section{Computation of the asymptotic expansion of the partition function}\label{SectionCalcul}
We will compute the asymptotic expansion of the modified partition function $\mathcal{Z}_{\rho_{Ar},\rho_{can},N}^{(\theta)}(V)$ defined in Equation \eqref{ModifPartTheta}. We make this choice of metrics because it simplifies the computations. The modification of the metrics are equivalent to the modification of the potential as seen in Definition \ref{DefFoncPart}. To carry out our computation, we introduce a new Hermitian metric $\hat{h}_{k,V}$ on the line bundle of degree $k$, namely \begin{equation}\label{eq:HermitianPotential} \hat{h}_{k,V}=he^{kV},
\end{equation} where $h$ is an admissible metric. Then, Equation \eqref{ModifPartTheta+Boso} becomes
\begin{equation}
    \mathcal{Z}_{\rho_{Ar},\rho_{can},N}^{(\theta)}(V)=\frac{1}{N!}\bigg(\frac{\det\Delta_{\rho_{Ar}}}{\vol(M,\rho_{Ar}) \det \mathfrak{Im}(\tau)}\bigg)^{1/2}\int_{M^{n}}\mu_{\rho_{Ar}}^{N}\frac{\det\square_{L,\rho_{Ar},h} \|\det \omega_{i}(p_{j})\|^{2}_{\hat{h}_{k,V}}}{B_{g,k}\det\langle \omega_{i},\omega_{j}\rangle_{\rho_{Ar},h}}.
\end{equation}
However, we know that: 
$$\int_{M}\mu_{\rho_{Ar}}^{N}\frac{\|\det\omega_{i}(p_{j})\|^{2}_{\hat{h}_{k,V}}}{\det\langle \omega_{i},\omega_{j}\rangle_{\rho_{Ar},\hat{h}_{k,V}}}=N!$$ and from Proposition \ref{VarDetFi}:
\begin{align*}
    \frac{\det\square_{L,\rho_{Ar},{h}}}{\det\langle \omega_{i},\omega_{j} \rangle_{\rho_{Ar},h}}&=\frac{\det\square_{L,\rho_{Ar},\hat{h}_{k,V}}}{\det\langle \omega_{i},\omega_{j}\rangle_{\rho_{Ar},\hat{h}_{k,V}}}\exp\left(-k^{2}S_{2}(-V,\rho_{Ar},h)+\frac{k}{2}S_{1}(0,-V,\rho_{Ar},h) \right),
\end{align*}
where the functionals $S_{1}$ and $S_{2}$ are defined in Definition \ref{DefFonctionnelleS}.
Combining those facts we obtain the following exact equality: \begin{equation}
    \mathcal{Z}_{\rho_{Ar},\rho_{can},N}^{(\theta)}(V)=\frac{\det\square_{L,\rho_{Ar},\hat{h}_{k,V}} }{B_{g,k}}\exp\left(-k^{2}S_{2}(-V,\rho_{Ar},h)+\frac{k}{2}S_{1}(0,-V,\rho_{Ar},h) \right)\bigg(\frac{\det\Delta_{\rho_{Ar}}}{\vol(M,\rho_{Ar}) \det \mathfrak{Im}(\tau)}\bigg)^{1/2}.
\end{equation}
In particular, for $V=0$, we have 
\begin{equation}
    \mathcal{Z}_{\rho_{Ar},\rho_{can},N}^{(\theta)}(0)=\frac{\det\square_{L,\rho_{Ar},h} }{B_{g,k}}\bigg(\frac{\det\Delta_{\rho_{Ar}}}{\vol(M,\rho_{Ar}) \det \mathfrak{Im}(\tau)}\bigg)^{1/2}
    \end{equation} where $h$ is an admissible metric, hence \begin{equation}
\frac{\mathcal{Z}_{\rho_{Ar},{\rho_{can}},N}^{(\theta)}(V)}{\mathcal{Z}_{\rho_{Ar},\rho_{can},N}^{(\theta)}(0)}=\frac{\det\square_{L,\rho_{Ar},\hat{h}_{k,V}} }{\det\square_{L,\rho_{Ar},h}}\exp\left(-k^{2}S_{2}(-V,\rho_{Ar},h)-\frac{k}{2}S_{1}(0,-V,\rho_{Ar},h) \right).
    \end{equation}

    We have then obtained the following 
    \begin{theorem}\label{TheoremPartitionGénéral}
        With $L$ a line bundle over $M$ of degree $k=N+g-1$ and $h$ an admissible metric on $L$, the following exact equality holds: 
        \begin{equation}\label{EqExactGeneral}
            \ln\mathcal{Z}_{\rho_{Ar},{\rho_{can}},N}^{(\theta)}(V)=\ln \mathcal{Z}_{\rho_{Ar},{\rho_{can}},N}^{(\theta)}(0)+\ln \det\square_{L,\rho_{Ar},\hat{h}_{k,V}}-\ln \det\square_{L,\rho_{Ar},h}+k^{2}S_{2}(V,\rho_{Ar},h)+\frac{k}{2}S_{1}(0,V,\rho_{Ar},h)
        \end{equation}
        with
        \begin{equation}\label{EqExactGeneralV=0}\ln\mathcal{Z}_{\rho_{Ar},\rho_{can},N}^{(\theta)}(0)=\ln\det\square_{L,\rho_{Ar},h} -\ln{B_{g,k}}+\frac{1}{2}\ln \bigg(\frac{\det\Delta_{\rho_{Ar}}}{\vol(M,\rho_{Ar}) \det \mathfrak{Im}(\tau)}\bigg).
        \end{equation}
    \end{theorem}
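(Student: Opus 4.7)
The plan is to combine the bosonization formula with an identity that rewrites the potential as a change of Hermitian metric on $L$, so that the integral over $M^N$ collapses to a known orthonormality constant. The starting point is Equation \eqref{ModifPartTheta+Boso}, which is obtained by inserting the bosonization formula (Proposition \ref{Pr:FormuleBoso}) into the definition \eqref{ModifPartTheta} of $\mathcal{Z}^{(\theta)}_{\rho_{Ar},\rho_{can},N}(V)$; this step handles both the Arakelov--Green function sum and the theta factor simultaneously and is the reason we chose the Arakelov metric for integration.

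The central trick is to absorb the factor $\exp(k\sum_i V(p_i))$ into a new Hermitian metric $\hat{h}_{k,V}=e^{kV}h$ on $L$. By the very definition of the pointwise norm on sections, we have
\[
e^{k\sum_i V(p_i)}\,\|\det\omega_i(p_j)\|_h^2=\|\det\omega_i(p_j)\|_{\hat h_{k,V}}^2.
\]
Next I would divide and multiply by $\det\langle\omega_i,\omega_j\rangle_{\rho_{Ar},\hat h_{k,V}}$ and invoke the standard orthonormality identity
\[
\int_{M^N}\mu_{\rho_{Ar}}^N\,\frac{\|\det\omega_i(p_j)\|_{\hat h_{k,V}}^2}{\det\langle\omega_i,\omega_j\rangle_{\rho_{Ar},\hat h_{k,V}}}=N!,
\]
which is the familiar $L^2$-normalization of a Slater determinant and follows from expanding the determinant and using orthonormality of any orthonormalized basis. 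After this step, the only object left inside the integral has disappeared and one is left with purely $h$- and $\hat h_{k,V}$-dependent prefactors.

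To complete the first equality it remains to replace $\det\square_{L,\rho_{Ar},h}/\det\langle\omega_i,\omega_j\rangle_{\rho_{Ar},h}$ by the same ratio written for $\hat h_{k,V}$, which is exactly the content of Proposition \ref{VarDetFi} applied with $\sigma=0$, volumes equal, and the Hermitian shift $h\mapsto\hat h_{k,V}$; the cocycle property of $S_1,S_2$ and the vanishing $S_L(0,\rho_{Ar})=0$ ensure that only the two functionals $k^2S_2(V,\rho_{Ar},h)$ and $\tfrac{k}{2}S_1(0,V,\rho_{Ar},h)$ survive in the exponent. Finally, specializing $V=0$ (so $\hat h_{k,V}=h$ and $S_1=S_2=0$) yields Equation \eqref{EqExactGeneralV=0}, and subtracting gives Equation \eqref{EqExactGeneral}.

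The only delicate point is bookkeeping the sign and argument conventions of $S_1$ and $S_2$ when moving from $h$ to $\hat h_{k,V}=e^{kV}h$: depending on how one identifies this with the parametrization $h=e^{-k\psi}h_0$ of Equation \eqref{MetriqueHermPsi}, one has to either read Proposition \ref{VarDetFi} directly or use the cocycle relation; once this sign convention is fixed consistently with the definitions in Definition \ref{DefFonctionnelleS}, the whole computation is an exact algebraic identity, and no asymptotics or error terms enter at this stage.
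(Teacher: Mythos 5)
Your proposal is correct and follows essentially the same route as the paper: starting from Equation \eqref{ModifPartTheta+Boso}, absorbing $e^{k\sum_i V(p_i)}$ into the metric $\hat{h}_{k,V}=e^{kV}h$, collapsing the integral via the $N!$ orthonormality identity, applying Proposition \ref{VarDetFi} with $\sigma=0$ (with the same sign/cocycle bookkeeping for $S_1,S_2$ that the paper leaves implicit), and specializing to $V=0$ to obtain \eqref{EqExactGeneralV=0}. No substantive difference from the paper's argument.
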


    Therefore, to obtain the desired asymptotic expansion of the partition function, we have to find the expansion of $\ln \det\square_{L,\rho_{Ar},\hat{h}_{k,V}}-\ln \det\square_{L,\rho_{Ar},h}$, which can be done using Proposition \ref{FormuleVarLapMag} and the asymptotic expansion of $\ln \det\square_{L,\rho_{Ar},h}$, which can be found in \cite{FINSKI20183457}. In addition the explicit values of $B_{g,k}$ were computed by Wentworth \cite{WentworthGluing} and are given in Proposition \ref{ConstantesWentworth} and Equation \eqref{lnBgk}.
\begin{Remark}
    We can note the interesting facts that Equations \eqref{EqExactGeneral} and \eqref{EqExactGeneralV=0} in the preceding theorem are exact equations.  
\end{Remark} 
In the preceding case, when $h$ is an admissible metric we can see that: $$S_{1}(0,-V,\rho_{Ar},h)=2(1-g) \int_{M}\mu_{\rho_{can}} V  $$ and 
\begin{equation}\label{Sinfini}
	-S_{2}(-V,\rho_{Ar},h)=\int_{M}\mu_{\rho_{can}}V\left(\frac{1}{8\pi} \Delta_{\rho_{can}}V + 1\right)=: S_{\infty}(V) 
\end{equation}
\subsection{Computation of $\ln \det\square_{L,\rho_{Ar},\hat{h}_{k,V}}-\ln \det\square_{L,\rho_{Ar},h}$}
We will compute the asymptotic expansion of $\ln \det\square_{L,\rho_{Ar},\hat{h}_{k,V}}-\ln \det\square_{L,\rho_{Ar},h}$, with $h$ an admissible metric. The result will be dependent on the potential $V.$ Proposition \ref{FormuleVarLapMag} gives:
\begin{equation}
    \ln \det\square_{L,\rho_{Ar},\hat{h}_{k,V}}-\ln \det\square_{L,\rho_{Ar},h}=\mathcal{F}\left(\rho_{Ar},h \right)-\mathcal{F}\left(\rho_{Ar},\hat{h}_{k,V} \right)+O\left(\frac{1}{k}\right),
\end{equation}
    so we have to compute $\mathcal{F}\left(\rho_{Ar},h \right)$ and $\mathcal{F}\left(\rho_{Ar},\hat{h}_{k,V} \right).$ We recall that $$\mathcal{F}(\rho,h)=\frac{1}{2\pi}\int_{M}\mu_{\rho} \left( \frac{B(\rho,h)}{2}\ln \frac{B(\rho,h)}{2\pi}+\frac{R_{\rho}}{6}\ln \frac{B(\rho,h)}{2\pi}+\frac{\ln B(\rho,h)}{24}\Delta_{\rho} \ln B(\rho,h )\right)$$ and from Definition \ref{Def:MétAdm} and Equation \eqref{ModifMag} $$B(\rho_{Ar},h)=2\pi ke^{-2\sigma_{Ar}} ~~ \text{ and } ~~ B(\rho_{Ar},\hat{h}_{k,V})=2\pi k e^{-2\sigma_{Ar}}+\frac{k}{2}\Delta_{\rho_{Ar}}V.$$ In order to use the Proposition \ref{FormuleVarLapMag}, we need the magnetic field to be positive (recall Equation \eqref{eq:CondMagPos}), hence we need to suppose that $$2\pi e^{-2\sigma_{Ar}}+\frac{1}{2}\Delta_{Ar}V>0$$ this is equivalent to: \begin{equation}\label{ConditionV}
    2\pi +\frac{1}{2}\Delta_{\rho_{can}}V>0.
\end{equation}
We will say that a function $V$ satisfying the preceding equation is \emph{quasi-subharmonic} for the metric $\rho_{can}.$
 
 We have for $h$ an admissible metric: \begin{equation}\mathcal{F}(\rho_{Ar},h)=\frac{k}{2}\ln k-k\int_{M}\mu_{\rho_{can}}\sigma_{Ar}+\frac{2}{3}(1-g)\ln k-\frac{4(1-g)}{3}\int_{M}\mu_{\rho_{can}}\sigma_{Ar}+\frac{1}{12\pi}\int_{M}\mu_{\rho_{Ar}}\sigma_{Ar}\Delta_{Ar}\sigma_{Ar}.\end{equation} and with $V$ quasi-subharmonic for the metric $\rho_{can}$ \begin{align*}
    \mathcal{F}(\rho_{Ar},\hat{h}_{k,V})&=\frac{k}{2}\ln k+k\left[\frac{1}{2}\int_{M}\mu_{\rho_{can}}\left(1+\frac{\Delta_{\rho_{can}}V}{4\pi} \right)\ln\left(1+\frac{\Delta_{\rho_{can}}V}{4\pi} \right)-\int_{M}\mu_{\rho_{can}}\sigma_{Ar}+\frac{1}{8\pi}\int_{M}\mu_{\rho_{can}}(R_{\rho_{can}}-8\pi(1-g))V \right]\\[1.5mm]&+\frac{2}{3}(1-g)\ln k+\frac{2}{3}(1-g)\int_{M}\mu_{\rho_{can}}\ln\left(1+\frac{\Delta_{\rho_{can}}V}{4\pi} \right)-\frac{4(1-g)}{3}\int_{M}\mu_{\rho_{can}}\sigma_{Ar}+\frac{1}{12\pi}\int_{M}\mu_{\rho_{Ar}}\sigma_{Ar}\Delta_{Ar}\sigma_{Ar} \\[1.5mm]&+\frac{1}{24\pi}\int_{M}\mu_{\rho_{can}}\left(R_{\rho_{can}}-8\pi(1-g) \right)\ln \left(1+\frac{\Delta_{\rho_{can}}V}{4\pi} \right)+\frac{1}{48\pi}\int_{M}\mu_{\rho_{can}}\ln \left(1+\frac{\Delta_{\rho_{can}}V}{4\pi} \right)\Delta_{\rho_{can}}\ln\left(1+ \frac{\Delta_{\rho_{can}}V}{4\pi} \right) 
\end{align*} and we obtain the following result.
\begin{theorem}\label{DifférenceLaplacienMagPotentiel}
    If $h$ is an admissible metric, $\hat{h}_{k,V}=e^{kV}h$ and $2\pi +\frac{1}{2}\Delta_{\rho_{can}}V>0$, then: 
    \begin{align*}
    \ln \det\square_{L,\rho_{Ar},\hat{h}_{k,V}}-\ln\det\square_{L,\rho_{Ar},h}&=k\left[\frac{1}{2}\int_{M}\mu_{\rho_{can}}\left(1+\frac{\Delta_{\rho_{can}}V}{4\pi} \right)\ln\left(1+\frac{\Delta_{\rho_{can}}V}{4\pi} \right)+\frac{1}{8\pi}\int_{M}\mu_{\rho_{can}}(R_{\rho_{can}}-8\pi(1-g))V \right]\\[1.5mm]&+\frac{2}{3}(1-g)\int_{M}\mu_{\rho_{can}}\ln\left(1+\frac{\Delta_{\rho_{can}}V}{4\pi} \right) +\frac{1}{24\pi}\int_{M}\mu_{\rho_{can}}\left(R_{\rho_{can}}-8\pi(1-g) \right)\ln \left(1+\frac{\Delta_{\rho_{can}}V}{4\pi} \right)\\[1.5mm]&+\frac{1}{48\pi}\int_{M}\mu_{\rho_{can}}\ln \left(1+\frac{\Delta_{\rho_{can}}V}{4\pi} \right)\Delta_{\rho_{can}}\ln\left(1+ \frac{\Delta_{\rho_{can}}V}{4\pi} \right)+O\left(\frac{1}{k} \right).
    \end{align*}
\end{theorem}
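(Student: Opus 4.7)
The plan is to apply Proposition \ref{FormuleVarLapMag} directly with $(\rho_{0},h_{0})=(\rho_{Ar},h)$ and $(\rho,h')=(\rho_{Ar},\hat{h}_{k,V})$. This immediately reduces the theorem to an explicit evaluation of $\mathcal{F}(\rho_{Ar},h)-\mathcal{F}(\rho_{Ar},\hat{h}_{k,V})$. Before invoking that proposition, I would verify its hypothesis: admissibility of $h$ gives $B(\rho_{Ar},h)=2\pi k\,e^{-2\sigma_{Ar}}>0$, and by Equation \eqref{ModifMag} applied with $\psi=-V$, together with $\Delta_{\rho_{Ar}}=e^{-2\sigma_{Ar}}\Delta_{\rho_{can}}$,
$$B(\rho_{Ar},\hat{h}_{k,V})=2\pi k\,e^{-2\sigma_{Ar}}\left(1+\frac{\Delta_{\rho_{can}}V}{4\pi}\right),$$
whose positivity is exactly the quasi-subharmonicity condition $2\pi+\tfrac12\Delta_{\rho_{can}}V>0$ appearing in the statement.

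The bulk of the work is then evaluating each of the two $\mathcal{F}$ terms against the canonical volume form. For $\mathcal{F}(\rho_{Ar},h)$, admissibility gives $\ln(B/2\pi)=\ln k-2\sigma_{Ar}$ and $\Delta_{\rho_{Ar}}\ln B=-2\Delta_{\rho_{Ar}}\sigma_{Ar}$; combined with $\mu_{\rho_{Ar}}=e^{2\sigma_{Ar}}\mu_{\rho_{can}}$, $\vol(M,\rho_{can})=1$, $R_{\rho_{Ar}}\mu_{\rho_{Ar}}=8\pi(1-g)\mu_{\rho_{can}}$ and the vanishing of integrals of Laplacians of scalars, this reproduces the intermediate closed expression for $\mathcal{F}(\rho_{Ar},h)$ already given in the excerpt. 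For $\mathcal{F}(\rho_{Ar},\hat{h}_{k,V})$ I would introduce $W:=1+\tfrac{\Delta_{\rho_{can}}V}{4\pi}$, so that $\ln(B/2\pi)=\ln k-2\sigma_{Ar}+\ln W$, and expand the three summands of $\mathcal{F}$ term by term. Two observations make the computation go through cleanly: the cross term $\int_{M}\mu_{\rho_{can}}\,\sigma_{Ar}\,\Delta_{\rho_{can}}V$ can be integrated by parts and then evaluated by means of the conformal change of scalar curvature $R_{\rho_{Ar}}=e^{-2\sigma_{Ar}}(R_{\rho_{can}}+2\Delta_{\rho_{can}}\sigma_{Ar})$, which combined with $R_{\rho_{Ar}}=8\pi(1-g)e^{-2\sigma_{Ar}}$ gives
$$\Delta_{\rho_{can}}\sigma_{Ar}=\tfrac{1}{2}\bigl(8\pi(1-g)-R_{\rho_{can}}\bigr),$$
producing the expected $\int_{M}\mu_{\rho_{can}}(R_{\rho_{can}}-8\pi(1-g))V$ term; and the Dirichlet-energy piece $\int\ln W\,\Delta_{\rho_{Ar}}\ln W\,\mu_{\rho_{Ar}}$ is conformally invariant and equals $\int\ln W\,\Delta_{\rho_{can}}\ln W\,\mu_{\rho_{can}}$.

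Subtracting the two expressions, every $\sigma_{Ar}$-dependent, $k\ln k$, and $\ln k$ term cancels, and the surviving $V$-dependent terms match the right-hand side of the theorem, with Proposition \ref{FormuleVarLapMag} providing the $O(1/k)$ remainder. I expect the only real obstacle to be the careful bookkeeping of the various $\ln W$-linear contributions coming separately from the $\tfrac{R_{\rho}}{6}\ln(B/2\pi)$ summand, from the mixed $\sigma_{Ar}$–$\ln W$ parts of the Dirichlet-energy summand (handled via self-adjointness of $\Delta_{\rho_{Ar}}$), and from the integration-by-parts of the $\sigma_{Ar}$–$V$ cross term, which must all collect into the single coefficient $\tfrac{1}{24\pi}(R_{\rho_{can}}-8\pi(1-g))$ multiplying $\ln W$ in the final answer. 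No further analytic input is required beyond Propositions \ref{FormuleVarLapMag} and the conformal identities recalled in Sections \ref{SectionGreen} and \ref{SectionArakelovMetric}.
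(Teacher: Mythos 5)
Your proposal follows exactly the route of the paper: it invokes Proposition \ref{FormuleVarLapMag} with $(\rho_{Ar},h)$ and $(\rho_{Ar},\hat{h}_{k,V})$, checks that admissibility plus the quasi-subharmonicity condition \eqref{ConditionV} give positivity of both magnetic fields, and then reduces everything to the explicit evaluation of $\mathcal{F}(\rho_{Ar},h)$ and $\mathcal{F}(\rho_{Ar},\hat{h}_{k,V})$ via the conformal identities ($\Delta_{\rho_{can}}\sigma_{Ar}=\tfrac{1}{2}(8\pi(1-g)-R_{\rho_{can}})$, $R_{\rho_{Ar}}\mu_{\rho_{Ar}}=8\pi(1-g)\mu_{\rho_{can}}$, conformal invariance of the Dirichlet energy), which is precisely how the paper obtains the two displayed expressions for $\mathcal{F}$ before subtracting. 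The computation and bookkeeping you describe are the same as in the paper's proof, so the proposal is correct and essentially identical in approach.
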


If $2\pi+\frac{1}{2}\Delta_{\rho_{can}}V>0$, we define the \emph{equilibrium volume form}, denoted $\mu_{V}$ by \begin{equation}\label{MuV}\mu_{V}:=\left(1+\frac{\Delta_{\rho_{can}}V}{4\pi} \right)\mu_{\rho_{can}}=:f_{V}\mu_{\rho_{can}}.\end{equation}

The equilibrium volume form satisfies: $$\int_{M}\mu_{V}(x)G_{\rho_{can}}(x,y)+\frac{1}{2}V(y)=2\pi\int_{M}\mu_{\rho_{can}}(x)V(x)=:c_{V}$$

and $$\int_{M}\mu_{V}=1.$$

Indeed: \begin{align*}
    \int_{M}\mu_{V}(x)G_{\rho_{can}}(x,y)&=\int_{M}\mu_{\rho_{can}}(x)\left(1+\frac{\Delta_{\rho_{can}}V(x)}{4\pi} \right)G_{\rho_{can}}(x,y) \\&=\frac{1}{4\pi}\int_{M}\mu_{\rho_{can}}(x)V(x)\Delta_{\rho_{can,x}}G_{\rho_{can}}(x,y) \\&=-\frac{V(y)}{2}+\frac{1}{2}\int_{M}\mu_{\rho_{can}}(x)V(x).
\end{align*}

With this notation, the equality in the preceding theorem can be restated as: 
    \begin{align*}
    \ln \det\square_{L,\rho_{Ar},\hat{h}_{k,V}}-\ln\det\square_{L,\rho_{Ar},h}&=k\left[\frac{1}{2}\int_{M}\mu_{V}\ln f_{V}+\frac{1}{8\pi}\int_{M}\mu_{\rho_{can}}(R_{\rho_{can}}-8\pi(1-g))V \right]\\[1.5mm]&+\frac{2}{3}(1-g)\int_{M}\mu_{\rho_{can}}\ln f_{V} +\frac{1}{24\pi}\int_{M}\mu_{\rho_{can}}\left(R_{\rho_{can}}-8\pi(1-g) \right)\ln f_{V}\\[1.5mm]&+\frac{1}{48\pi}\int_{M}\mu_{\rho_{can}}\ln (f_{V})\Delta_{\rho_{can}}\ln \left(f_{V} \right)+O\left(\frac{1}{k} \right).
    \end{align*}

\subsection{Computation of $\ln\det\square_{L,\rho_{Ar},h}$}
We consider $L\to M$ a line bundle of degree $k$ endowed with a Hermitian metric $h$ supposed to be admissible. Therefore, we have $$B(\rho_{Ar},h)=2\pi k e^{-2\sigma_{Ar}} ~~ \text{ and } ~~ B(\rho_{can},h)=2\pi k.$$ We can use the result of recalled in Theorem \ref{ThFinski} to compute $\ln\det\square_{L,\rho_{can},h}$ and then use the Proposition \ref{FormuleVarLapMag} to obtain $\ln\det\square_{L,\rho_{Ar},h}.$ We have $$\ln \det \square_{L,\rho_{can},h}=-\frac{k}{2}\ln k-\frac{k}{2}\ln2 +\frac{2(g-1)}{3}\ln k - \frac{1-g}{12}(24\zeta'(-1)+2\ln2\pi+7+8\ln2)+O\left(\frac{\ln k}{k} \right)$$
and we have $$\mathcal{F}(\rho_{can},h)=\frac{1}{2}k\ln k +\frac{2}{3}(1-g)\ln k. $$

Hence 
\begin{align*}\ln\det\square_{L,\rho_{Ar},h}&=\ln\det\square_{L,\rho_{can},h}+\mathcal{F}(\rho_{can},h)-\mathcal{F}(\rho_{Ar},h)\\&=-\frac{k}{2}\ln k +k\left(\int_{M}\mu_{\rho_{can}}\sigma_{Ar}-\frac{\ln2}{2} \right)+\frac{2(g-1)}{3}\ln k- \frac{1-g}{12}(24\zeta'(-1)+2\ln2\pi+7+8\ln2)\\&+\frac{4}{3}(1-g)\int_{M}\mu_{\rho_{can}}\sigma_{Ar}-\frac{1}{12\pi}\int_{M}\mu_{\rho_{Ar}}\sigma_{Ar}\Delta_{Ar}\sigma_{Ar} +O\left(\frac{\ln k}{k} \right).
\end{align*}
Moreover, we have from Equation \eqref{Eq:IntSigmaArDelta-PsiP} $$\int_{M} \mu_{Ar} \sigma_{Ar}\Delta_{Ar}\sigma_{Ar}=\Psi_{P}(\rho_{can})$$ and from Equation \eqref{Eq:IntSigmaAr-PsiP-SAY} $$\int_{M} \mu_{\rho_{can}}\sigma_{Ar}=2\pi(1-g)S_{AY}(\hat{\phi}_{Ar},\rho_{can})-\frac{1}{4}S_{M}(-\sigma_{Ar},-\phi_{Ar},\rho_{Ar}),$$ where $\hat{\phi}_{Ar}$ is defined in Equation \eqref{PhiHat}.
Therefore we finally obtain:
\begin{theorem}\label{DevAsympLapMagAdm}
    With $h$ an admissible metric 
    \begin{align*}
        \ln\det\square_{L,\rho_{Ar},h}&=-\frac{k}{2}\ln k +k\left(2\pi(1-g)S_{AY}(\hat{\phi}_{Ar},\rho_{can})-\frac{1}{4}S_{M}(-\sigma_{Ar},-\phi_{Ar},\rho_{Ar})-\frac{\ln2}{2}\right) +\frac{2(g-1)}{3}\ln k\\&- \frac{1-g}{12}(24\zeta'(-1)+2\ln2\pi+7+8\ln2)+\frac{\Psi_{P}(\rho_{can})-S_{L}(-\sigma_{Ar},\rho_{Ar})}{6\pi}-\frac{\Psi_{P}(\rho_{can})}{12\pi} +O\left(\frac{\ln k}{k} \right).
    \end{align*}
\end{theorem}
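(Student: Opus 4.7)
The plan is to start from Finski's asymptotic expansion (Theorem \ref{ThFinski}) applied at the canonical metric and then transport the expansion to the Arakelov metric via the anomaly formula of Proposition \ref{FormuleVarLapMag}. The reason for the detour through $\rho_{can}$ is that Finski's formulas for the constant terms $\alpha_1, \beta_1$ are given under the hypothesis $B=2\pi k$, i.e.\ $b=1$, which is exactly what an admissible metric $h$ produces on $(M,\rho_{can})$ since $B(\rho_{can},h)=2\pi k$. For $(\rho_{Ar},h)$ one only has $B(\rho_{Ar},h)=2\pi k e^{-2\sigma_{Ar}}$, so $b\neq 1$ in general and Finski's formula does not directly furnish the constant coefficients.

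Concretely, first I apply Theorem \ref{ThFinski} at $(\rho_{can},h)$ with $b=1$, which yields
$$\ln\det\square_{L,\rho_{can},h}=-\tfrac{k}{2}\ln k-\tfrac{k\ln 2}{2}+\tfrac{2(g-1)}{3}\ln k-\tfrac{1-g}{12}\bigl(24\zeta'(-1)+2\ln 2\pi+7+8\ln 2\bigr)+O\bigl(\tfrac{\ln k}{k}\bigr).$$
Next I invoke Proposition \ref{FormuleVarLapMag} to write $\ln\det\square_{L,\rho_{Ar},h}=\ln\det\square_{L,\rho_{can},h}+\mathcal{F}(\rho_{can},h)-\mathcal{F}(\rho_{Ar},h)+O(1/k)$. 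Using the Remark following Theorem \ref{ThFinski}, $\mathcal{F}(\rho_{can},h)=\tfrac{k}{2}\ln k+\tfrac{2(1-g)}{3}\ln k$. For $\mathcal{F}(\rho_{Ar},h)$ I use the formula with $b=e^{-2\sigma_{Ar}}$: the identities $b\,\mu_{\rho_{Ar}}=\mu_{\rho_{can}}$, $\ln b=-2\sigma_{Ar}$, and $\Delta_{\rho_{Ar}}\ln b=-2\Delta_{\rho_{Ar}}\sigma_{Ar}$ reduce the three summands to $-k\int_M \mu_{\rho_{can}}\sigma_{Ar}$, $-\tfrac{4(1-g)}{3}\int_M\mu_{\rho_{can}}\sigma_{Ar}$, and $\tfrac{1}{12\pi}\int_M\mu_{\rho_{Ar}}\sigma_{Ar}\Delta_{\rho_{Ar}}\sigma_{Ar}$ respectively (the $\ln k$ contributions of $\mathcal{F}(\rho_{can},h)$ and $\mathcal{F}(\rho_{Ar},h)$ cancel).

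Finally I substitute the two closed expressions from Section \ref{Section:ArakelovCanonical}: Equation \eqref{Eq:IntSigmaArDelta-PsiP} gives $\int_M\mu_{\rho_{Ar}}\sigma_{Ar}\Delta_{\rho_{Ar}}\sigma_{Ar}=\Psi_P(\rho_{can})$, and Equation \eqref{Eq:IntSigmaAr-PsiP-SAY} replaces $\int_M\mu_{\rho_{can}}\sigma_{Ar}$ by $2\pi(1-g)S_{AY}(\hat\phi_{Ar},\rho_{can})-\tfrac14 S_M(-\sigma_{Ar},-\phi_{Ar},\rho_{Ar})$ in the $k$-linear term, while the $O(1)$ occurrence of the same integral is rewritten as $(\Psi_P(\rho_{can})-S_L(-\sigma_{Ar},\rho_{Ar}))/(8\pi(1-g))$ so that the coefficient $\tfrac{4(1-g)}{3}$ produces the stated $\tfrac{\Psi_P(\rho_{can})-S_L(-\sigma_{Ar},\rho_{Ar})}{6\pi}$ term. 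Collecting everything reproduces the claimed expansion with remainder $O(\ln k/k)$, since this is the worst error coming from Finski's theorem (the anomaly formula contributes only $O(1/k)$). The calculation is essentially mechanical; the only point requiring care is the correct bookkeeping of the two different rewritings of $\int_M\mu_{\rho_{can}}\sigma_{Ar}$ (one at order $k$ in the form involving $S_{AY}$ and $S_M$, and one at order $1$ in the form involving $\Psi_P$ and $S_L$) so that the constant term assembles cleanly, which is why I expect this recombination step to be the most error-prone.
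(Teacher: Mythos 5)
Your proposal is correct and follows essentially the same route as the paper: apply Finski's expansion at $(\rho_{can},h)$ where $B=2\pi k$, transfer to $(\rho_{Ar},h)$ via Proposition \ref{FormuleVarLapMag} with $\mathcal{F}(\rho_{can},h)-\mathcal{F}(\rho_{Ar},h)$ computed from $b=e^{-2\sigma_{Ar}}$, and then substitute Equations \eqref{Eq:IntSigmaArDelta-PsiP} and \eqref{Eq:IntSigmaAr-PsiP-SAY} with exactly the two rewritings of $\int_{M}\mu_{\rho_{can}}\sigma_{Ar}$ you describe. The bookkeeping you flag (coefficient $\tfrac{4(1-g)}{3}\cdot\tfrac{1}{8\pi(1-g)}=\tfrac{1}{6\pi}$, remainder $O(\ln k/k)$ dominated by Finski's theorem) matches the paper's computation.
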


\subsection{Final result}
In this section we will deduce from the preceding sections the desired results about the free energy of our system.
\subsubsection{Expansion of the modified partition function}
We start by computing the asymptotic expansion of the modified partition function (see Theorems \ref{TheoremFinalK} and \ref{TheoremFinalN}) which was defined in Equation \eqref{ModifPartTheta}. We will then deduce the expansion of the partition function (see Theorem \ref{ThéorèmeFinalSansTheta}).

Combining all the previous results we obtain the following expansion for the modified partition function: 
\begin{theorem}\label{TheoremeModifPreFinal}
    Let $L \to M$ be a line bundle of degree $k$, with $k=N+1-g$, $h$ be an admissible metric on $L$, and $V$ be a smooth potential such that: $2\pi+\frac{1}{2}\Delta_{\rho_{can}}V>0.$ Then, the modified partition function satisfies:
    \begin{align*}
    \ln\mathcal{Z}_{\rho_{Ar},{\rho_{can}},N}^{(\theta)}(V)=\ln \mathcal{Z}_{\rho_{Ar},{\rho_{can}},N}^{(\theta)}(0)+\ln \det\square_{L,\rho_{Ar},\hat{h}_{k,V}}-\ln \det\square_{L,\rho_{Ar},h}+k^{2}S_{\infty}(V)+\frac{k}{2}S_{1}(0,-V,\rho_{Ar},h)
    \end{align*} where $\ln \det\square_{L,\rho_{Ar},\hat{h}_{k,V}}-\ln \det\square_{L,\rho_{Ar},h}$ is given in Theorem \ref{DifférenceLaplacienMagPotentiel}, and
    \begin{align*}
        \ln \mathcal{Z}_{\rho_{Ar},\rho_{can},N}^{(\theta)}(0)&=-\frac{k}{2}\ln k +k\left(2\pi (1-g)S_{AY}(\hat{\phi}_{Ar},\rho_{can})-\frac{1}{4}S_{M}(-\sigma_{Ar},-\phi_{Ar},\rho_{Ar})+\ln2\pi -\frac{\ln2}{2}\right)+\frac{2(g-1)}{3}\ln k \\&-(1-g)\left(-4\zeta'(-1) -\frac{4}{3}\ln2\pi +\frac{5}{6}+\frac{\ln2}{6}\right)+\frac{\Psi_{P}(\rho_ {can})-S_{L}(-\sigma_{Ar},\rho_{Ar})}{6\pi}-\frac{\Psi_{P}(\rho_{can})}{12\pi}\\&+\frac{1}{2}\ln \left(\frac{\det\Delta_{\rho_{Ar}}}{\vol(M,\rho_{Ar})\det \mathfrak{Im}(\tau)} \right)+O\left(\frac{\ln k}{k} \right).
\end{align*}
\end{theorem}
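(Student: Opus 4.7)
The plan is to observe that this theorem is essentially a bookkeeping combination of three ingredients already at our disposal, with no new analytical input required. The first displayed equation (for general $V$) is exactly the content of Theorem \ref{TheoremPartitionGénéral}, specifically Equation \eqref{EqExactGeneral}; it holds as an exact identity, so nothing needs to be done for it beyond restating it. Hence the entire task reduces to establishing the second display, namely the asymptotic expansion of $\ln \mathcal{Z}_{\rho_{Ar},\rho_{can},N}^{(\theta)}(0)$.

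For that, I would start from the exact formula \eqref{EqExactGeneralV=0}, which expresses $\ln \mathcal{Z}_{\rho_{Ar},\rho_{can},N}^{(\theta)}(0)$ as the sum of $\ln\det\square_{L,\rho_{Ar},h}$, $-\ln B_{g,k}$, and the log of the zeta determinant ratio. Then I substitute the asymptotic expansion of $\ln\det\square_{L,\rho_{Ar},h}$ provided by Theorem \ref{DevAsympLapMagAdm} together with the explicit value of $\ln B_{g,k}$ recorded in Equation \eqref{lnBgk}, namely $\ln B_{g,k}=-k\ln 2\pi+\frac{1-g}{2}(\frac{1}{2}-12\zeta'(-1)-3\ln 2\pi-\ln 2)$. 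The term $\tfrac{1}{2}\ln(\det\Delta_{\rho_{Ar}}/(\vol(M,\rho_{Ar})\det\mathfrak{Im}(\tau)))$ is simply transported as is, giving the last line of the stated expansion, while the $O(\ln k/k)$ remainder is inherited from the expansion of $\ln\det\square_{L,\rho_{Ar},h}$.

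The only step that requires actual calculation is collecting terms by powers of $k$. The leading coefficient $-k/2$ in front of $\ln k$ and the $\frac{2(g-1)}{3}\ln k$ term come directly from Theorem \ref{DevAsympLapMagAdm} and are unaffected by $-\ln B_{g,k}$. The coefficient of $k$ receives contribution $+\ln 2\pi$ from $-\ln B_{g,k}$, to be added to $2\pi(1-g)S_{AY}(\hat\phi_{Ar},\rho_{can})-\tfrac14 S_M(-\sigma_{Ar},-\phi_{Ar},\rho_{Ar})-\tfrac{\ln 2}{2}$ from Theorem \ref{DevAsympLapMagAdm}, giving precisely the $k$-coefficient stated.

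The main (and only) piece of real arithmetic is checking that the $(1-g)$-proportional constant term assembles correctly. One combines $-\frac{1-g}{12}(24\zeta'(-1)+2\ln 2\pi+7+8\ln 2)$ from Theorem \ref{DevAsympLapMagAdm} with $-\frac{1-g}{2}(\frac{1}{2}-12\zeta'(-1)-3\ln 2\pi-\ln 2)$ coming from $-\ln B_{g,k}$; grouping the $\zeta'(-1)$, $\ln 2\pi$, $\ln 2$, and rational parts separately yields $-(1-g)(-4\zeta'(-1)-\tfrac{4}{3}\ln 2\pi+\tfrac{5}{6}+\tfrac{\ln 2}{6})$, as asserted. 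The two curvature-potential contributions $\frac{\Psi_P(\rho_{can})-S_L(-\sigma_{Ar},\rho_{Ar})}{6\pi}-\frac{\Psi_P(\rho_{can})}{12\pi}$ are simply carried over from Theorem \ref{DevAsympLapMagAdm}. There is no genuine obstacle here; the only risk is an arithmetic slip in combining the various $\zeta'(-1)$, $\ln 2\pi$ and $\ln 2$ constants, which I would verify by writing the contributions in a small table before summing.
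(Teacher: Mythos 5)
Your proposal is correct and follows exactly the paper's own proof: the first identity is Theorem \ref{TheoremPartitionGénéral} (Equation \eqref{EqExactGeneral}), and the expansion of $\ln \mathcal{Z}_{\rho_{Ar},\rho_{can},N}^{(\theta)}(0)$ is obtained by inserting Theorem \ref{DevAsympLapMagAdm} and Equation \eqref{lnBgk} into Equation \eqref{EqExactGeneralV=0} and collecting terms. The constant-term arithmetic you outline indeed yields $-(1-g)\bigl(-4\zeta'(-1)-\tfrac{4}{3}\ln 2\pi+\tfrac{5}{6}+\tfrac{\ln 2}{6}\bigr)$, matching the statement.
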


\begin{proof}
    All we need to do to obtain our result is to combine Theorems \ref{TheoremPartitionGénéral} and \ref{DevAsympLapMagAdm}. 
    We have already proved the first equality (see Theorem \ref{TheoremPartitionGénéral}). We will prove the result for $\ln \mathcal{Z}_{\rho_{Ar},\rho_{can,N}}^{(\theta)}(0).$ We also know that (see Theorem \ref{TheoremPartitionGénéral}): $$\ln \mathcal{Z}_{\rho_{Ar},\rho_{can,N}}^{(\theta)}(0)=\ln\det\square_{L,\rho_{Ar},h}-\ln B_{g,k}+\frac{1}{2}\ln \left(\frac{\det \Delta_{\rho_{Ar}}}{\vol(M,\rho_{Ar})\det\mathfrak{Im}(\tau)} \right)$$ and (see Equation\eqref{lnBgk}) $$\ln B_{g,k}=-k\ln 2\pi +\frac{1-g}{2}\left(\frac{1}{2}-12\zeta'(-1)-3\ln2\pi-\ln2 \right)$$ and (see Theorem \ref{DevAsympLapMagAdm})   \begin{align*}
        \ln\det\square_{L,\rho_{Ar},h}&=-\frac{k}{2}\ln k +k\left(2\pi(1-g)S_{AY}(\hat{\phi}_{Ar},\rho_{can})-\frac{1}{4}S_{M}(-\sigma_{Ar},-\phi_{Ar},\rho_{Ar})-\frac{\ln2}{2}\right) +\frac{2(g-1)}{3}\ln k\\&- \frac{1-g}{12}(24\zeta'(-1)+2\ln2\pi+7+8\ln2)+\frac{\Psi_{P}(\rho_{can})-S_{L}(-\sigma_{Ar},\rho_{Ar})}{6\pi}-\frac{\Psi_{P}(\rho_{can})}{12\pi} +O\left(\frac{\ln k}{k} \right).
    \end{align*}
    Hence: 
    \begin{align*}
        \ln \mathcal{Z}_{\rho_{Ar},\rho_{can},N}^{(\theta)}(0)&=-\frac{k}{2}\ln k +k\left(2\pi (1-g)S_{AY}(\hat{\phi}_{Ar},\rho_{can})-\frac{1}{4}S_{M}(-\sigma_{Ar},-\phi_{Ar},\rho_{Ar})+\ln2\pi -\frac{\ln2}{2}\right)+\frac{2(g-1)}{3}\ln k \\&-(1-g)\left(-4\zeta'(-1) -\frac{4}{3}\ln2\pi +\frac{5}{6}+\frac{\ln2}{6}\right)+\frac{\Psi_{P}(\rho_ {can})-S_{L}(-\sigma_{Ar},\rho_{Ar})}{6\pi}-\frac{\Psi_{P}(\rho_{can})}{12\pi}\\&+\frac{1}{2}\ln \left(\frac{\det\Delta_{\rho_{Ar}}}{\vol(M,\rho_{Ar})\det \mathfrak{Im}(\tau)} \right)+O\left(\frac{\ln k}{k} \right).
    \end{align*}
\end{proof}
\begin{Example}
For the case of the sphere, $g=0$, this gives (using Equation \eqref{DetLapScaArSph} and the equalities of Section \ref{Section:ArakelovCanonical}): 
\begin{align*}
    \ln \mathcal{Z}_{\rho_{Ar},\rho_{can},N}^{(\theta)}(0)&=-\frac{k}{2}\ln k+k\left(\frac{1+3\ln2\pi-2\ln2}{2} \right)-\frac{2}{3}\ln k+\frac{3}{2}\ln2\pi-\ln2+2\zeta'(-1)-\frac{1}{12}+ O\left(\frac{\ln k}{k} \right).
\end{align*}

This is consistent with direct computations in this case (see \ref{DirectSphere}): 
\begin{align*}
    \ln \mathcal{Z}^{\mathbb{S}^{2}}_{\rho_{can},\rho_{Ar},N}(0)&=-\frac{1}{2}(k+1)\ln(k+1)+(k+1)\bigg(\frac{3}{2}\ln2\pi +\frac{1}{2}-\frac{2\ln2}{2}\bigg) -\frac{\ln(k+1)}{6}+2\zeta'(-1)-\frac{1}{12}+o(1)  \\&=-\frac{1}{2}k\ln k+k\left(\frac{3}{2}\ln2\pi+\frac{1-2\ln2}{2} \right)-\frac{2}{3}\ln k-\frac{1}{12}+\frac{3}{2}\ln2\pi +2\zeta'(-1)-\ln2+o(1).
\end{align*}

Similarly, for the case of the torus, $g=1$, this gives: 
\begin{align*}
    \ln \mathcal{Z}_{\rho_{Ar},\rho_{can},N}^{(\theta)}(0)=-\frac{k}{2}\ln k+k\ln\Big(2\pi^{2} \sqrt{\mathfrak{Im}(\tau)}|\eta(\tau)|^{2}\Big)+\frac{1}{2}\ln \big(|\eta(\tau)|^{4}\big)+O\left(\frac{\ln k}{k} \right)
\end{align*}
where we used Equation \eqref{DetLapScaArTore} and Section \ref{Section:ArakelovCanonical}. We can compute directly (see \ref{AnnexeDirectTore}):
\begin{align*}
    \ln \mathcal{Z}_{\rho_{Ar},\rho_{can},N}^{(\theta)}(0)&=-\frac{1}{2}k\ln k+\ln\big(2\pi^{2}\sqrt{\mathfrak{Im}(\tau)}|\eta(\tau)|^{2} \big)k  +\frac{1}{2}\ln(|\eta(\tau)|^{4}).
\end{align*}
Consequently, the preceding formula is validated for the cases $g=0,1$.
\end{Example}
Putting everything together, we obtain the following theorem: 
\begin{theorem}\label{TheoremFinalK}
Under the same assumptions as in Theorem \ref{TheoremeModifPreFinal}, the modified partition function satisfies    \begin{equation}
        \ln \mathcal{Z}_{\rho_{Ar},\rho_{can},N}^{(\theta)}(V)=\beta_{2}k^{2}+\alpha_{1}k\ln k+\beta_{1} k+\alpha_{0}\ln k+\beta_{0}+O(k^{-1}\ln k)
    \end{equation}
    with: 
    \begin{align*}
        \beta_{2}&=S_{\infty}(V),~~~~\alpha_{1}=-\frac{1}{2},~~~~\alpha_{0}=\frac{2(g-1)}{3}=-\frac{\chi(M)}{3} \\\beta_{1}&=2\pi(1-g)S_{AY}(\hat{\phi}_{Ar},\rho_{can})-\frac{1}{4}S_{M}(-\sigma_{Ar},-\phi_{Ar},\rho_{Ar})+\ln2\pi-\frac{\ln2}{2}\\[1.2mm]&+\frac{1}{2}\left(\int_{M}\mu_{V}\ln f_{V}+\int_{M}\mu_{\rho_{can}}R_{\rho_{can}}\frac{V}{4\pi} \right)
    \end{align*}
        and 
\begin{align*}
            \beta_{0}&=(1-g)\left(4\zeta'(-1) +\frac{4}{3}\ln2\pi -\frac{5}{6}-\frac{\ln2}{6}\right)+\frac{\Psi_{P}(\rho_ {can})-S_{L}(-\sigma_{Ar},\rho_{Ar})}{6\pi}-\frac{\Psi_{P}(\rho_{can})}{12\pi}\\[1.5mm]&+\frac{2}{3}(1-g)\int_{M}\mu_{\rho_{can}}\ln f_{V} +\frac{1}{24\pi}\int_{M}\mu_{\rho_{can}}\left(R_{\rho_{can}}-8\pi(1-g) \right)\ln f_{V}\\[1.5mm]&+\frac{1}{48\pi}\int_{M}\mu_{\rho_{can}}\ln (f_{V})\Delta_{\rho_{can}}\ln \left(f_{V} \right)+\frac{1}{2}\ln \left(\frac{\det\Delta_{\rho_{Ar}}}{\vol(M,\rho_{Ar})\det \mathfrak{Im}(\tau)} \right)
        \end{align*}
where $\chi(M)$ is the Euler characteristic of the Riemann surface $M$, $S_{2}$ and $S_{1}$ are defined in Definition \ref{DefFonctionnelleS}, $S_{\infty}$ is defined in Equation \eqref{Sinfini}, $S_{AY}$, $S_{M}$ and $S_{L}$ are defined in Definition \ref{DefLiouvMabAubYau}, $\Psi_{P}$ is defined in Definition \ref{PotRicci} and $\mu_{V}$ and $f_{V}$ are defined in Equation \eqref{MuV}. 
\end{theorem}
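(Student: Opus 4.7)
The plan is to assemble the result by directly combining the three building blocks already established: the exact identity of Theorem \ref{TheoremeModifPreFinal}, the expansion of the determinant ratio in Theorem \ref{DifférenceLaplacienMagPotentiel}, and the already-computed expansion of $\ln \mathcal{Z}_{\rho_{Ar},\rho_{can},N}^{(\theta)}(0)$ that appears in Theorem \ref{TheoremeModifPreFinal}. No new analytic input is required; the work is entirely bookkeeping of orders in $k$ and a single algebraic identification.

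First I would write down the exact identity
\[
\ln\mathcal{Z}_{\rho_{Ar},\rho_{can},N}^{(\theta)}(V)=\ln\mathcal{Z}_{\rho_{Ar},\rho_{can},N}^{(\theta)}(0)+\bigl(\ln\det\square_{L,\rho_{Ar},\hat h_{k,V}}-\ln\det\square_{L,\rho_{Ar},h}\bigr)+k^{2}S_{2}(V,\rho_{Ar},h)+\tfrac{k}{2}S_{1}(0,V,\rho_{Ar},h),
\]
and read off the $k^{2}$ coefficient immediately as $\beta_{2}=S_{2}(V,\rho_{Ar},h)$. Next I would extract $\alpha_{1}=-\tfrac{1}{2}$ and $\alpha_{0}=\tfrac{2(g-1)}{3}$ directly from the $-\tfrac{k}{2}\ln k$ and $\tfrac{2(g-1)}{3}\ln k$ terms of $\ln\mathcal{Z}^{(\theta)}(0)$, noting that Theorem \ref{DifférenceLaplacienMagPotentiel} contributes nothing at these orders and that the $k^{2}$ and $k$ terms of the two $\mathcal{F}$-functionals cancel in the determinant difference as expected.

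The only step that requires a genuine computation is identifying $\beta_{1}$. I would collect the three $k$-order contributions: from $\ln \mathcal{Z}^{(\theta)}(0)$ the term $2\pi(1-g)S_{AY}(\hat\phi_{Ar},\rho_{can})-\tfrac{1}{4}S_{M}(-\sigma_{Ar},-\phi_{Ar},\rho_{Ar})+\ln 2\pi-\tfrac{\ln 2}{2}$; from Theorem \ref{DifférenceLaplacienMagPotentiel} the term $\tfrac{1}{2}\int_{M}\mu_{V}\ln f_{V}+\tfrac{1}{8\pi}\int_{M}\mu_{\rho_{can}}(R_{\rho_{can}}-8\pi(1-g))V$; and finally $\tfrac{1}{2}S_{1}(0,V,\rho_{Ar},h)$. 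Using Definition \ref{DefFonctionnelleS} with $\sigma=0$, $\psi=V$, this last term equals $-\tfrac{1}{8\pi}\int_{M}\mu_{\rho_{Ar}}V R_{\rho_{Ar}}$, and the key identity $R_{\rho_{Ar}}\mu_{\rho_{Ar}}=8\pi(1-g)\mu_{\rho_{can}}$ (which follows from $R_{\rho_{Ar}}=8\pi(1-g)e^{-2\sigma_{Ar}}$ recalled in Section \ref{Section:ArakelovCanonical}) rewrites it as $-\int_{M}\mu_{\rho_{can}}(1-g)V$. Combining this with the $-8\pi(1-g)V$ term from the determinant difference yields exactly the factor $R_{\rho_{can}}-16\pi(1-g)$ that appears in the statement, producing $\tfrac{1}{2}\int_{M}\mu_{\rho_{can}}(R_{\rho_{can}}-16\pi(1-g))\tfrac{V}{4\pi}$.

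Finally, $\beta_{0}$ is obtained by collecting everything of order $1$: the constant part of $\ln\mathcal{Z}^{(\theta)}(0)$ (the Wentworth constants, the Polyakov-type combination, and the scalar-determinant logarithm factor), together with the three constant-order integrals from Theorem \ref{DifférenceLaplacienMagPotentiel} rewritten via $\mu_{V}=f_{V}\mu_{\rho_{can}}$. The remainder $O(k^{-1}\ln k)=O(N^{-1}\ln N)$ is inherited from Proposition \ref{FormuleVarLapMag} and Theorem \ref{ThFinski}. The main obstacle is exclusively bookkeeping: keeping track of signs and the split between $\mu_{\rho_{Ar}}$- and $\mu_{\rho_{can}}$-integrals, with the only nontrivial simplification being the conversion of the $S_{1}$ contribution described above. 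Everything else is substitution.
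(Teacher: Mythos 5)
Your proposal is correct and follows essentially the same route as the paper: Theorem \ref{TheoremFinalK} is obtained there simply by combining the exact identity and the expansion of $\ln\mathcal{Z}^{(\theta)}_{\rho_{Ar},\rho_{can},N}(0)$ from Theorem \ref{TheoremeModifPreFinal} with Theorem \ref{DifférenceLaplacienMagPotentiel}, and your explicit reduction $\frac{k}{2}S_{1}(0,V,\rho_{Ar},h)=-k(1-g)\int_{M}\mu_{\rho_{can}}V$ via $R_{\rho_{Ar}}\mu_{\rho_{Ar}}=8\pi(1-g)\mu_{\rho_{can}}$ is exactly the (implicit) step that turns the factor $R_{\rho_{can}}-8\pi(1-g)$ into $R_{\rho_{can}}-16\pi(1-g)$ in $\beta_{1}$. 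One small slip: you say the ``$k^{2}$ and $k$ terms of the two $\mathcal{F}$-functionals cancel,'' whereas it is the $k\ln k$ and $\ln k$ terms that cancel in the difference (the order-$k$ parts do not cancel and are precisely what feeds the integrals in $\beta_{1}$, which you in fact use correctly afterwards).
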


Now using the fact that $k=N+g-1$, we can see easily that we have the following asymptotic expansion in $N$.
\begin{theorem}\label{TheoremFinalN}
    With the same conditions as before: 
    \begin{equation}
\ln \mathcal{Z}_{\rho_{Ar},\rho_{can},N}^{(\theta)}(V)=b_{2}N^{2}+a_{1}N\ln N+b_{1} N+a_{0}\ln N+b_{0}+O(N^{-1}\ln N)
    \end{equation}
    with: \begin{align*}
        b_{2}&=\beta_{2}, ~~~~a_{1}=\alpha_{1}, ~~~~b_{1}=2(g-1)\beta_{2}+\beta_{1},~~~~ a_{0}=\alpha_{0}+(g-1)\alpha_{1} \\b_{0}&=(g-1)^{2}\beta_{2}+(g-1)\alpha_{1}+(g-1)\beta_{1}+\beta_{0}
    \end{align*}
    where the $\alpha_{i}$ and $\beta_{i}$ are given in Theorem \ref{TheoremFinalK} above.
\end{theorem}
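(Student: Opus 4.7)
The plan is to substitute $k = N + g - 1$ directly into the asymptotic expansion from Theorem \ref{TheoremFinalK} and re-expand the logarithmic terms in powers of $1/N$. Since the coefficients $\alpha_i$ and $\beta_i$ are constants independent of $k$ (they depend only on $M$, the metric data and $V$, as is apparent from their explicit formulae in Theorem \ref{TheoremFinalK}), this is entirely a matter of bookkeeping.

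First I would record the elementary expansions
\begin{align*}
k^{2} &= N^{2} + 2(g-1)N + (g-1)^{2}, \\
k &= N + (g-1), \\
\ln k &= \ln N + \ln\!\left(1 + \tfrac{g-1}{N}\right) = \ln N + \tfrac{g-1}{N} + O(N^{-2}), \\
k \ln k &= (N + g - 1)\bigl(\ln N + \tfrac{g-1}{N} + O(N^{-2})\bigr) = N \ln N + (g-1)\ln N + (g-1) + O(N^{-1}).
\end{align*}
The remainder converts as $O(k^{-1}\ln k) = O(N^{-1}\ln N)$ since $k$ and $N$ differ by a fixed constant.

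Then I would substitute these into the expansion of Theorem \ref{TheoremFinalK} and collect terms by order. The $N^{2}$ term comes only from $\beta_{2}k^{2}$, giving $b_{2} = \beta_{2}$. The $N\ln N$ term comes only from $\alpha_{1}k\ln k$, giving $a_{1} = \alpha_{1}$. The linear-in-$N$ contributions arise from $\beta_{2}k^{2}$ and $\beta_{1}k$, yielding $b_{1} = 2(g-1)\beta_{2} + \beta_{1}$. The $\ln N$ contributions come from $\alpha_{1}k\ln k$ and $\alpha_{0}\ln k$, yielding $a_{0} = \alpha_{0} + (g-1)\alpha_{1}$. Finally the constant term accumulates contributions from $\beta_{2}k^{2}$, $\alpha_{1}k\ln k$, $\beta_{1}k$ and $\beta_{0}$, giving $b_{0} = (g-1)^{2}\beta_{2} + (g-1)\alpha_{1} + (g-1)\beta_{1} + \beta_{0}$, which matches the statement.

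There is no genuine mathematical obstacle here; the only care needed is to verify that the subleading pieces from expanding $\ln k$ around $\ln N$ are absorbed into $O(N^{-1}\ln N)$, which is immediate because each such piece is at most $O(N^{-1})$ times a polynomial in $\ln N$ of bounded degree. Since all of the preceding work in Theorem \ref{TheoremFinalK} has already been done, this final theorem is essentially a corollary obtained by the change of variable $k \mapsto N$.
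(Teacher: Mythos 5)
Your proposal is correct and is exactly the paper's route: the paper obtains Theorem \ref{TheoremFinalN} from Theorem \ref{TheoremFinalK} by the same substitution $k=N+g-1$ and re-expansion of $k^{2}$, $k\ln k$ and $\ln k$ in powers of $1/N$, with the remainder absorbed into $O(N^{-1}\ln N)$. Your bookkeeping of the resulting coefficients $b_{2},a_{1},b_{1},a_{0},b_{0}$ matches the stated values.
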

In particular, for the simpler case $V=0$ where the potential is null (which clearly satisfies the condition of Equation \eqref{ConditionV}): 
\begin{Corollary}
The modified partition function with potential $V=0$ satisfies: 
    \begin{align*}
\ln \mathcal{Z}_{\rho_{Ar},\rho_{can},N}^{(\theta)}(0)&=-\frac{N}{2}\ln N +N\left(2\pi(1-g)S_{AY}(\hat{\phi}_{Ar},\rho_{can})-\frac{1}{4}S_{M}(-\sigma_{Ar},-\phi_{Ar},\rho_{Ar})+\ln2\pi-\frac{\ln2}{2}\right) +\frac{(g-1)}{6}\ln N\\&+ (1-g)\left(4\zeta'(-1)+\frac{1}{3}\ln2\pi-\frac{1}{3}+\frac{1}{3}\ln2\right)-\frac{\Psi_{P}(\rho_{can})}{24\pi} +\frac{1}{2}\ln\left(\frac{\det\Delta_{\rho_{can}}}{\det \mathfrak{Im}(\tau)}\right) +O\left(\frac{\ln N}{N} \right)
    \end{align*}
\end{Corollary}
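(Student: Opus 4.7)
The plan is to specialize Theorem \ref{TheoremFinalN} to $V=0$ and simplify each coefficient. With $V=0$ we have $f_{V}=1$ and $\mu_{V}=\mu_{\rho_{can}}$, so Definition \ref{DefFonctionnelleS} immediately yields $S_{2}(0,\rho_{Ar},h)=0$ and hence $\beta_{2}=0$. Consequently $b_{2}=0$, removing the $N^{2}$ contribution. The logarithmic coefficients are immediate: $a_{1}=\alpha_{1}=-\tfrac{1}{2}$ and $a_{0}=\alpha_{0}+(g-1)\alpha_{1}=\tfrac{2(g-1)}{3}-\tfrac{g-1}{2}=\tfrac{g-1}{6}$. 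The linear coefficient reduces to $b_{1}=\beta_{1}=2\pi(1-g)S_{AY}(\hat{\phi}_{Ar},\rho_{can})-\tfrac{1}{4}S_{M}(-\sigma_{Ar},-\phi_{Ar},\rho_{Ar})+\ln 2\pi-\tfrac{\ln 2}{2}$, since all terms in Theorem \ref{TheoremFinalK} involving $V$ or $\ln f_{V}$ vanish.

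The substantive work is the constant term $b_{0}=(g-1)\alpha_{1}+(g-1)\beta_{1}+\beta_{0}=\tfrac{1-g}{2}+(g-1)\beta_{1}+\beta_{0}$. To make the Arakelov scalar determinant give way to the canonical one, I would apply Proposition \ref{VarDetSca} with $\rho=\rho_{Ar}=e^{2\sigma_{Ar}}\rho_{can}$ and use $\vol(M,\rho_{can})=1$ together with the cocycle identity $S_{L}(\sigma_{Ar},\rho_{can})=-S_{L}(-\sigma_{Ar},\rho_{Ar})$ (first displayed equation after Definition \ref{DefLiouvMabAubYau}) to obtain
\[
\tfrac{1}{2}\ln\!\Bigl(\tfrac{\det\Delta_{\rho_{Ar}}}{\vol(M,\rho_{Ar})\det\mathfrak{Im}(\tau)}\Bigr)=\tfrac{S_{L}(-\sigma_{Ar},\rho_{Ar})}{24\pi}+\tfrac{1}{2}\ln\!\Bigl(\tfrac{\det\Delta_{\rho_{can}}}{\det\mathfrak{Im}(\tau)}\Bigr).
\]
Next, to remove $S_{AY}(\hat{\phi}_{Ar},\rho_{can})$ and $S_{M}(-\sigma_{Ar},-\phi_{Ar},\rho_{Ar})$ from $(g-1)\beta_{1}$, I would invoke the two identities from Section \ref{Section:ArakelovCanonical}: $\Psi_{P}(\rho_{Ar})=16\pi^{2}(1-g)^{2}S_{AY}(\hat{\phi}_{Ar},\rho_{can})$ and $\Psi_{P}(\rho_{can})-S_{L}(-\sigma_{Ar},\rho_{Ar})=\Psi_{P}(\rho_{Ar})-2\pi(1-g)S_{M}(-\sigma_{Ar},-\phi_{Ar},\rho_{Ar})$, expressing the geometric functionals purely in terms of $\Psi_{P}(\rho_{can})$ and $S_{L}(-\sigma_{Ar},\rho_{Ar})$.

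Once everything is rewritten, all $\Psi_{P}(\rho_{Ar})$ terms cancel by construction, the $S_{L}(-\sigma_{Ar},\rho_{Ar})$ contributions from $(g-1)\beta_{1}$ and from the Arakelov-to-canonical conversion combine with those already present in $\beta_{0}$ with coefficient $\bigl(-\tfrac{1}{6\pi}+\tfrac{1}{24\pi}+\tfrac{1}{8\pi}\bigr)=0$, and the $\Psi_{P}(\rho_{can})$ terms collapse to coefficient $\tfrac{1}{12\pi}-\tfrac{1}{8\pi}=-\tfrac{1}{24\pi}$. The remaining scalar multiples of $(1-g)$ collect to $(1-g)\bigl(4\zeta'(-1)+\tfrac{1}{3}\ln 2\pi-\tfrac{1}{3}+\tfrac{1}{3}\ln 2\bigr)$, matching the claimed expression.

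The real obstacle is purely algebraic bookkeeping across the functionals $S_{L},S_{M},S_{AY},\Psi_{P}$, because each enters in several places with different normalizations; writing them all in terms of $\Psi_{P}(\rho_{can})$ and $S_{L}(-\sigma_{Ar},\rho_{Ar})$ before summing is what makes the cancellations visible. The $g=0,1$ computations displayed in the Example below Theorem \ref{TheoremeModifPreFinal} furnish a valuable consistency check that no numerical constant is misplaced.
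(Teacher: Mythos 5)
Your proposal is correct and follows the same route the paper intends for this corollary: specialize Theorem \ref{TheoremFinalN} (equivalently Theorem \ref{TheoremeModifPreFinal}) to $V=0$, convert the Arakelov determinant via Proposition \ref{VarDetSca} with $\vol(M,\rho_{can})=1$ and the cocycle identity, and absorb $(g-1)\beta_{1}$ using $\Psi_{P}(\rho_{Ar})=16\pi^{2}(1-g)^{2}S_{AY}(\hat{\phi}_{Ar},\rho_{can})$ and $\Psi_{P}(\rho_{can})-S_{L}(-\sigma_{Ar},\rho_{Ar})=\Psi_{P}(\rho_{Ar})-2\pi(1-g)S_{M}(-\sigma_{Ar},-\phi_{Ar},\rho_{Ar})$. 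The cancellation coefficients you state ($-\tfrac{1}{6\pi}+\tfrac{1}{24\pi}+\tfrac{1}{8\pi}=0$ for $S_{L}$, $\tfrac{1}{12\pi}-\tfrac{1}{8\pi}=-\tfrac{1}{24\pi}$ for $\Psi_{P}(\rho_{can})$) and the collected $(1-g)$ constant all check out against the paper's expressions.
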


The constant term can also be rewritten using Faltings' delta invariant (see Equation \eqref{FaltingDeltaEq}). 
Indeed, 
$$\frac{1}{2}\ln \left(\frac{\det\Delta_{\rho_{Ar}}}{\vol(M,\rho_{Ar})} \right)=(1-g)\left(\frac{1}{12}-2\zeta'(-1)-\frac{1}{2}\ln2\pi -\frac{1}{6}\ln2 \right)-\frac{2}{3}g\ln2\pi-\frac{1}{12}\delta(M).$$
Hence 
\begin{align*}
    \ln \mathcal{Z}_{\rho_{Ar},\rho_{can},N}^{(\theta)}(0)&=-\frac{N}{2}\ln N +N\left(2\pi(1-g)S_{AY}(\hat{\phi}_{Ar},\rho_{can})-\frac{1}{4}S_{M}(-\sigma_{Ar},-\phi_{Ar},\rho_{Ar})+\ln2\pi\right) +\frac{(g-1)}{6}\ln N-\frac{2}{3}g\ln2\pi\\&+ (1-g)\left(2\zeta'(-1)-\frac{1}{6}\ln2\pi-\frac{1}{4}+\frac{1}{6}\ln2\right)-\frac{\Psi_{P}(\rho_{can})-S_{L}(-\sigma_{Ar},\rho_{Ar})}{24\pi} -\frac{1}{2}\ln\det \mathfrak{Im}(\tau)-\frac{\delta(M)}{12} +O\left(\frac{\ln N}{N} \right).
\end{align*}

\begin{Remark}
    For $g=0$, The Faltings' delta invatiant is $\delta(\mathbb{S}^{2})=0$ (see Equation \eqref{Eq:FaltingSphere}) and for $g=1$, we know that  $\delta(\mathbb{T^{2}_{\tau}})=-6\ln \big( \mathfrak{Im}(\tau)|\eta(\tau)|^{4} \big)-8\ln2\pi$ (see Equation \eqref{Eq:FaltingsTore}).
\end{Remark}
We thus have found an explicit asymptotic expansion for the logarithm of the modified partition function. In the following, we will use it to deduce the asymptotic expansion of the logarithm of the partition function.

\begin{Remark}
    For $g=0$, we have $\mathcal{Z}^{(\theta)}=\mathcal{Z},$ so for this particular case, the preceding result can be considered the final result.
\end{Remark}
\subsubsection{Integral of theta function}\label{AnnexeCalculTheta}
To find the expansion of the partition function from the expansion of the modified partition function, we need to eliminate the theta function. To do so, we consider $M$ of genus $g\geq1$ and we will do an integration of the theta function on the Jacobian torus. Consequently, we need to compute  
$$\int_{\mathbb{T}^{2g}} \|\theta(z,\tau)\|^{2} dZ$$ for $$\|\theta(z,\tau)\|^{2}=\exp\left(-2\pi \mathfrak{Im}(z)\mathfrak{Im}(\tau)^{-1}\mathfrak{Im}(z) \right)|\theta(z,\tau)|^{2}$$ and $$dZ=\frac{dz}{\det\mathfrak{Im}(\tau)},$$ where we recall that $\mathfrak{Im}(\tau)$ is a positive definite matrix of size $g.$ 
This computation is used to obtain the difference between the modified partition function and the partition function (see Section \ref{SectionFinal} below). This computation is also useful for Appendix \ref{AnnexeDirectTore}.
We will now prove the following useful Lemma.
\begin{Lemma}
    The following equality holds: $$\int_{\mathbb{T}^{2g}_{\tau}} \|\theta(Z,\tau)\|^{2} dZ=\left(\frac{1}{2^{g}\det\mathfrak{Im}(\tau)}\right)^{1/2}$$
\end{Lemma}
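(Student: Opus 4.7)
The plan is a direct computation starting from the series expansion of $\theta$. First I would introduce real coordinates $Z = X + \tau Y$ with $X, Y \in [0,1]^g$ on the Jacobian, noting that the Jacobian of the change of variables from $(X,Y)$ to $(\mathfrak{Re}(Z), \mathfrak{Im}(Z))$ is $\det \mathfrak{Im}(\tau)$, so that the normalization $dZ = dz/\det\mathfrak{Im}(\tau)$ becomes exactly $dX\, dY$ on the unit cube. This already explains the $\det\mathfrak{Im}(\tau)$ factor on the right-hand side; what remains is to compute the integral over $[0,1]^{2g}$.

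Next I would expand the squared modulus of the theta series as a double sum over $m, n \in \mathbb{Z}^g$:
\begin{align*}
|\theta(Z,\tau)|^{2}=\sum_{m,n\in\mathbb{Z}^{g}}\exp\bigl(&i\pi(n^{T}\mathfrak{Re}(\tau)n-m^{T}\mathfrak{Re}(\tau)m)-\pi(n^{T}\mathfrak{Im}(\tau)n+m^{T}\mathfrak{Im}(\tau)m)\\&+2\pi i(n-m)^{T}\mathfrak{Re}(Z)-2\pi(n+m)^{T}\mathfrak{Im}(Z)\bigr).
\end{align*}
Since $\mathfrak{Re}(Z) = X + \mathfrak{Re}(\tau)Y$, integrating in $X$ over $[0,1]^g$ at fixed $Y$ uses $\int_{[0,1]^{g}}e^{2\pi i(n-m)^{T}X}\,dX=\delta_{n,m}$, so the double sum collapses to the diagonal $m=n$. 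The surviving sum is
\[
\sum_{n\in\mathbb{Z}^{g}}\exp\bigl(-2\pi n^{T}\mathfrak{Im}(\tau)n-4\pi n^{T}\mathfrak{Im}(Z)\bigr).
\]

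I would then multiply by the Gaussian prefactor $\exp(-2\pi\mathfrak{Im}(Z)^{T}\mathfrak{Im}(\tau)^{-1}\mathfrak{Im}(Z))$ and use $\mathfrak{Im}(Z)=\mathfrak{Im}(\tau)Y$ to complete the square: the exponent becomes exactly $-2\pi(n+Y)^{T}\mathfrak{Im}(\tau)(n+Y)$. Integrating in $Y$ over $[0,1]^g$ and unfolding the sum over $n$ gives a single Gaussian integral over all of $\mathbb{R}^g$:
\[
\int_{[0,1]^{2g}}\|\theta(Z,\tau)\|^{2}\,dX\,dY=\int_{\mathbb{R}^{g}}e^{-2\pi u^{T}\mathfrak{Im}(\tau)u}\,du=\frac{\pi^{g/2}}{\sqrt{\det(2\pi\mathfrak{Im}(\tau))}}=\Bigl(\frac{1}{2^{g}\det\mathfrak{Im}(\tau)}\Bigr)^{1/2},
\]
which is the claimed identity.

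There is no real obstacle here — the argument is purely algebraic — but the one point that requires care is bookkeeping the various $\pi$ and $2\pi$ factors and the convention $dZ=dz/\det\mathfrak{Im}(\tau)$, since a slip of a factor of $2$ in the Gaussian integral or in the change of variables $(X,Y)\mapsto(\mathfrak{Re}(Z),\mathfrak{Im}(Z))$ would destroy the $2^g$ appearing in the answer. Implicit in the manipulations is the absolute convergence of the series representation of $|\theta|^2$ together with the exponential decay from $\mathfrak{Im}(\tau)>0$, which justifies the term-by-term integration and Fubini.
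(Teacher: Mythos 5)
Your proposal is correct and follows essentially the same route as the paper's proof: real coordinates $Z=X+\tau Y$, expansion of $|\theta|^2$ as a double sum, integration in $X$ collapsing to the diagonal, completion of the square to $-2\pi(n+Y)^{T}\mathfrak{Im}(\tau)(n+Y)$, unfolding the lattice sum into a Gaussian integral over $\mathbb{R}^{g}$. The only (welcome) difference is that you make explicit the Jacobian justifying $dZ=dX\,dY$ and the Fubini/absolute-convergence step, which the paper leaves implicit.
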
 
\begin{proof}We will note for $z \in \mathbb{T}^{2g}, $ $z=P+\tau Q$, with $P$ and $Q$ two real vectors. With this notation: $$\|\theta(z,\tau)\|^{2}=\exp\left(-2\pi Q^{T}\mathfrak{Im}(\tau) Q \right)|\theta(z,\tau)|^{2}$$
and $$\theta(z,\tau)=\sum_{n \in \mathbb{Z}^{g}}\exp\left(i\pi n^{T}\tau n+2i\pi n^{T}z \right)$$
hence $$\bar{\theta}(z,\tau)=\sum_{n \in \mathbb{Z}^{g}}\exp\left(-i\pi n^{T}\bar{\tau} n-2i\pi n^{T}\bar{z} \right)$$
and \begin{align*}|\theta(z,\tau)|^{2}&=\sum_{n\in \mathbb{Z}^{g}}\sum_{\ell\in \mathbb{Z}^{g}}\exp\left(i\pi n^{T}\tau n+2i\pi n^{T}z -i\pi \ell^{T}\bar{\tau} l-2i\pi \ell^{T}\bar{z}\right)\\&=\sum_{n\in \mathbb{Z}^{g}}\sum_{\ell\in \mathbb{Z}^{g}}\exp\left(i\pi n^{T}\tau n -i\pi \ell^{T}\bar{\tau} \ell+2i\pi (n^{T}\tau - \ell^{T}\bar{\tau})Q\right)\exp\left(2i\pi (n^{T}-\ell^{T})P \right).\end{align*}

We want to compute: $$\int_{\mathbb{T}^{2g}}\|\theta(z,\tau)\|^{2}dP dQ. $$ From Fubini's theorem, we obtain: 
\begin{align*}
\int_{\mathbb{T}^{2g}}\|\theta(z,\tau)\|^{2}dP dQ&=\sum_{n\in \mathbb{Z}^{g}}\int_{[0,1]^{g}}\exp\left(i\pi n^{T}(\tau-\bar{\tau}) n +2i\pi n^{T}(\tau - \bar{\tau})Q\right)\exp\left(-2\pi Q^{T}\mathfrak{Im}(\tau) Q \right)dQ \\&=\sum_{n\in \mathbb{Z}^{g}}\int_{[0,1]^{g}}\exp \left(-2\pi n^{T}\mathfrak{Im}(\tau)n-4\pi n^{T}\mathfrak{Im}(\tau)Q-2\pi Q^{T}\mathfrak{Im}(\tau)Q \right)dQ \\&=\sum_{n\in \mathbb{Z}^{g}}\int_{[0,1]^{g}}\exp\left(-2\pi (n+Q)^{T}\mathfrak{Im}(\tau)(n+Q)\right) dQ \\&=\sum_{n\in \mathbb{Z}^{g}}\int_{n+[0,1]^{g}}\exp(-2\pi V^{T}\mathfrak{Im}(\tau) V)dV \\&=\int_{\mathbb{R}^{g}}\exp\left(-\frac{1}{2}U^{T}\mathfrak{Im(\tau)}U \right)\frac{dU}{(4\pi)^{g/2}}\\&=\left(\frac{1}{2^{g}\det\mathfrak{Im}(\tau)}\right)^{1/2}
\end{align*}
because $\mathfrak{Im}(\tau)$ is positive definite.
\end{proof}

\begin{Remark}
    The preceding computation makes sense only for $g\geq1$, but for the case $g=0$, the modified partition function is equal to the partition function, so this procedure is unnecessary in this special case.
\end{Remark}

\subsubsection{Expansion of the partition function}\label{SectionFinal}
In Theorem \ref{TheoremFinalN}, we computed the asymptotic expansion of the free energy for the modified partition function, we now want to obtain the asymptotic expansion of the free energy for the partition function defined in Definition \ref{DefFoncPart}. 

Using the computations of Section \ref{AnnexeCalculTheta}, we can deduce from Theorem \ref{TheoremFinalN} the expansion of the partition function. Indeed, in the left hand side of the expansion of Theorem \ref{TheoremFinalN}, we can integrate the $[L]$ appearing in the bosonization formula and the terms in right hand side do not depend of this variable anymore, moreover the rest $O\left(N^{-1}\ln N \right)$ coming from Proposition \ref{ThFinski} and Proposition \ref{VarDetFi} can be bounded uniformly in the Jacobian (see Remark \ref{UniformeFinski}). Therefore, since $$\mathcal{Z}_{\rho_{Ar},\rho_{can},N}(V)=(2^{g}\det \mathfrak{Im}(\tau))^{1/2}\int_{\Jac(M)}\mathcal{Z}^{(\theta)}_{\rho_{Ar},\rho_{can},N}(V)$$ we find: $$\ln \mathcal{Z}_{\rho_{Ar},\rho_{can},N}(V)=\ln \mathcal{Z}_{\rho_{Ar},\rho_{can,N}}^{(\theta)}(V)+\frac{1}{2}\ln\det\mathfrak{Im}(\tau)+\frac{g}{2}\ln2+O\left(\frac{\ln N}{N} \right).$$

Indeed, our computation consists in varying the line bundle $E$ in $L=L_{0}^{k}\otimes E$ and keeping the curvature  of the line bundle $L$ (namely the magnetic field) constant, consequently from Remark \ref{UniformeFinski} and Theorem \ref{TheoremFinalN}: 

\begin{align*}
    \int_{\Jac(M)} \mathcal{Z}^{(\theta)}_{\rho_{Ar},\rho_{can},N}(V)&=\exp\left(b_{2}N^{2}+a_{1} N\ln N +a_{0} \ln N +b_{0}\right)\int_{M}\exp \left(O(N^{-1}\ln N) \right) \\&=\exp \left(b_{2}N^{2}+a_{1} N\ln N +a_{0} \ln N +b_{0}+O\left(N^{-1}\ln N \right)\right)
\end{align*}
by the aforementioned uniformity.
Thus, we obtain our final Theorem: 
\begin{theorem}\label{ThéorèmeFinalSansTheta}
    We have for any potential $V$ such that $2\pi+\frac{1}{2}\Delta_{\rho_{can}}V>0$: 
    \begin{equation}
        \ln \mathcal{Z}_{\rho_{Ar},\rho_{can},N}(V)=\ln \mathcal{Z}_{\rho_{Ar},\rho_{can},N}(0)+\ln\det\square_{L,\rho_{Ar},\hat{h}_{k,V}}-\ln\det\square_{L,\rho_{Ar},h}+k^{2}S_{\infty}(V)+\frac{k}{2\pi}(1-g)c_{V}+O(N^{-1}\ln N)
    \end{equation}
    where $$\ln \mathcal{Z}_{\rho_{Ar},\rho_{can},N}(0)=\ln\det\square_{L,\rho_{Ar},h}-\ln B_{g,k}+\frac{1}{2}\ln  \left(\frac{\det\Delta_{\rho_{Ar}}}{\vol(M,\rho_{Ar})}\right)+\frac{g}{2}\ln2+O(N^{-1}\ln N).$$
    This gives:
    \begin{equation}
        \ln \mathcal{Z}_{\rho_{Ar},\rho_{can},N}(V)=B_{2}N^{2}+A_{1}N\ln N+B_{1} N+A_{0}\ln N+B_{0}+O(N^{-1}\ln N)
    \end{equation}
    with: \begin{align*}
                B_{2}=b_{2},~~~~ A_{1}=a_{1}, ~~~~ B_{1}=b_{1}, ~~~~ A_{0}=a_{0},~~~~ B_{0}=b_{0}+\frac{1}{2}\ln\det\mathfrak{Im}(\tau)+\frac{g}{2}\ln2
    \end{align*}
    with the $b_{i}$ and $a_{i}$ as in Theorem \ref{TheoremFinalN}.

Explicitly, we have: 
\begin{align*}
    B_{2}&=S_{\infty}(V),~~~~  A_{1}=-\frac{1}{2}, ~~~~ A_{0}=\frac{g-1}{6}=-\frac{\chi(M)}{12} \\B_{1}&=2(g-1)S_{\infty}(V)+2\pi(1-g)S_{AY}(\hat{\phi}_{Ar},\rho_{can})-\frac{1}{4}S_{M}(-\sigma_{Ar},-\phi_{Ar},\rho_{Ar})+\ln2\pi-\frac{\ln2}{2}\\&+\frac{1}{2}\left(\int_{M}\mu_{V}\ln\left(f_{V} \right)+\int_{M}\mu_{\rho_{can}}R_{\rho_{can}}\frac{V}{4\pi} \right) 
    \end{align*}
    and
    \begin{align*}
        B_{0}&=(g-1)^{2}S_{\infty}(V)+ (1-g)\left(4\zeta'(-1)+\frac{1}{3}\ln2\pi-\frac{1}{3}+\frac{1}{3}\ln2\right)-\frac{\Psi_{P}(\rho_{can})}{24\pi} \\[1.7mm]&+\frac{(g-1)}{2}\left(\int_{M}\mu_{V}\ln\ f_{V}+\int_{M}\mu_{\rho_{can}}R_{\rho_{can}}\frac{V}{4\pi} \right) \\[1.7mm]&+\frac{2}{3}(1-g)\int_{M}\mu_{\rho_{can}}\ln f_{V} +\frac{1}{24\pi}\int_{M}\mu_{\rho_{can}}\left(R_{\rho_{can}}-8\pi(1-g) \right)\ln f_{V}\\[1.7mm]&+\frac{1}{48\pi}\int_{M}\mu_{\rho_{can}}\ln (f_{V})\Delta_{\rho_{can}}\ln \left(f_{V} \right)+\frac{1}{2}\ln \left(\det\Delta_{\rho_{can}}\right)+\frac{g}{2}\ln2
    \end{align*}
with $\chi(M)=2(1-g)$ the Euler characteristic of $M$, $S_{\infty}$ a functional defined in Equation \eqref{Sinfini}, $\Psi_{P}$ the Polyakov functional defined in Definition \ref{PotRicci}. $S_{AY},$ $S_{M}$, and $S_{L}$ are respectively the Aubin-Yau functional, the Mabuchi functional and the Liouville functional which are all defined in Definition \ref{DefLiouvMabAubYau}, $\mu_{V}$ is the equilibrium volume form defined together with $f_{V}$ in Equation \eqref{MuV}, and $\hat{\phi}_{Ar}$ was defined in Equation \eqref{PhiHat}.
\end{theorem}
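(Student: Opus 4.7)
The plan is to deduce the partition function expansion from the already-computed expansion of the modified partition function (Theorem \ref{TheoremFinalN}) by integrating out the theta function factor via averaging over the Jacobian torus. The key formula, essentially noted in the excerpt, is
$$\mathcal{Z}_{\rho_{Ar},\rho_{can},N}(V) = (2^{g}\det\mathfrak{Im}(\tau))^{1/2}\int_{\Jac(M)}\mathcal{Z}^{(\theta)}_{\rho_{Ar},\rho_{can},N}(V),$$
which comes from writing $L = L_{0}^{k}\otimes E$ and integrating over the class of $E$; the normalizing constant is produced by the Lemma of Section \ref{AnnexeCalculTheta} giving $\int_{\mathbb{T}^{2g}_{\tau}}\|\theta(Z,\tau)\|^{2}\,dZ = (2^{g}\det\mathfrak{Im}(\tau))^{-1/2}$.

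Next I would verify that the coefficients $b_{2},a_{1},b_{1},a_{0},b_{0}$ of Theorem \ref{TheoremFinalN} do not depend on the choice of $E \in \Jac(M)$. The functionals $S_{1}(0,V,\rho_{Ar},h)$ and $S_{2}(V,\rho_{Ar},h)$ are built from the base metrics only; the difference $\ln\det\square_{L,\rho_{Ar},\hat{h}_{k,V}}-\ln\det\square_{L,\rho_{Ar},h}$ depends, by Proposition \ref{FormuleVarLapMag}, only on the magnetic field $B(\rho_{Ar},h)$, which remains equal to $2\pi k e^{-2\sigma_{Ar}}$ for every admissible $h$. Finally, $\ln\mathcal{Z}^{(\theta)}_{\rho_{Ar},\rho_{can},N}(0)$ depends on $E$ only through $\ln\det\square_{L,\rho_{Ar},h}$, whose asymptotic leading coefficients are fixed by the magnetic field via Theorem \ref{ThFinski}.

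With this, applying Theorem \ref{TheoremFinalN} pointwise in $E$ gives
$$\ln \mathcal{Z}^{(\theta)}_{\rho_{Ar},\rho_{can},N}(V) = b_{2}N^{2}+a_{1}N\ln N+b_{1}N+a_{0}\ln N+b_{0}+r_{E}(N),$$
and the main technical point, highlighted by the paper just before the theorem, is that the remainder $r_{E}(N) = O(N^{-1}\ln N)$ is uniform in $E\in \Jac(M)$; this is exactly Remark \ref{UniformeFinski}, quoting Finski. Exponentiating, integrating over the compact Jacobian, and taking the logarithm lets me pull the error out:
$$\int_{\Jac(M)}\mathcal{Z}^{(\theta)}_{\rho_{Ar},\rho_{can},N}(V) = \exp\bigl(b_{2}N^{2}+a_{1}N\ln N+b_{1}N+a_{0}\ln N+b_{0}+O(N^{-1}\ln N)\bigr),$$
since the $E$-dependent factors collapse to a bounded constant absorbed into the bounded remainder (one can always normalize $\Jac(M)$ to unit volume for the flat measure used in Section \ref{AnnexeCalculTheta}).

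Finally, multiplying by $(2^{g}\det\mathfrak{Im}(\tau))^{1/2}$ and taking the log adds $\tfrac{1}{2}\ln\det\mathfrak{Im}(\tau)+\tfrac{g}{2}\ln 2$ to the constant term and leaves the remaining coefficients untouched, yielding $B_{2}=b_{2}$, $A_{1}=a_{1}$, $B_{1}=b_{1}$, $A_{0}=a_{0}$, and $B_{0}=b_{0}+\tfrac{1}{2}\ln\det\mathfrak{Im}(\tau)+\tfrac{g}{2}\ln 2$. The explicit forms of $B_{1}$ and $B_{0}$ are then obtained by substituting the formulas from Theorem \ref{TheoremFinalN} and rewriting the combination $\tfrac{1}{2}\ln(\det\Delta_{\rho_{Ar}}/\vol(M,\rho_{Ar}))+\tfrac{1}{2}\ln\det\mathfrak{Im}(\tau)+\tfrac{g}{2}\ln 2$ as $\tfrac{1}{2}\ln\det\Delta_{\rho_{can}}+\tfrac{g}{2}\ln 2$ via Proposition \ref{VarDetSca} applied to $\rho_{Ar}=e^{2\sigma_{Ar}}\rho_{can}$, which produces the expected $-\tfrac{1}{12\pi}S_{L}(\sigma_{Ar},\rho_{can})$ correction and merges with the $\Psi_{P}$ and $S_{L}$ terms; the case $g=0$ where $\mathcal{Z}^{(\theta)}=\mathcal{Z}$ gives a consistency check requiring no integration. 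The main obstacle is the uniform control of $r_{E}(N)$, without which the exchange of integration and asymptotic expansion would fail; this is handled entirely by invoking Remark \ref{UniformeFinski}, so once that is granted the rest is bookkeeping.
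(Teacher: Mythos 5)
Your proposal follows the paper's own proof essentially verbatim: you average $\mathcal{Z}^{(\theta)}_{\rho_{Ar},\rho_{can},N}(V)$ over the Jacobian using the theta-integral Lemma of Section \ref{AnnexeCalculTheta}, observe that the coefficients depend on $E$ only through the fixed magnetic field, invoke the uniformity of the remainder from Remark \ref{UniformeFinski} to exchange integration and asymptotic expansion, and then shift the constant term by $\tfrac{1}{2}\ln\det\mathfrak{Im}(\tau)+\tfrac{g}{2}\ln 2$ before rewriting in terms of $\det\Delta_{\rho_{can}}$ via Proposition \ref{VarDetSca}, exactly as in Section \ref{SectionFinal}. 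The only quibble is a bookkeeping slip: the correction to $\tfrac{1}{2}\ln\bigl(\det\Delta_{\rho_{Ar}}/\vol(M,\rho_{Ar})\bigr)$ is $-\tfrac{1}{24\pi}S_{L}(\sigma_{Ar},\rho_{can})$ (half of the $-\tfrac{1}{12\pi}S_{L}$ appearing in Proposition \ref{VarDetSca}), which does not affect the argument.
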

\begin{Remark}
    The preceding theorem is true in any genus $g.$ Moreover, we gave in the previous theorem an expansion in $N$, but we should note that with the same method we could obtain an asymptotic expansion in $k$, which would have a simpler expression.
\end{Remark}

The preceding result gives an explicit asymptotic expansion of the free energy in the determinantal case and we find that the determinant of the scalar Laplacian appears in the constant term of the expansion thus confirming the geometric Zabrodin-Wiegmann conjecture.
\begin{Remark}
    This result has to be compared with the one conjectured for the Coulomb gaz in the plane $\mathbb{C}$ for general $\beta$ \cite[§ 9.3]{serfaty2024lecturescoulombrieszgases}
\begin{align*}
    \ln \left(\frac{1}{N!}Z_{N,\beta}\right)&=-2\beta N^{2} \mathcal{E}(\mu_{V})+\left(\frac{\beta}{4}-\frac{1}{2}\right)N\ln N+\left(2\beta f_{d}(\beta)+1+\left(1-\frac{\beta}{2} \right)\int_{M}\mu_{V} \ln \mu_{V} \right)N+\frac{4}{3\sqrt{\pi}}\ln \left(\beta \right)\sqrt{N}\\&-\frac{\chi}{12}\ln N+E_{\beta}-\frac{1}{2}\ln2\pi+o(1)
\end{align*}
where our computation was done for the determinantal case $\beta=1$ on compact Riemann surfaces and in the case we considered, the physical potential was $-\frac{V}{2}$. 
Note also that we chose different notations than \cite{serfaty2024lecturescoulombrieszgases}, the determinantal case $\beta=1$ corresponds to the case $\beta=2$ in \cite{serfaty2024lecturescoulombrieszgases}.
\end{Remark}
In pariticular, we have without potential:

\begin{Corollary}
In the simpler case with no potential, $V=0$, the expansion of the partition function is:
    \begin{align*}
    \ln \mathcal{Z}_{\rho_{Ar},\rho_{can},N}(0)&=-\frac{N}{2}\ln N +N\left(2\pi(1-g)S_{AY}(\hat{\phi}_{Ar},\rho_{can})-\frac{1}{4}S_{M}(-\sigma_{Ar},-\phi_{Ar},\rho_{Ar})+\ln2\pi-\frac{\ln2}{2}\right) +\frac{(g-1)}{6}\ln N\\&+ (1-g)\left(4\zeta'(-1)+\frac{1}{3}\ln2\pi-\frac{1}{3}+\frac{1}{3}\ln2\right)-\frac{\Psi_{P}(\rho_{can})}{24\pi} +\frac{1}{2}\ln\left(\det\Delta_{\rho_{can}}\right)+\frac{g}{2}\ln2 +O\left(\frac{\ln N}{N} \right).
    \end{align*}
\end{Corollary}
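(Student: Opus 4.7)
The plan is to obtain Theorem~\ref{ThéorèmeFinalSansTheta} by combining three ingredients already established: the exact identity of Theorem~\ref{TheoremPartitionGénéral} for the modified partition function $\mathcal{Z}^{(\theta)}$, the asymptotic expansion of $\ln\mathcal{Z}^{(\theta)}$ from Theorem~\ref{TheoremFinalN}, and the theta integral computed in Section~\ref{AnnexeCalculTheta}. The only genuine new input is an averaging argument over the Jacobian $\Jac(M)$ that removes the theta factor appearing in the bosonization formula.

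First, I would set up the link between $\mathcal{Z}$ and $\mathcal{Z}^{(\theta)}$. Writing $L=L_0^k\otimes E$ with $E$ of degree zero, the line bundle class $[L]$ appearing as an argument of $\|\theta\|^2$ in Proposition~\ref{Pr:FormuleBoso} ranges over a translate of $\Jac(M)\simeq \mathbb{T}^{2g}_\tau$ as $E$ varies, while the curvature of $L$ (hence the magnetic field) stays fixed. Integrating the defining expression~\eqref{ModifPartTheta} against the flat probability measure on $\Jac(M)$ and using the Lemma of Section~\ref{AnnexeCalculTheta}, namely $\int_{\mathbb{T}^{2g}_\tau}\|\theta(Z,\tau)\|^2\,dZ=(2^g\det\mathfrak{Im}(\tau))^{-1/2}$, yields the identity
\[
\mathcal{Z}_{\rho_{Ar},\rho_{can},N}(V)=(2^g\det\mathfrak{Im}(\tau))^{1/2}\int_{\Jac(M)}\mathcal{Z}^{(\theta)}_{\rho_{Ar},\rho_{can},N}(V).
\]

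Second, I would feed in the expansion of Theorem~\ref{TheoremFinalN}: $\ln\mathcal{Z}^{(\theta)}=b_2N^2+a_1N\ln N+b_1N+a_0\ln N+b_0+O(N^{-1}\ln N)$. The crucial observation is Remark~\ref{UniformeFinski}: the coefficients $b_i, a_i$ depend on $L$ only through its curvature (fixed as $E$ varies), and Finski's remainder in Theorem~\ref{ThFinski}, as well as the variation estimate in Proposition~\ref{VarDetFi}, are uniform on the Jacobian. Therefore exponentiating, integrating against a probability measure on $\Jac(M)$, and taking logarithms gives
\[
\ln\mathcal{Z}_{\rho_{Ar},\rho_{can},N}(V)=\ln\mathcal{Z}^{(\theta)}_{\rho_{Ar},\rho_{can},N}(V)+\tfrac{1}{2}\ln\det\mathfrak{Im}(\tau)+\tfrac{g}{2}\ln 2+O(N^{-1}\ln N).
\]
The exact first identity in Theorem~\ref{ThéorèmeFinalSansTheta} for $V=0$ and for general $V$ then follows from the exact identities in Theorem~\ref{TheoremPartitionGénéral} with the additive correction $\frac{1}{2}\ln\det\mathfrak{Im}(\tau)+\frac{g}{2}\ln 2$.

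Finally, reading off the coefficients is purely algebraic: only the constant term is affected by the Jacobian averaging, so $B_2=b_2$, $A_1=a_1$, $B_1=b_1$, $A_0=a_0$ and $B_0=b_0+\tfrac{1}{2}\ln\det\mathfrak{Im}(\tau)+\tfrac{g}{2}\ln 2$. Substituting the explicit expressions for $a_i,b_i$ from Theorems~\ref{TheoremFinalK} and~\ref{TheoremFinalN} (which themselves package the data coming from Theorem~\ref{DifférenceLaplacienMagPotentiel}, the expansion of $\ln\det\square_{L,\rho_{Ar},h}$ in Theorem~\ref{DevAsympLapMagAdm}, and the constants $\ln B_{g,k}$ from Proposition~\ref{ConstantesWentworth}) gives the stated formulas for $B_1$ and $B_0$; the appearance of $\frac{1}{2}\ln\det\Delta_{\rho_{can}}$ in $B_0$ follows from the Polyakov anomaly of Proposition~\ref{VarDetSca} used to convert $\frac{1}{2}\ln(\det\Delta_{\rho_{Ar}}/\vol(M,\rho_{Ar}))$ into $\frac{1}{2}\ln\det\Delta_{\rho_{can}}$.

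The main obstacle is the uniformity step in the Jacobian integration: the $O(N^{-1}\ln N)$ remainder in Finski's expansion and in Proposition~\ref{VarDetFi} must be uniform in the class $E\in\Jac(M)$ so that $\int_{\Jac(M)}\exp(O(N^{-1}\ln N))=1+O(N^{-1}\ln N)$ holds, allowing the error to be pulled through the logarithm after integration. This is precisely the content of Remark~\ref{UniformeFinski}, invoking \cite{FINSKI20183457}. For $g=0$ no theta factor is present and the argument collapses to a direct substitution, while for $g\geq 1$ the averaging is indispensable.
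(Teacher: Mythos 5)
Your proposal is correct and follows essentially the same route as the paper: the identity $\mathcal{Z}=(2^{g}\det\mathfrak{Im}(\tau))^{1/2}\int_{\Jac(M)}\mathcal{Z}^{(\theta)}$ combined with the theta-integral Lemma of Section~\ref{AnnexeCalculTheta}, the uniformity of the remainder over the Jacobian from Remark~\ref{UniformeFinski}, and then reading off the coefficients of Theorems~\ref{TheoremFinalK}--\ref{TheoremFinalN} (with the Polyakov anomaly and the identities of Section~\ref{Section:ArakelovCanonical} converting the constant term) is exactly how the paper obtains Theorem~\ref{ThéorèmeFinalSansTheta}, of which this Corollary is the $V=0$ specialization with $S_{2}(0)=0$ and $f_{V}=1$.
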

\begin{Example}
For $g=0$, using Equation \eqref{DetLapScaArSph} the previous theorem gives: 
$$\ln\mathcal{Z}_{\rho_{Ar},\rho_{can,N}}(0)=-\frac{N}{2}\ln N + N\left(\frac{1+3\ln2\pi}{2}-\ln2 \right)-\frac{1}{6}\ln N+2\zeta'(-1)-\frac{1}{12}+O\left(\frac{\ln N}{N} \right)$$
and for $g=1$, using Equation \eqref{DetLapScaArTore}: 
$$\ln\mathcal{Z}_{\rho_{Ar},\rho_{can,N}}(0)=-\frac{N}{2}\ln N+N\ln\Big(2\pi^{2} \sqrt{\mathfrak{Im}(\tau)}|\eta(\tau)|^{2}\Big)+\frac{1}{2}\ln \big(2\mathfrak{Im}(\tau)|\eta(\tau)|^{4}\big)+O\left(\frac{\ln N}{N} \right).$$
\end{Example}

\begin{Remark}
    All the preceding computations were made with the arbitrary choice $$\int_{M}\mu_{\rho_{can}}G_{\rho_{can}}=0.$$ If, instead we consider $$\tilde{G}_{\lambda,\rho_{can}}(x,y)=G_{\rho_{can}}(x,y)+\lambda$$ with $\lambda$ a constant. We can see that: $$\int_{M^{2}}\mu_{\rho_{can}}^{2}(x,y)\tilde{G}_{\lambda,\rho_{can}}(x,y)=\lambda.$$ If we define $$\tilde{\mathcal{Z}}_{\lambda,\rho_{Ar},\rho_{can}}(V)=\frac{1}{N!}\int_{M^{N}}\mu_{\rho_{Ar}}^{N}\exp\left(2\sum_{1\leq i < j \leq N} \tilde{G}_{\lambda,\rho_{can}}(p_{i},p_{j}) +k\sum_{i=1}^{N} V(p_{i})\right),$$ then we directly see from the definition that, if $2\pi+\frac{1}{2}\Delta_{\rho_{can}}V>0,$ $$        \ln \tilde{\mathcal{Z}}_{\lambda,\rho_{Ar},\rho_{can},N}(V)=\tilde{B}_{2}N^{2}+A_{1}N\ln N+\tilde{B}_{1} N+A_{0}\ln N+B_{0}+O(N^{-1}\ln N),
$$ with $$\tilde{B}_{2}=B_{2}+\lambda, ~~ \text{ and } ~~ \tilde{B}_{1}=B_{1}-\lambda$$
and the $A_{i}$ and $B_{i}$ are given in Theorem \ref{ThéorèmeFinalSansTheta}.
\end{Remark}
\subsection{Fluctuations}
In this section, we will consider the convergence of the fluctuations, in the same setting as before, proving Corollary \ref{FluctuationsConvergence} using Theorem \ref{ThéorèmeFinalSansTheta} and following the method of \cite{ameur2025freeenergyfluctuationsrandom,ameur2025twodimensionalcoulombgasfluctuations}. 
Associated with the determinantal Coulomb gas with $V$ quasi-subharmonic is the probability measure:
\begin{equation}\label{eq:Proba} \mathrm{d}\mathbb{P}_{\rho_{can},\rho_{Ar},N}^{(V)}=\frac{N!}{\mathcal{Z}_{\rho_{can},\rho_{Ar},N}(V)}\exp\left(2\sum_{1\leq i < j \leq N} G_{\rho_{can}}(p_{i},p_{j}) +k\sum_{i=1}^{N} V(p_{i})\right)\mu_{\rho_{Ar}}^{N}
\end{equation}

We consider $f$ to be a non-constant smooth function on $M$ and we definr $W_{s}=V+\frac{sf}{k}$ where $k=N+g-1$ and $V$ is quasi-subharmonic. Then, for $0< s <\frac{4\pi k \min|f_{V}|}{\max|\Delta_{\rho_{can}} f| }$, with $f_{V}$ defined in Equation \eqref{MuV}, $W_{s}$ is also quasi-subharmonic (see Equation \eqref{ConditionV}). We will define: $$\Fluct_{N}f=\sum_{i=1}^{N} f(z_{i})-N\int_{M} \mu_{V} f. $$ 

We note $F_{N,f}(s)=\log \mathbb{E}_{N}^{(V)} \left(e^{s \Fluct_{N}f} \right) $ the cumulant generating functional of $\Fluct_{N}f$

Then, from \cite[Section 1.3.1]{ameur2025freeenergyfluctuationsrandom} and \cite[Section 1.3.2]{ameur2025twodimensionalcoulombgasfluctuations}, we can find: $$ \frac{d}{ds} F_{N,f}(s)=\frac{d}{ds} \ln \mathcal{Z}_{\rho_{Ar},\rho_{can}}(W_{s})-N\int_{M} \mu_{V}f $$ and from Theorem \ref{ThéorèmeFinalSansTheta}, we have when $k=N+g-1\to \infty$: $$\frac{d}{ds}\ln \mathcal{Z}_{\rho_{Ar},\rho_{can}}(W_{s})=k\int_{M} \mu_{V} f + \frac{s}{4\pi}\int_{M} \mu_{\rho_{can}} f \Delta_{\rho_{can}} f +\frac{1}{8\pi}\int_{M}\mu_{\rho_{can}}R_{\rho_{can}}f+\frac{1}{8\pi}\int_{M} \mu_{\rho_{can}} f\Delta_{\rho_{can}}\ln(f_{V})+o(1) $$ with $R_{\rho_{can}}$ the scalar curvature and where $\mu_{V}$ and $f_{V}$ are defined in Equation \eqref{MuV}.

Then integrating and using the fact that $F_{N,f}(0)=0$, we find $$F_{N,f}(s) = \frac{s^{2}}{2}v_{f}+sm_{f}+o(1), $$ with $v_{f}$ and $m_{f}$ as in Corollary \ref{FluctuationsConvergence}. We recognize the cumulant generating function of a random normal variable with mean $m_{f}$ and variance $v_{f}$, thus proving Corollary \ref{FluctuationsConvergence}.
\appendix
\section{Exact computations}
We will now present another computation of the same quantities and following the same method using explicit computations of the determinant of the magnetic Laplacian when they are available. In particular, those computations will show that for the torus, $g=1$, we have in fact an exact formula and not only an asymptotic expansion. First, we will check our main result for the sphere ($g=0$) using explicit formulas for the determinant of the magnetic Laplacian. Then, we will do a direct computation of the partition function for the sphere using the explicit Green function and Arakelov metric (see Section \ref{SectionArakelovMetric}). We will then do the same thing for the genus $g=1$. Finally, we will again check our result for $g>1$ using the explicit formula of the determinant of the magnetic Laplacian when $k=2n(g-1)$ with $n \in \mathbb{N}$ and taking $L$ to be the canonical bundle.
\subsection{Case of genus $g=0$}
For the genus $g=0$, we already know from Proposition \ref{Prop:VolumeArakelov} that $\vol(\mathbb{S}^{2},\rho_{Ar})=\pi e.$ Moreover, the determinant of the scalar Laplacian can be explicitly computed and for the round metric $\rho_{0}$ of volume $4\pi$, we know that (\cite[eq. (4.77)]{ExplicitSphere},\cite[Th. 4.1]{ChoiDetLapS3-1994}) $$\det\Delta_{\rho_{0}}=\exp\left(\frac{1}{2}-4\zeta'(-1) \right).$$ Hence, using Equation (\ref{VarDetLapScaCste}) we obtain: 
\begin{equation}\label{DetLapScaArSph}\det\Delta_{\rho_{Ar}}=\exp\left(\frac{7}{6}-\frac{4}{3}\ln2 -4\zeta'(-1)\right).
\end{equation}
We also deduce from what precedes: \begin{equation*}
    \left(\frac{\det\Delta_{\rho_{Ar}}}{\vol(\mathbb{S}^{2},\rho_{Ar})} \right)^{1/2}=\exp\left(\frac{1}{12}-\frac{1}{6}\ln2-2\zeta'(-1)-\frac{1}{2}\ln2\pi \right).
\end{equation*}
Similarly, for the sphere with the Fubini-Study metric $\rho_{FS}$ of volume $2\pi$, we know that, for $L=\mathcal{O}(k)$, and usual admissible metric locally given by $h(z)=\frac{1}{(1+|z|^{2})^{k}}$ we have (\cite[eq. (74)]{KlevtsovQuillen},\cite[p. 352]{WengDeterminantLaplacian}): 
$$\det\square_{L,\rho_{FS},h}=\exp\bigg((k+1)\ln(k+1)!-2\sum_{j=1}^{k+1}j\ln j-4\zeta'(-1)+\frac{(k+1)^{2}}{2}-\frac{k}{2}\ln2-\frac{2}{3}\ln2\bigg)$$ where the terms $-\frac{k}{2}\ln2$ and $-\frac{2}{3}\ln2$ come from the fact that we consider $\square_{L,\rho_{FS},h}=2\bar{\partial}^{\dagger}_{L}\bar{\partial}_{L}$ instead of $\bar{\partial}^{\dagger}_{L}\bar{\partial}_{L}$.  Hence, for the Arakelov metric of volume $\pi e$: \begin{equation}\label{DetLapMagArSph}
    \det \square_{L,\rho_{Ar},h}=\exp\bigg((k+1)\ln(k+1)!-2\sum_{j=1}^{k+1}j\ln j-4\zeta'(-1)+\frac{(k+1)^{2}}{2}+\frac{1-2\ln2}{2}(k+1)+\frac{1-2\ln2}{6}\bigg).
\end{equation}
Then, from the Bosonization formula applied to the sphere, we have: \begin{equation}
    \mathcal{Z}_{\rho_{Ar},\rho_{can},N}(0)=\left(\frac{\det \Delta_{\rho_{Ar}}}{\vol(\mathbb{S}^{2},\rho_{Ar})}\right)^{1/2}\frac{\det\square_{L,\rho_{Ar},h}}{B_{0,k}}
\end{equation}
and we know the explicit expression of every term in the right hand side.
Thus, \begin{equation}
    \ln \mathcal{Z}_{\rho_{Ar},\rho_{can},N}(0)=\frac{1}{2}\ln\left(\frac{\det \Delta_{\rho_{Ar}}}{\vol(\mathbb{S}^{2},\rho_{Ar})}\right)+\ln\det\square_{L,\rho_{Ar},h}-\ln B_{0,k}.
\end{equation}
We know that $\ln B_{0,k}=-k\ln2\pi -6\zeta'(-1)+\frac{1}{4}-\frac{3}{2}\ln2\pi-\frac{1}{2}\ln2.$
And using the Barnes G-function (see Equation \eqref{Eq:Barnes}), the Stirling formula and the asymptotic expansion of $\sum j\ln j$ (see Equation \eqref{Eq:jlnj}), we can compute 
\begin{align*}
    \ln \det\square_{L,\rho_{Ar},h}=-\frac{k}{2}\ln k +\left(\frac{1+\ln 2\pi-\ln2}{2} \right)k-\frac{2}{3}\ln k+\frac{1}{12}-2\zeta'(-1)+\frac{1}{2}\ln2\pi -\frac{2}{3}\ln2+o(1).
\end{align*}
Therefore: \begin{equation}
    \ln \mathcal{Z}_{\rho_{Ar},\rho_{can},N}(0)=-\frac{k}{2}\ln k+\left(\frac{1+3\ln2\pi-2\ln2}{2} \right)k-\frac{2}{3}\ln k+2\zeta'(-1)-\frac{1}{12}-\ln2+\frac{3}{2}\ln2\pi+o(1).
\end{equation}
We can also note, using Equation \eqref{DetLapScaArSph} that Faltings' delta invariant (see Equation \eqref{FaltingDeltaEq}) satisfies: \begin{equation}\label{Eq:FaltingSphere}\delta(\mathbb{S}^{2})=0. \end{equation}

\subsubsection{Direct computation for the sphere}\label{DirectSphere}
For the sphere, we will compute $\ln\mathcal{Z}_{\rho_{Ar},\rho_{can},N}(0)$ with a direct method. We know that $$\exp G_{\rho_{can}}(z,w)=\frac{e^{\frac{1}{2}}|z-w|}{\sqrt{(1+|z|^{2})(1+|w|^{2})}}$$ and $$\mu_{\rho_{Ar}}=\frac{ie}{2(1+|z|^{2})^{2}}dz\wedge d\bar{z}.$$ Hence, we have: 
\begin{align*}
    \mathcal{Z}_{\rho_{Ar},\rho_{can},N}(0)&=\frac{1}{N!}\int_{(\mathbb{S}^{2})^{N}}\mu_{\rho_{Ar}}^{N}\prod_{1\leq i<j\leq N}\exp \Big(G_{\rho_{can}}(z_{i},z_{j})\Big)^{2} \\&=\frac{1}{N!}\int_{(\mathbb{S}^{2})^{N}}\prod_{1\leq j\leq N}\frac{ie}{2(1+|z_{j}|^{2})^{2}}dz_{j}\wedge d\bar{z}_{j} \prod_{1\leq i<j \leq N}\frac{e|z_{i}-z_{j}|^{2}}{(1+|z_{i}|^{2})(1+|z_{j}|^{2})}\\&=\frac{e^{\frac{N(N+1)}{2}}}{2^{N}N!}\int_{(\mathbb{S}^{2})^{N}}\left(\prod_{1\leq i<j \leq N}|z_{i}-z_{j}| \right)^{2}\prod_{1\leq j \leq N} \frac{idz_{j}\wedge d\bar{z}_{j}}{(1+|z_{j}|^{2})^{N+1}} \\&=\frac{e^{\frac{N(N+1)}{2}}}{2^{N}N!}\int_{(\mathbb{S}^{2})^{N}}\left|\det (f_{\ell}(z_{i},1)) \right|^{2}\prod_{1\leq j \leq N} \frac{idz_{j}\wedge d\bar{z}_{j}}{(1+|z_{j}|^{2})^{N+1}}
\end{align*}
where, for $1\leq \ell\leq N$, $$f_{\ell}(z_{i},z_{j})=z_{i}^{\ell-1}z_{j}^{N-\ell}.$$
Therefore: \begin{align*}
    \mathcal{Z}_{\rho_{Ar},\rho_{can},N}(0)&=\frac{e^{\frac{N(N+1)}{2}}}{2^{N}}\det\left(\int_{\mathbb{S}^{2}(M)}f_{j}(z,1)\bar{f}_{\ell}(z,1)\frac{id z\wedge d\bar{z}}{(1+|z|^{2})^{N+1}} \right)_{j,\ell}.
\end{align*}

\begin{Lemma}
    We can see that: 
    \begin{equation*}
       \forall j,i, \quad\quad \int_{\mathbb{S}^{2}(M)}f_{j}(z,1)\bar{f}_{\ell}(z,1)\frac{id z\wedge d\bar{z}}{(1+|z|^{2})^{N+1}} =\frac{2\pi}{N \binom{N-1}{\ell-1}}\delta_{j,\ell}.
    \end{equation*}
\end{Lemma}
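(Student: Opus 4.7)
The plan is to reduce this to a standard Beta-function integral after switching to polar coordinates. First, I would observe that $f_\ell(z,1) = z^{\ell-1}$, so the integrand simplifies to
\[
f_j(z,1)\bar{f}_\ell(z,1) = z^{j-1}\bar{z}^{\ell-1},
\]
and in polar coordinates $z = re^{i\theta}$, one has $i\,dz\wedge d\bar{z} = 2r\,dr\,d\theta$. The integral then splits into an angular part
\[
\int_0^{2\pi} e^{i(j-\ell)\theta}\,d\theta = 2\pi\,\delta_{j,\ell},
\]
which immediately enforces the Kronecker delta and eliminates the off-diagonal terms.

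Next, for the diagonal case $j=\ell$, the integral reduces to
\[
4\pi \int_0^\infty \frac{r^{2\ell-1}}{(1+r^2)^{N+1}}\,dr.
\]
Substituting $u = r^2$, this becomes $2\pi \int_0^\infty \frac{u^{\ell-1}}{(1+u)^{N+1}}\,du$, which is precisely the Beta function $B(\ell, N+1-\ell) = \frac{(\ell-1)!(N-\ell)!}{N!}$.

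Finally, I would identify $\frac{(\ell-1)!(N-\ell)!}{N!} = \frac{1}{N\binom{N-1}{\ell-1}}$ to match the claimed form, yielding $\frac{2\pi}{N\binom{N-1}{\ell-1}}\delta_{j,\ell}$. There is no real obstacle here: the computation is completely routine once one recognizes the Beta integral. The only minor point to verify is the orientation convention for $i\,dz\wedge d\bar{z}$ to make sure the sign and factor of $2$ are correct, which is consistent with the conventions for $\mu_\rho$ fixed earlier in the paper.
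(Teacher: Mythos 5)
Your proof is correct. The overall structure matches the paper's: pass to polar coordinates, use the angular integral $\int_0^{2\pi}e^{i(j-\ell)\theta}\,d\theta=2\pi\delta_{j,\ell}$ to kill the off-diagonal terms, then evaluate the radial integral. The only difference is in the radial step: the paper constructs an explicit antiderivative $\phi(r)=\sum_i p_i r^{2i}/(1+r^2)^N$ with recursively defined coefficients $p_i$ and reads off the value from $\phi(\infty)-\phi(0)$, whereas you substitute $u=r^2$ and recognize the Euler Beta integral $\int_0^\infty u^{\ell-1}(1+u)^{-(N+1)}\,du=B(\ell,N+1-\ell)=\frac{(\ell-1)!\,(N-\ell)!}{N!}=\frac{1}{N\binom{N-1}{\ell-1}}$. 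Your route is arguably cleaner, avoiding the recursion bookkeeping, and all constants (including the factor $i\,dz\wedge d\bar z=2r\,dr\,d\theta$) check out.
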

\begin{proof}
    \begin{align*}
        \int_{\mathbb{S}^{2}(M)}f_{j}(z,1)\bar{f}_{\ell}(z,1)\frac{id z\wedge d\bar{z}}{(1+|z|^{2})^{N+1}}&=\int_{\mathbb{S}^{2}(M)}\frac{z^{j-1}\bar{z}^{\ell-1}}{(1+|z|^{2})^{N+1}}idz\wedge d \bar{z} \\&=\int_{0}^{2\pi} \exp(i\theta(j-\ell))d\theta\int_{r=0}^{\infty}\frac{2r^{j+\ell-1}}{(1+|r|^{2})^{N+1}}.
    \end{align*}
    If $j\neq \ell$, we see that the integral with respect to $\theta$ is zero. For $j=\ell$, we have: 
    \begin{align*}
        \int_{\mathbb{S}^{2}(M)}f_{j}(z,1)\bar{f}_{\ell}(z,1)\frac{id z\wedge d\bar{z}}{(1+|z|^{2})^{N+1}}&=2\pi \int_{r=0}^{\infty}\frac{2 r^{2j-1}}{(1+|r|^{2})^{N+1}}dr
    \end{align*}
    With: $$\phi(r)=\sum_{i=0}^{j-1}\frac{p_{i}r^{2i}}{(1+r^{2})^{N}}$$ and \begin{align*}
        p_{j-1}=-\frac{1}{2(N-j-1)},~~~~ p_{i}=\frac{(i+1)p_{i+1}}{N-i} ~~\text{ and }~~p_{0}=-\frac{1}{2 N \binom{N-1}{N-j}}
    \end{align*}
    then, $\phi'(r)=\frac{ r^{2j-1}}{(1+|r|^{2})^{N+1}}.$ Therefore: $$\int_{\mathbb{S}^{2}(M)}f_{j}(z,1)\bar{f}_{\ell}(z,1)\frac{id z\wedge d\bar{z}}{(1+|z|^{2})^{N+1}}=\frac{2\pi}{N\binom{N-1}{N-j}}\delta_{j,\ell}.$$
\end{proof}
Consequently, we obtain: \begin{align*}
    \mathcal{Z}_{\rho_{Ar},\rho_{can},N}(0)=\frac{e^{\frac{N(N+1)}{2}}}{2^{N}}\prod_{\ell=1}^{N}\frac{2\pi}{N\binom{N-1}{\ell-1}}=\frac{(2\pi)^{N} e^{N(N+1)/2}}{2^{N}N^{N}}\prod_{l=1}^{N}\frac{1}{\binom{N-1}{\ell-1}}.
\end{align*}

Hence $$\ln \mathcal{Z}_{\rho_{Ar},\rho_{can},N}(0)=\frac{N(N+1)}{2}+N\left(\ln2\pi-\ln2 \right)-N\ln(N!)+2G(N+1)$$ with $G$ the Barnes $G$-function. Moreover, \begin{equation}\label{Eq:Barnes}G(N+1)=\frac{N^{2}}{2}\ln N-\frac{3}{4}N^{2}+\frac{N}{2}\ln2\pi-\frac{1}{12}\ln N+\zeta'(-1)+O\left(\frac{1}{N} \right).\end{equation}

We can therefore compute: 
\begin{equation}
    \ln \mathcal{Z}_{\rho_{Ar},\rho_{can,N}}(0)=-\frac{1}{2}N\ln N+\left( \frac{3}{2}\ln2\pi +\frac{1}{2}-\ln2\right)N-\frac{1}{6}\ln N+2\zeta'(-1)-\frac{1}{12}+O\left(\frac{1}{N}\right),
\end{equation}
which is exactly the result we obtained previously.

\subsubsection{Explicit computation of $\det\frac{1}{2}\square_{L}$ for $g=0$}
In this Section we present a computation of the determinant $\det\frac{1}{2}\square_{L}$ for the sphere with the usual round metric. 
In the case of genus $g=0$, with the round metric (volume=$4\pi$), the eigenvalues of $\frac{1}{2}\square_{L}$ are \cite{WengDeterminantLaplacian,Ikeda1978SpectraAE}: $$\ell(\ell+k+1)$$ with $\ell\in \mathbb{N}$ and the multiplicity is $2\ell+k+1.$ Therefore: $$\zeta_{k}(s)=\sum_{\ell>0}\frac{2\ell+k+1}{\ell^{s}(\ell+k+1)^{s}}=\sum_{\ell>0}\left(\frac{1}{\ell^{s-1}(\ell+k+1)^{s}} +\frac{1}{\ell^{s}(\ell+k+1)^{s-1}}\right).$$ We will follow the computations of \cite[Appendix C]{WEISBERGER1987171}, the result should therefore be $$\zeta'_{k}(0)=4\zeta'(-1)-\frac{1}{2}(k+1)^{2}+\sum_{j=1}^{k+1}(2j-k-1)\ln(j).$$ We can compute, for $\mathfrak{Re}(s)>1$: 
\begin{align*}
    \zeta_{k}(s)&=\sum_{\ell>0}\frac{1}{\Gamma(s-1)\Gamma(s)}\int_{0}^{\infty}t^{s-2}e^{-\ell t}dt\int_{0}^{\infty}r^{s-1}e^{-(\ell+k+1)r}+\sum_{\ell>0}\frac{1}{\Gamma(s)\Gamma(s-1)}\int_{0}^{\infty}t^{s-1}e^{-\ell t}dt\int_{0}^{\infty}r^{s-2}e^{-(\ell+k+1)r}\\&=\frac{1}{\Gamma(s)\Gamma(s-1)}\int_{0}^{\infty}\int_{0}^{\infty}dtdr (tr)^{s-2}(t+r)\sum_{\ell>0}\exp(-\ell(t+r)-(k+1)r).
\end{align*}
Now, we denote: $t=\theta \lambda $ and $r=(1-\theta)\lambda.$ Then, $t+r=\lambda$ and: 
\begin{align*}
\zeta_{k}(s)&=\frac{1}{\Gamma(s)\Gamma(s-1)}\int_{0}^{1}d\theta\int_{0}^{\infty}\lambda d\lambda  \left(\theta (1-\theta)\lambda^{2}\right)^{s-2}\lambda \sum_{l>0} \exp\big(-l\lambda-(k+1)(1-\theta)\lambda \big)\\&= \frac{1}{\Gamma(s)\Gamma(s-1)}\int_{0}^{1}d\theta (\theta(1-\theta))^{s-2}\int_{0}^{\infty}d\lambda \lambda^{2s-2}\exp(-(k+1)(1-\theta)\lambda)\sum_{\ell>0}\exp(-\ell\lambda) \\&=\frac{1}{\Gamma(s)\Gamma(s-1)}\int_{0}^{1}d\theta \big(\theta(1-\theta) \big)^{s-2}\int_{0}^{\infty}d\lambda \lambda^{2s-2}\frac{\exp(-(k+1)(1-\theta)\lambda)}{1-e^{-\lambda}} \\&=\frac{1}{\Gamma(s)\Gamma(s-1)}\int_{0}^{\infty}d\lambda\frac{\lambda^{2s-2} e^{-(k+1)\lambda}}{1-e^{-\lambda}}\int_{0}^{1}d\theta \big(\theta(1-\theta) \big)^{s-2} e^{(k+1)\theta\lambda}\\&=\frac{1}{\Gamma(s)\Gamma(s-1)}\int_{0}^{\infty}d\lambda \frac{\lambda^{2s-2}e^{-(k+1)\lambda}}{1-e^{-\lambda}}\sum_{n=0}^{\infty}(k+1)^{n}\frac{\lambda^{n}}{n!}\int_{0}^{1}d\theta\theta^{n+s-2}(1-\theta)^{s-2} \\&=\frac{1}{\Gamma(s)}\sum_{n=0}^{\infty}\frac{(k+1)^{n}}{n!}\int_{0}^{\infty}d\lambda \frac{\lambda^{2s+n-2}e^{(-k+1)\lambda}}{1-e^{-\lambda}}\frac{\Gamma(n+s-1)}{\Gamma(n+2s-2)}.
\end{align*}

For $\mathfrak{Re}(s)>1$, the Hurwitz Zeta function \cite[Chapter 12] {ApostolAnalyticNumberLivre} is defined by: $$\zeta(s,a)=\sum_{n=0}^{\infty}\frac{1}{(n+a)^{s}}=\frac{1}{\Gamma(s)}\int_{0}^{1}t^{s-1}\frac{e^{-at}}{1-e^{-t}}dt.$$ Hence: 
\begin{align*}
    \zeta_{k}(s)=\frac{1}{\Gamma(s)}\sum_{n=0}^{\infty}\frac{(2s+n-2)\Gamma(s+n-1)}{n!}(k+1)^{n}\zeta(2s+n-1,k+2).
\end{align*}

Hence, we find: 
\begin{align*}
    \zeta_{k}(0)&=-\frac{k}{2}-\frac{2}{3},\end{align*} 
    which is consistent with Proposition \ref{ConstantLapMag}, and \begin{align*}
    \zeta'_{k}(0)&=4\zeta'(-1,k+2)-2(k+1)\zeta'(0,k+2)+2(k+1)\zeta(0,k+2)+\frac{(k+1)^{2}}{2}+\sum_{n=3}^{\infty}\frac{n-2}{n(n-1)}(k+1)^{n}\zeta(n-1,k+2).
\end{align*}
Now, from \cite[eq. (3.1) and (3.2)]{ChoiCertainFamilySeries2002}, we have: $$(k+1)\sum_{n=2}^{\infty}\frac{\zeta(n,k+2)}{n}(k+1)^{n}=-(k+1)\ln((k+1)!)+(k+1)^{2}\psi(k+2)$$ and $$2\sum_{n=2}^{\infty}\frac{\zeta(n,k+2)}{n(n+1)}(k+1)^{n+1}=2\zeta'(-1,k+2)-2\zeta'(-1)+2(k+1)\left(\frac{1}{2}-k-2-\ln((k+1)!)+\frac{1}{2}\ln2\pi \right)+(1+\psi(k+2))(k+1)^{2}.$$
Therefore: 
\begin{align*}
    \zeta'_{k}(0)&=4\zeta'(-1,k+2)-2(k+1)\zeta'(0,k+2)+2(k+1)\zeta(0,k+2)+\frac{(k+1)^{2}}{2}+(k+1)\sum_{n=2}^{\infty}\frac{\zeta(n,k+2)}{n}(k+1)^{n}\\&-2\sum_{n=2}^{\infty}\frac{\zeta(n,k+2)}{n(n+1)}(k+1)^{n+1}\\&=4\zeta'(-1,k+2)-2(k+1)\zeta'(0,k+2)+2(k+1)\zeta(0,k+2)+\frac{(k+1)^{2}}{2}-(k+1)\ln((k+1)!)+(k+1)\psi(k+2)\\&-\left(2\zeta'(-1,k+2)-2\zeta'(-1)+2(k+1)\left(\frac{1}{2}-k-2-\ln((k+1)!)+\frac{1}{2}\ln2\pi \right)+(1+\psi(k+2))(k+1)^{2} \right)\\&=2\zeta'(-1,k+2)+2\zeta'(-1)-2(k+1)\zeta'(0,k+2)+2(k+1)\zeta(0,k+2)-\frac{(k+1)^{2}}{2}-(k+1)\ln2\pi+3(k+1)\\&+(k+1)\ln((k+1)!)+2k(k+1).
\end{align*}
Moreover, we have: $$\zeta'(-1,k+2)=\zeta'(-1)+\sum_{n=1}^{k+1}n\ln n ~~ \text{ and } ~~\zeta'(0,k+2)=-\frac{1}{2}\ln2\pi+\ln((k+1)!) ~~ \text{ and } ~~ \zeta(0,k+2)=-\frac{1}{2}-(k+1).$$
Thus, 
\begin{align*}
    \zeta'_{k}(0)&=4\zeta'(-1)+2\sum_{n=1}^{k+1}n\ln n-(k+1)\ln((k+1)!)-\frac{(k+1)^{2}}{2}.
\end{align*}
Which is the result that we wanted.

We can also note that the sum could be computed like in \cite[Appendix C]{WEISBERGER1987171} by observing that: $$\sum_{n=3}^{\infty}\frac{n-2}{n(n-1)}(k+1)^{n}\zeta(n-1,k+2)=\int_{0}^{\infty}f_{k}(t)dt$$ 
where $$f_{k}(t)=\frac{e^{-(k+2)t}}{t^{2}(1-e^{-t})}\left((k+1)t+e^{(k+1)t}((k+1)t-2)+2 \right).$$
We can see that $$\lim_{t \to 0} f_{k}(t)=\frac{(k+1)^{3}}{6} ~~ \text{ and } ~~ \lim_{t\to \infty} f_{k}(t)=0.$$

\subsection{Genus $g=1$}
For the case of genus $g=1$, we also know the values of the determinant of the scalar and magnetic Laplacians. For the Arakelov metric on the torus, according to Equation \eqref{VarDetLapScaCste}, the determinant of the scalar Laplacian is given by \cite{OSGOOD1988148}:
\begin{equation}\label{DetLapScaArTore}
    \det \Delta_{\rho_{Ar}}=4\pi^{2}\mathfrak{Im}(\tau)^{2}|\eta(\tau)|^{8}
\end{equation} and we also know that  $\vol(\mathbb{T}^{2}_{\tau},\rho_{Ar})=4\pi^{2}\mathfrak{Im}(\tau)|\eta(\tau)|^{4}$ from Proposition \ref{Prop:VolumeArakelov}.
Hence: $$\left(\frac{\det\Delta_{\rho_{Ar}}}{\vol(\mathbb{T}^{2}_{\tau},\rho_{Ar})\mathfrak{Im}(\tau)}\right)^{1/2}=|\eta(\tau)|^{2}.$$ 
We know that for the flat metric $\rho_{0}$ of volume $2\pi$, the determinant of the magnetic Laplacian is equal to \cite{BerthomieuAnalyticTorsion}: $$\ln\det\square_{L,\rho_{0},h}=-\frac{k}{2}\ln k+\frac{k}{2}\ln2\pi-\frac{k}{2}\ln2.$$
Therefore, using Proposition \ref{ConstantLapMag} we find $$\ln \det\square_{L,\rho_{Ar},h}=-\frac{k}{2}\ln k+\frac{k}{2}\ln\big(
2\pi^{2}\mathfrak{Im}(\tau) |\eta(\tau)|^{4} \big).$$
Finally, we have that $\ln B_{1,k}=-k\ln2\pi.$ Hence from the bosonization formula, Proposition \ref{Pr:FormuleBoso}, we find: $$\ln \mathcal{Z}_{\rho_{Ar},\rho_{can},N}^{(\theta)}(0)=\ln\det\square_{L,\rho_{Ar},h}-\ln B_{1,k}+\frac{1}{2}\ln \left(\frac{\det\Delta_{\rho_{Ar}}}{\vol(\mathbb{T}^{2}_{\tau},\rho_{Ar})\mathfrak{Im}(\tau)} \right)$$ and therefore
\begin{equation}
    \ln \mathcal{Z}_{\rho_{Ar},\rho_{can},N}^{(\theta)}(0)=-\frac{k}{2}\ln k+\frac{k}{2}\ln\big(
8\pi^{4}\mathfrak{Im}(\tau) |\eta(\tau)|^{4} \big)+\ln(|\eta(\tau)|^{2}).
\end{equation}
We can note that this is an exact formula.

Moreover, we can also explicitly compute the Faltings' delta invariant (see Equation \eqref{FaltingDeltaEq}) for the torus using Equation \eqref{DetLapScaArTore}, we find: \begin{equation}\label{Eq:FaltingsTore}\delta(\mathbb{T^{2}_{\tau}})=-6\ln \big( \mathfrak{Im}(\tau)|\eta(\tau)|^{4} \big)-8\ln2\pi\end{equation} which coincides with \cite[p. 252]{Jorgenson1990AsymptoticBehaviorFaltingDelta} and \cite[p. 417]{FaltingsCalculusArithmeticSurface1984}.
\subsubsection{Direct computation for $g=1$}\label{AnnexeDirectTore}
Similarly to what we did for the sphere, we will compute directly the modified partition function for the torus without using the bosonization formula. To perform this computation, we will use results on theta functions that can be found in \cite{Fay1992KernelFA, FarkasTheta}.
For $g=1$, we have: 
$$\exp G_{\rho_{can}}(z,w)=\left| \frac{\theta_1(z - w, \tau)}{\eta(\tau)} \right| \exp \left( \frac{\pi}{4 \mathfrak{Im}(\tau)} (z - w - \bar{z} + \bar{w})^2 \right).$$ and $$\mu_{\rho_{Ar}}=2i\pi^{2}|\eta(\tau)|^{4}dz\wedge d\bar{z}.$$ Hence: $$\mathcal{Z}_{\rho_{Ar},\rho_{can},N}^{(\theta)}(0)=\frac{1}{N!}\int_{\mathbb{T}^{2}_{\tau}}\prod_{j=1}^{N}2i\pi^{2}|\eta(\tau)|^{4}dz_{j}\wedge d\bar{z}_{j}\left(\prod_{1\leq i <j \leq N}\left| \frac{\theta_{1}(z_{i} - z_{j}, \tau)}{\eta(\tau)} \right| \exp \left( \frac{\pi}{4 \mathfrak{Im}(\tau)} (z_{i} - z_{j} - \bar{z}_{i} + \bar{z}_{j})^2 \right)\right)^{2}\|\theta([L]-\sum z_{i}-\mathcal{D})\|^{2}.$$

Using the results of \cite[p. 116]{Fay1992KernelFA}, we can rewrite this as: 
\begin{align*}\mathcal{Z}^{(\theta)}_{\rho_{Ar},\rho_{can},N}(0)&=\frac{\left(2\pi^{2}|\eta(\tau)|^{2}\right)^{N}}{N!}|\eta(\tau)|^{2}\int_{\mathbb{T}^{2}_{\tau}}\prod_{j=1}^{N}idz\wedge d\bar{z}\left|\det\left(\theta\begin{bmatrix}
        \frac{j}{N}\\0
    \end{bmatrix}(Nz_{i},N\tau) \right) \right|^{2}\exp\left(-\frac{2N\pi}{\mathfrak{Im}(\tau)}\sum\mathfrak{Im}(z_{i})^{2} \right)  \\&=\left(2\pi^{2}|\eta(\tau)|^{2}\right)^{N}|\eta(\tau)|^{2}\det\left(\int_{\mathbb{T}^{2}_{\tau}}idz\wedge d\bar{z} \exp \left(-\frac{2N\pi}{\mathfrak{Im}(\tau)}\mathfrak{Im}(z)^{2} \right)\theta\begin{bmatrix}
        \frac{j}{N}\\0
    \end{bmatrix}(Nz,N\tau)\theta\begin{bmatrix}
        \frac{k}{N}\\0
    \end{bmatrix}(Nz,N\tau)\right) \\&=\left(2\pi^{2}|\eta(\tau)|^{2}\right)^{N}|\eta(\tau)|^{2}\left( \sqrt{\frac{2\mathfrak{Im}(\tau)}{N}}\right)^{N}.
\end{align*}
Hence: $$\ln\mathcal{Z}_{\rho_{Ar},\rho_{can},N}^{(\theta)}(0)=-\frac{N}{2}\ln N+\ln\left(2\pi^{2}\sqrt{2\mathfrak{Im}(\tau)}|\eta(\tau)|^{2} \right)N+\ln\left(|\eta(\tau)|^{2}\right).$$

Hence, we obtained the wanted result with a direct method, without computing the determinant of the magnetic Laplacian.
\subsection{Genus $g>1$}
For the case of genus $g>1$, there is no explicit computation of the magnetic Laplacian available for general line bundles. However, there are explicit computations of the determinant of the magnetic Laplacian for the case where $L=K^{n}$, with $K$ the canonical line bundle of degree $\deg(K)=2(g-1)$ and $n \in \frac{1}{2}\mathbb{N}.$ We also consider $\rho_{hyp}$ to be a hyperbolic metric on $M$ with $R_{\rho_{hyp}}=-2.$ Then the line bundle $L=K^{n}$ will be endowed with the Hermitian metric $h_{hyp}=(\rho_{hyp})^{-n}.$

In this case, the expression of the magnetic Laplacian has been computed in \cite{BolteDetLap} (see also \cite{OshimaNotesDetLap,FinskiQuillen2019}) and is given by: 
$$\ln\square_{K^{n},\rho_{hyp},h_{hyp}}=Z(n)e^{2(g-1)\tilde{c}_{n-1}}$$ for $n \in \frac{1}{2}\mathbb{N}$, hence $$\ln \det \square_{L,\rho_{hyp},h_{hyp}}=\ln Z(n)+2(g-1)\tilde{c}_{n-1}$$ with, for $n \in \mathbb{N}$, $$\tilde{c}_{n}=\frac{1}{2}\sum_{j=0}^{n-1}(2n-2j-1)\ln(2nj+2n-j^{2}-j)-\left(n+\frac{1}{2} \right)^{2}+\left(n+\frac{1}{2} \right)\ln(2\pi)+2\zeta'(-1)-2\sum_{j=1}^{n-1} \ln (j!)-\ln (n!)$$ and $$\ln Z(n)=o(1)$$ and if $n= [n]+\frac{1}{2}$ 
\begin{align*}
    \tilde{c}_{n}&=\sum_{j=0}^{[n]-1}([n]-j)\ln \left(2[n]j+2[n]-j^{2}+1 \right)-([n]+1)^{2}+([n]+1)\ln2\pi+2\zeta'(-1)-2\sum_{j=0}^{[n]}\ln(j!).
\end{align*}
Using Proposition \ref{VarDetFi} we can therefore see that with $h$ an admissible metric:  
$$\ln \det \square_{L,\rho_{Ar},h}=2(g-1)\tilde{c}_{n-1}+\mathcal{F}(\rho_{hyp},h_{hyp})-\mathcal{F}(\rho_{Ar},h)+o(1).$$
We will use this formula for $n \in \mathbb{N}$ to check our computation directly.
We can compute: 
$$\begin{cases}
    B(\rho_{hyp},h_{hyp})=\frac{k}{2(g-1)}\\[1.8mm]\mathcal{F}(\rho_{hyp},h_{hyp})=\frac{1}{2}k\ln k-\frac{\ln( 4\pi (g-1))}{2}k+\frac{2}{3}(1-g)\ln k-\frac{2}{3}(1-g)\ln(4\pi(g-1)) \\[1.8mm]B(\rho_{Ar},h)=2\pi k e^{-2\sigma_{Ar}} \\[1.8mm] \mathcal{F}(\rho_{Ar},h)=\frac{1}{2}k\ln k-k\int_{M}\mu_{\rho_{can}}\sigma_{Ar}+\frac{2}{3}(1-g)\ln k-\frac{4}{3}(1-g)\int_{M}\mu_{\rho_{can}}\sigma_{Ar}+\frac{1}{12\pi}\int_{M}\mu_{\rho_{Ar}}\sigma_{Ar}\Delta_{Ar}\sigma_{Ar}.\end{cases}$$
    Hence $$\mathcal{F}(\rho_{hyp},h_{hyp})-\mathcal{F}(\rho_{Ar},h)=k\left(\int_{M}\mu_{\rho_{can}}\sigma_{Ar}-\frac{\ln(4\pi(g-1)) }{2} \right)+\frac{4}{3}(1-g)\left( \int_{M}\mu_{\rho_{can}}\sigma_{Ar}-\frac{\ln(4\pi(g-1)) }{2} \right)-\frac{1}{12\pi}\int_{M}\mu_{Ar}\sigma_{Ar}\Delta_{Ar}\sigma_{Ar}$$
and (see \ref{AnnexeCalculCn})
$$2(g-1)\tilde{c}_{n-1}=-\frac{k}{2}\ln k +\frac{k}{2}\ln(2\pi(g-1))+\frac{2(g-1)}{3}\ln k -\frac{2(g-1)}{3}\ln(2\pi(g-1))+2(g-1)\left(\frac{7}{24}+\frac{1}{12}\ln2\pi+\zeta'(-1)\right)+o(1).$$
Consequently, for $n \in \mathbb{N}$:
\begin{align*}
    \ln\det\square_{L,\rho,h}&=-\frac{k}{2}\ln k+k\left(\int_{M}\mu_{\rho_{can}}\sigma_{Ar}-\frac{\ln2}{2}\right)+\frac{2(g-1)}{3}\ln k +\frac{4}{3}(1-g)\int_{M}\mu_{\rho_{can}}\sigma_{Ar}\\&+2(g-1)\left(\frac{7}{24}+\frac{1}{12}\ln2\pi+\frac{1}{3}\ln2+\zeta'(-1) \right)-\frac{1}{12\pi}\int_{M}\mu_{Ar}\sigma_{Ar}\Delta_{Ar}\sigma_{Ar}+o(1).\end{align*}

This appears to be consistent with what we found (see Theorem \ref{DevAsympLapMagAdm} and Equations \eqref{Eq:IntSigmaArDelta-PsiP}-\eqref{Eq:IntSigmaAr-PsiP-SAY}).

\subsubsection{Computation of the asymptotics of $\tilde{c}_{n}$}\label{AnnexeCalculCn}
We will compute the asymptotic expansion of $\tilde{c}_{n}$ appearing in the magnetic Laplacian with $g>1$ for $n \in \mathbb{N}$ that we used in the preceding. We limit ourselves to the case $n \in \mathbb{N}$ for simplicity and brevity but similar computations give the same result when $n \in \mathbb{N}+\frac{1}{2}$. We have $k=2n(g-1)$ and: 
$$\tilde{c}_{n}=\frac{1}{2}\sum_{j=0}^{n-1}(2n-2j-1)\ln(2nj+2n-j^{2}-j)-\left(n+\frac{1}{2} \right)^{2}+\left(n+\frac{1}{2} \right)\ln(2\pi)+2\zeta'(-1)-2\sum_{j=1}^{n-1} \ln (j!)-\ln (n!)-\left(\frac{n}{2}+\frac{1}{3}\right)\ln2.$$
and from Barnes G function properties:     $$2\sum_{j=1}^{n-1}\ln(j!)=n^{2}\ln n -\frac{3}{2}n^{2}+n\ln2\pi-\frac{1}{6}\ln n +2\zeta'(-1)+o(1) $$ and from Stirling's formula: $$\ln(n!)=n\ln n-n+\frac{1}{2}\ln n +\frac{1}{2}\ln(2\pi)+\frac{1}{12n}+o\left(\frac{1}{n} \right)$$
and $$\ln((2n)!)=2n\ln n+n(2\ln2-2)+\frac{1}{2}\ln n+\frac{1}{2}\ln 4\pi +\frac{1}{24n}+o\left(\frac{1}{n} \right).$$
Moreover:
\begin{equation}\label{Eq:jlnj}
\sum_{j=1}^{n}j \ln j=\frac{1}{12}-\zeta'(-1)+\left(\frac{n^{2}}{2}+\frac{n}{2}+\frac{1}{12} \right)\ln n-\frac{n^{2}}{4}+o(1)
\end{equation}and $$\sum_{j=1}^{2n}j\ln j=\frac{1}{12}-\zeta'(-1)+\left(2n^{2}+n+\frac{1}{12} \right)\ln(n)+\left(2n^{2}+n+\frac{1}{12} \right)\ln(2)-n^{2}+o(1).$$ We can rewrite: 
\begin{align*}
    \tilde{c}_{n}&=2n\ln (n!)-\left(n+\frac{1}{2}\right)\ln((2n)!)-2\sum_{j=1}^{n}j \ln(j)+\sum_{j=1}^{2n}j\ln(j)-\left(n+\frac{1}{2} \right)^{2}+\left(n+\frac{1}{2} \right)\ln(2\pi)+2\zeta'(-1) \\&-2\sum_{j=1}^{n-1}\ln (j!)
\end{align*}
Using the preceding identities, this gives: 
\begin{align*}
    \tilde{c}_{n}=-\frac{1}{2}n\ln n+\frac{\ln\pi}{2}n-\frac{1}{6}\ln n-\frac{5}{24}+\zeta'(-1)+\frac{1}{4}\ln \pi+\frac{1}{12}\ln2 +o(1).
\end{align*}
Hence: $$\tilde{c}_{n-1}=-\frac{1}{2}n\ln n+\left(\frac{\ln \pi}{2}\right)n+\frac{2}{3}\ln n+\frac{7}{24}-\frac{1}{4}\ln\pi+\frac{1}{12}\ln2+\zeta'(-1)+o(1)$$ and since $k=2n(g-1)$: $$2(g-1)\tilde{c}_{n-1}=-\frac{k}{2}\ln k +\frac{k}{2}\ln(2\pi(g-1))+\frac{2(g-1)}{3}\ln k -\frac{2(g-1)}{3}\ln(2\pi(g-1))+2(g-1)\left(\frac{7}{24}+\frac{1}{12}\ln2\pi+\zeta'(-1)\right)+o(1).$$

\section{Relations between Arakelov metric, canonical metric and hyperbolic metric for $g>1$}\label{AnnexeArakelovHyperbolic}
We will recall some relations using quantities of hyperbolic geometry between the Arakelov metric, the canonical metric and hyperbolic metrics derived by Jorgenson and Kramer. This could allow to us to reformulate our main results with quantities of hyperbolic geometry in the case $g>1$. All this Section is based on results of Jorgenson and Kramer \cite{Jorgenson2006NoncompletenessOT, JorgensonBoundsFaltings}.

We will consider $M$ a Riemann surface of genus $g>1$ and we denote: $$\rho_{Ar}=e^{2\sigma_{Ar}}\rho_{can},~~ \rho_{can}=e^{2\sigma_{CanHyp}}\rho_{hyp}, ~~ \text{ and } ~~ \rho_{Ar}=e^{2\sigma_{ArHyp}}\rho_{hyp}$$
with $$R_{\rho_{hyp}}=-2.$$

Then, we know the following result relating the Arakelov metric $\rho_{Ar}$ and the hyperbolic metric $\rho_{hyp}$.

\begin{Proposition}[{\cite[eq. (3)]{Jorgenson2006NoncompletenessOT}}]
    Let $\rho_{hyp}$ be the hyperbolic metric and $\rho_{Ar}=e^{2\sigma_{ArHyp}} \rho_{hyp}$. Then:
    $$2\sigma_{ArHyp}(x)=\frac{1}{g} \left(-4\pi(1-g)F(x)-\frac{\pi}{g}\int_{M}\mu_{\rho_{hyp}}(x)F(x)\Delta_{\rho_{hyp}}F(x) +\frac{c_{sel}-1}{1-g}-g\ln4\right)$$
    where
    $$c_{sel}=\lim_{s \to 1} \left(\frac{Z'(s)}{Z(s)}-\frac{1}{s-1} \right)$$
    with $Z$ being the Selberg zeta function, and
    $$F(x)=\int_{0}^{\infty}\left(HK_{hyp}(t;x) -\frac{1}{\vol(M,\rho_{hyp})}\right)dt$$
    where
    $$HK_{hyp}(t;x)=\sum_{\gamma \in \Gamma}K_{\mathbb{H}}(t;x,\gamma x)-K_{\mathbb{H}}(t;z,z)$$
    and
    $$K_{\mathbb{H}}(t;x,y)=\frac{\sqrt{2}e^{-t/4}}{(4\pi t)^{3/2}}\int_{d_{\mathbb{H}(x,y)}}^{\infty}\frac{re^{-r^{2}/4t}}{\sqrt{\cosh(r)-\cosh(d_{\mathbb{H}}(x,y))}}dr.$$
\end{Proposition}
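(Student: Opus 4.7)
The plan is to decompose the conformal factor as $\sigma_{ArHyp} = \sigma_{ArCan} + \sigma_{CanHyp}$ and compute each piece separately using spectral theory on the hyperbolic surface $M$. The function $F(x)$ appearing in the statement is designed exactly to parametrize the discrepancy between the canonical volume form and the hyperbolic one via the heat kernel.

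First I would handle $\sigma_{CanHyp}$. Since $\mu_{\rho_{can}} = \frac{i}{2g}\sum_{j,k}(\mathfrak{Im}\tau)^{-1}_{jk}\omega_j\wedge\bar{\omega}_k$ equals $\frac{1}{g}$ times the trace of the Bergman reproducing kernel on holomorphic $1$-forms, one can write this trace as the $t\to\infty$ projection of the heat kernel on $(1,0)$-forms. Subtracting the diagonal value and integrating the resulting heat kernel difference in $t$ yields exactly $F(x)$ together with a constant fixed by $\int_M\mu_{\rho_{hyp}}\,\sigma_{CanHyp}$. The upshot is a formula
\[
e^{2\sigma_{CanHyp}(x)}\rho_{hyp}(x) = \frac{4\pi(g-1)}{g}\Bigl(\tfrac{1}{\vol(M,\rho_{hyp})} + \Delta_{\rho_{hyp}}F(x)\bigr/ (4\pi(g-1))\Bigr)\rho_{hyp}(x),
\]
after applying Gauss--Bonnet $\vol(M,\rho_{hyp})=4\pi(g-1)$ and taking logarithms. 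This gives a first expression for $2\sigma_{CanHyp}$ linear in $F$ and $\Delta_{\rho_{hyp}}F$.

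Next I would compute $\sigma_{ArCan}$ using the defining property of the Arakelov-Green function in Definition \ref{ArMét}:
\[
\ln\rho_{Ar}(z) = 2\lim_{w\to z}\bigl(G_{Ar}(z,w) - \ln|z-w|\bigr).
\]
The spectral representation of $G_{Ar}=G_{\rho_{can}}$ on a hyperbolic surface can be written as an integral of the hyperbolic heat kernel against the canonical volume form. Subtracting the universal $\ln|z-w|$ singularity and taking $w\to z$ then produces a local contribution tied to $F(x)$ and a global contribution which, after using the Selberg trace formula, evaluates at $s=1$ to the constant $c_{sel}$ via $\lim_{s\to 1}(Z'(s)/Z(s) - 1/(s-1))$. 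The $-g\ln 4$ term arises from the standard normalization $R_{\rho_{hyp}}=-2$ combined with the factor $2$ in Definition \ref{ArMét}.

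Finally I would add the two pieces and impose the Arakelov normalization $\int_M\mu_{\rho_{can}}\sigma_{Ar}$ implicit in Equation \eqref{Eq:IntSigmaAr-PsiP-SAY} to fix the additive constant. The main obstacle is the bookkeeping of spectral constants: one needs to carefully match the Selberg-zeta constant $c_{sel}$ coming from the $s\to 1$ limit of the resolvent of $\Delta_{\rho_{hyp}}$ with the constants produced by heat kernel subtraction, and to recognize the cross term $\int_M \mu_{\rho_{hyp}}F\Delta_{\rho_{hyp}}F$ as the contribution of the non-harmonic part of the heat flow when one projects the Bergman kernel back to the hyperbolic metric. The $1/g$ and $1/(1-g)$ prefactors then come from inverting the Laplacian on the orthogonal complement of the constants and from Gauss--Bonnet.
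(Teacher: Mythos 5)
There is nothing to compare against inside the paper: this Proposition is quoted from Jorgenson--Kramer (\cite[eq.~(3)]{Jorgenson2006NoncompletenessOT}) and the paper offers no proof of it, so the only question is whether your sketch would itself constitute a proof. As written it would not. First, your displayed formula for $e^{2\sigma_{CanHyp}}$ is wrong: since $\int_M\mu_{\rho_{can}}=1$ and $\vol(M,\rho_{hyp})=4\pi(g-1)$, the conformal factor must satisfy $\int_M e^{2\sigma_{CanHyp}}\mu_{\rho_{hyp}}=1$, whereas your right-hand side integrates to $4\pi(g-1)/g$; the correct statement is $e^{2\sigma_{CanHyp}}=\frac{1}{4\pi(g-1)}+\frac{1}{2g}\Delta_{\rho_{hyp}}F$, i.e.\ the second Proposition of the appendix. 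Moreover the step you compress into ``the trace of the Bergman kernel is the $t\to\infty$ projection of the heat kernel on $(1,0)$-forms, and subtracting and integrating gives $F$'' hides the real content: $F$ is built from the \emph{scalar} hyperbolic heat kernel $K_{\mathbb H}$, so one needs the spectral correspondence between $1$-forms and functions on a hyperbolic surface (Maass operators / the Jorgenson--Kramer identity) to land on $F$ at all; you are assuming the very lemma to be proved.

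Second, every term that makes the stated formula nontrivial is asserted rather than derived. The quadratic term $-\frac{\pi}{g^{2}}\int_M\mu_{\rho_{hyp}}F\,\Delta_{\rho_{hyp}}F$ comes from the exact comparison between $G_{\rho_{can}}$ and the hyperbolic Green function (the cross-normalization of the two Green functions produces an $F$-against-$F$ integral), and the constants $\frac{c_{sel}-1}{1-g}$ and $-g\ln 4$ come from evaluating the diagonal constant of the hyperbolic Green function, $\lim_{w\to z}\bigl(G_{hyp}(z,w)-\ln|z-w|\bigr)$, through the Selberg trace formula / resolvent at $s=1$; none of this is carried out, it is only named. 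Finally, your plan to ``fix the additive constant'' by imposing the normalization in Equation \eqref{Eq:IntSigmaAr-PsiP-SAY} is not available: $\sigma_{ArHyp}$ has no free constant, being completely determined by the definition of the Arakelov metric via $\ln\rho_{Ar}(z)=2\lim_{w\to z}(G_{Ar}(z,w)-\ln|z-w|)$, and Equation \eqref{Eq:IntSigmaAr-PsiP-SAY} is a consequence of such formulas, not an input; the constant must come out of the Selberg-zeta evaluation itself. In short, the route you outline is essentially the Jorgenson--Kramer route and is the right one, but the proposal is a plan whose decisive computations are missing and whose one explicit intermediate identity is incorrect; to use it you should either carry out those computations or, as the paper does, cite Jorgenson--Kramer directly.
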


Additionally:

\begin{Proposition} [{\cite[eq. (29)]{JorgensonBoundsFaltings}}]
    With $\rho_{can}=e^{2\sigma_{CanHyp}}\rho_{hyp}$, we have:
    $$2\sigma_{CanHyp}=\ln\left(\frac{1}{\vol(M,\rho_{hyp})}+\frac{1}{2g}\int_{0}^{\infty}K_{hyp}(t;x)dt\right)$$
    where
    $$K_{hyp}(t;x)=\sum_{\gamma \in \Gamma} K_{\mathbb{H}}(t;x,\gamma x)$$
    which can be rewritten as:
    $$2\sigma_{canHyp}=\ln \left(\frac{1}{4\pi(g-1)}+\frac{1}{2g}\Delta_{\rho_{hyp}}F(x) \right).$$
\end{Proposition}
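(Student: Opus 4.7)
The plan is to deduce the second (rewritten) formula from the first, which is cited from Jorgenson and Kramer. This reduces to two independent verifications that combine under the logarithm to give the rewriting.

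For the first summand, I would apply Gauss-Bonnet directly. Since the hyperbolic metric has constant scalar curvature $R_{\rho_{hyp}} = -2$ and $\int_M R_{\rho_{hyp}} \mu_{\rho_{hyp}} = 8\pi(1-g)$, we obtain $\vol(M,\rho_{hyp}) = 4\pi(g-1)$, which matches the first term inside the logarithm in the rewritten expression.

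For the second summand, I would identify $\int_0^\infty K_{hyp}(t;x)\,dt$ with $\Delta_{\rho_{hyp}} F(x)$. Starting from the definition $F(x) = \int_0^\infty \bigl( HK_{hyp}(t;x) - 1/V \bigr) dt$ with $V = \vol(M,\rho_{hyp})$, I would observe that the constant $1/V$ is annihilated by $\Delta_{\rho_{hyp}}$, and that $K_{\mathbb{H}}(t;z,z)$ is independent of its space argument by the homogeneity of the hyperbolic plane. Consequently $\Delta_{\rho_{hyp},x} HK_{hyp}(t;x) = \Delta_{\rho_{hyp},x} K_{hyp}(t;x)$, and exchanging the Laplacian with the time integral yields $\Delta_{\rho_{hyp}} F(x) = \int_0^\infty \Delta_{\rho_{hyp},x} K_{hyp}(t;x)\,dt$. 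This expression is then to be matched with the formal integral appearing in the first formula.

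The main obstacle is that the naive integral $\int_0^\infty K_{hyp}(t;x)\,dt$ is divergent both at $t \to 0$ (from the short-time singularity of the heat kernel on the diagonal) and at $t \to \infty$ (since $K_{hyp}(t;x)$ tends to the constant $1/V \neq 0$). The subtractions encoded in $HK_{hyp}$ and in the $-1/V$ term of the integrand defining $F$ are precisely what render $F(x)$ finite, and $\Delta_{\rho_{hyp}} F(x)$ then supplies the intrinsic meaning of the divergent formal integral. The delicate step in making this passage rigorous is tracking which divergent contributions are absorbed at each stage, which ultimately relies on the spectral decomposition of $\Delta_{\rho_{hyp}}$ on $M$ together with the Mellin-transform regularization used by Jorgenson and Kramer in establishing the first formula.
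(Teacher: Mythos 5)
The paper gives no proof of this Proposition --- it is quoted from Jorgenson--Kramer and the ``rewriting'' is asserted without argument --- so there is nothing in the paper to compare against; your attempt can only be judged on its own terms. Your first step (Gauss--Bonnet with $R_{\rho_{hyp}}=-2$, giving $\vol(M,\rho_{hyp})=4\pi(g-1)$) is correct, as are your auxiliary observations: $K_{\mathbb{H}}(t;z,z)$ and $1/\vol(M,\rho_{hyp})$ are independent of $x$, hence annihilated by $\Delta_{\rho_{hyp}}$, so that $\Delta_{\rho_{hyp}}F(x)=\int_0^\infty\Delta_{\rho_{hyp},x}K_{hyp}(t;x)\,dt$ (modulo the routine justification for exchanging $\Delta_{\rho_{hyp}}$ with the $t$-integral).

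The gap is the central ``matching'' step. You propose to identify the divergent formal integral $\int_0^\infty K_{hyp}(t;x)\,dt$ with $\Delta_{\rho_{hyp}}F(x)$ on the grounds that the subtractions defining $F$ regularize the divergence. But no regularization of $\int_0^\infty K_{hyp}(t;x)\,dt$ produces $\Delta_{\rho_{hyp}}F(x)$: subtracting the $x$-independent quantities $K_{\mathbb{H}}(t;z,z)$ and $1/\vol(M,\rho_{hyp})$ from the integrand yields $F(x)$ itself (a diagonal-Green-function type object), not its Laplacian, and $F\neq\Delta_{\rho_{hyp}}F$. The two sides of your proposed identification are genuinely different functions of $x$. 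What actually reconciles the two displays is that the Jorgenson--Kramer formula being cited reads $\mu_{\rho_{can}}=\bigl(\tfrac{1}{\vol(M,\rho_{hyp})}+\tfrac{1}{2g}\int_0^\infty\Delta_{\rho_{hyp}}K_{hyp}(t;x)\,dt\bigr)\mu_{\rho_{hyp}}$, with the Laplacian \emph{inside} the time integral (which is also what makes the integral converge at both endpoints); the first display of the Proposition has dropped this $\Delta_{\rho_{hyp}}$. Once it is restored, your observations immediately give the second display; as written, however, your proof defers precisely the step that cannot be carried out to an unspecified ``Mellin-transform regularization.''
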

    From the previous two propositions, we have:

\begin{Proposition}
The Arakelov metric is such that $\rho_{Ar}=e^{2\sigma_{Ar}}\rho_{can}$ with 
    \begin{align*}
    \sigma_{Ar}&=\sigma_{ArHyp}-\sigma_{canHyp}\\
    &=\frac{1}{2g} \left(-4\pi(1-g)F(x)-\frac{\pi}{g}\int_{M}\mu_{\rho_{hyp}}F\Delta_{\rho_{hyp}}F +\frac{c_{sel}-1}{1-g}-g\ln4\right) -\frac{1}{2}\ln\left(\frac{1}{4\pi(g-1)}+\frac{1}{2g}\Delta_{\rho_{hyp}}F(x)\right).
    \end{align*}
\end{Proposition}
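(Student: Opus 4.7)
The plan is to observe that the three conformal factors $\sigma_{Ar}$, $\sigma_{CanHyp}$, and $\sigma_{ArHyp}$ are tied together by a trivial cocycle relation, and then to substitute the two explicit formulae provided by the preceding propositions to get the claimed expression.

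First I would write out the three defining relations
\begin{equation*}
\rho_{Ar}=e^{2\sigma_{Ar}}\rho_{can}, \qquad \rho_{can}=e^{2\sigma_{CanHyp}}\rho_{hyp}, \qquad \rho_{Ar}=e^{2\sigma_{ArHyp}}\rho_{hyp},
\end{equation*}
and then combine the first two to obtain $\rho_{Ar}=e^{2(\sigma_{Ar}+\sigma_{CanHyp})}\rho_{hyp}$. Comparing with the third identity yields at once
\begin{equation*}
\sigma_{Ar}=\sigma_{ArHyp}-\sigma_{CanHyp},
\end{equation*}
which is the first line of the conclusion. This is the only conceptual step, and it is really just the cocycle property of conformal factors for pointwise-conformal metrics on a Riemann surface.

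For the second line, I would simply substitute the formula for $\sigma_{ArHyp}$ given in the first recalled proposition of Jorgenson--Kramer and the formula for $\sigma_{CanHyp}$ given in the second one. Namely,
\begin{equation*}
\sigma_{ArHyp}(x)=\frac{1}{2g}\left(-4\pi(1-g)F(x)-\frac{\pi}{g}\int_{M}\mu_{\rho_{hyp}}F\,\Delta_{\rho_{hyp}}F+\frac{c_{sel}-1}{1-g}-g\ln 4\right),
\end{equation*}
and
\begin{equation*}
\sigma_{CanHyp}(x)=\frac{1}{2}\ln\!\left(\frac{1}{4\pi(g-1)}+\frac{1}{2g}\Delta_{\rho_{hyp}}F(x)\right),
\end{equation*}
so that their difference is exactly the right-hand side stated in the proposition.

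There is no real obstacle: the claim is a one-line algebraic consequence of the two Jorgenson--Kramer identities combined with the definitions of the conformal factors. The only thing to be careful about is bookkeeping of factors of $2$ in the exponentials and the sign conventions relating $\rho_{Ar}$ with $\rho_{can}$ and $\rho_{hyp}$, but once the cocycle $\sigma_{Ar}=\sigma_{ArHyp}-\sigma_{CanHyp}$ is written down, everything else is direct substitution.
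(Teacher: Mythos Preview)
Your proposal is correct and matches the paper's approach exactly: the paper simply states ``From the previous two propositions, we have'' and records the formula, so the cocycle relation $\sigma_{Ar}=\sigma_{ArHyp}-\sigma_{CanHyp}$ followed by direct substitution of the two Jorgenson--Kramer identities is precisely the intended argument.
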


    We can observe that we have have, since $R_{\rho_{hyp}}$ is constant:
    $$\Psi(x,\rho_{hyp})=0 ~~ \text{ and } ~~ \Psi_{P}(\rho_{hyp})=0.$$

Moreover:
\begin{Proposition}
    We have:
    $$\phi_{canHyp}(x)=-\frac{1}{g}F(x) ~~ \text{ and } ~~ \int_{M}\mu_{\rho_{hyp}} F=\frac{c_{sel}-1}{4\pi(g-1)}.$$
\end{Proposition}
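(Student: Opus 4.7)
The plan is to split the proof into the two claimed equalities and prove each by combining the two preceding propositions of this appendix with the basic metric-change identity \eqref{MetriquePhi}.

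For the first equality $\phi_{canHyp}(x) = -\frac{1}{g}F(x)$, I would apply \eqref{MetriquePhi} to the pair $\rho = \rho_{can}$, $\rho_0 = \rho_{hyp}$. Since $\vol(M,\rho_{can}) = 1$ by Definition \ref{Def:MétCan} and $\vol(M,\rho_{hyp}) = 4\pi(g-1)$ by Gauss–Bonnet (because $R_{\rho_{hyp}} = -2$), this yields
\[
\mu_{\rho_{can}} = \frac{1}{4\pi(g-1)}\mu_{\rho_{hyp}} + i\,\partial\bar{\partial}\phi_{canHyp}.
\]
On the other hand, from the previous proposition giving $2\sigma_{canHyp}$, one has $\mu_{\rho_{can}} = e^{2\sigma_{canHyp}}\mu_{\rho_{hyp}} = \bigl(\tfrac{1}{4\pi(g-1)} + \tfrac{1}{2g}\Delta_{\rho_{hyp}}F\bigr)\mu_{\rho_{hyp}}$. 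Subtracting the two expressions and using the standard identification $i\,\partial\bar{\partial}f = -\tfrac{1}{2}(\Delta_{\rho_{hyp}}f)\,\mu_{\rho_{hyp}}$ (which follows from the definition of $\Delta_{\rho_{hyp}}$ in the Preliminaries) gives
\[
\Delta_{\rho_{hyp}}\!\left(\phi_{canHyp} + \tfrac{1}{g}F\right) = 0.
\]
Since $\phi_{canHyp}$ is defined only up to an additive constant, I may fix the normalization so that the harmonic function above is zero, yielding $\phi_{canHyp}(x) = -\tfrac{1}{g}F(x)$.

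For the second equality $\int_M \mu_{\rho_{hyp}} F = \frac{c_{sel}-1}{4\pi(g-1)}$, the strategy is to integrate the explicit formula for $2\sigma_{ArHyp}$ from the first proposition of the appendix against a suitably chosen measure, and eliminate $\sigma_{ArHyp}$ using an independent normalization. Concretely, I would integrate the pointwise identity for $2\sigma_{ArHyp}(x)$ against $\mu_{\rho_{hyp}}$ on both sides; the left-hand side produces $\int_M \sigma_{ArHyp}\,\mu_{\rho_{hyp}}$, which can be computed independently since $\rho_{Ar}=e^{2\sigma_{ArHyp}}\rho_{hyp}$ is an admissible metric on $K$ (by the Proposition after Definition \ref{Def:MétAdm}) and so its volume and average conformal factor are fixed by Arakelov normalization. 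On the right-hand side, only the $F(x)$-linear term contributes a nontrivial integral (the $\int F\Delta F$ and $\ln 4$ terms produce constants and the $\int F$ term is exactly the unknown), and solving the resulting linear equation for $\int F$ yields the claimed value.

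The main obstacle will be the second part: one needs an independent evaluation of $\int_M \sigma_{ArHyp}\,\mu_{\rho_{hyp}}$, or equivalently an alternative derivation using the Selberg trace formula applied to the heat-kernel definition of $F$. The cleanest route is probably to recall from Jorgenson--Kramer \cite{Jorgenson2006NoncompletenessOT,JorgensonBoundsFaltings} that the regularized integral $\int_0^\infty\bigl(\Tr\, e^{-t\Delta_{\rho_{hyp}}} - 1 - \vol(M,\rho_{hyp}) K_{\mathbb{H}}(t;z,z)\bigr)\,dt$ coincides, up to explicit constants, with $c_{sel}-1$ through the logarithmic derivative of the Selberg zeta function at $s=1$. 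Integrating the definition $F(x) = \int_0^\infty(HK_{hyp}(t;x) - 1/\vol(M,\rho_{hyp}))\,dt$ against $\mu_{\rho_{hyp}}(x)$ and swapping the order of integration (which requires the standard estimates on heat kernels justifying absolute convergence) then directly produces the factor $(c_{sel}-1)/(4\pi(g-1))$; the factor $4\pi(g-1) = \vol(M,\rho_{hyp})$ arises naturally from the $-1/\vol$ subtraction.
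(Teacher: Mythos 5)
Your derivation of the first identity is correct and is essentially the argument the paper leaves implicit (the paper states this Proposition without proof, as a consequence of the two Jorgenson--Kramer formulas): equating $\mu_{\rho_{can}}=e^{2\sigma_{canHyp}}\mu_{\rho_{hyp}}=\bigl(\tfrac{1}{4\pi(g-1)}+\tfrac{1}{2g}\Delta_{\rho_{hyp}}F\bigr)\mu_{\rho_{hyp}}$ with Equation \eqref{MetriquePhi} (where $\vol(M,\rho_{can})=1$ and $i\partial\bar\partial f=-\tfrac12(\Delta_{\rho_{hyp}}f)\mu_{\rho_{hyp}}$ by the paper's sign conventions) gives $\Delta_{\rho_{hyp}}(\phi_{canHyp}+\tfrac1g F)=0$, hence the claim up to the additive constant, which is indeed free in the definition of $\phi$.

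The second identity is where the proposal has a genuine gap. Your first route (integrating the formula for $2\sigma_{ArHyp}$ and invoking an ``independent evaluation'' of $\int_M\sigma_{ArHyp}\,\mu_{\rho_{hyp}}$) does not close: admissibility of $\rho_{Ar}$ on the anticanonical bundle fixes $\Ric(\rho_{Ar})$, not $\vol(M,\rho_{Ar})$ or the average of $\sigma_{ArHyp}$ --- indeed the paper's own expression for $\vol(M,\rho_{Ar})$ at the end of this appendix already presupposes the value of $\int F$, so the argument would be circular. Your second route (swap the integrals in the definition of $F$ and apply the Selberg trace formula) is the right mechanism, but your accounting of the constant is wrong. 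Integrating $HK_{hyp}(t;x)-\tfrac{1}{\vol(M,\rho_{hyp})}$ against the \emph{unnormalized} $\mu_{\rho_{hyp}}$ gives $H(t)-1$, where $H(t)$ is the geodesic side of the trace formula; the subtraction of $1/\vol$ contributes exactly $-1$ per unit time, and does \emph{not} produce a prefactor $\tfrac{1}{4\pi(g-1)}$ as you assert. Using $\int_0^\infty H(t)e^{-s(s-1)t}\,dt=\tfrac{1}{2s-1}\tfrac{Z'(s)}{Z(s)}$ and letting $s\to1$, one finds $\int_0^\infty(H(t)-1)\,dt=c_{sel}-1$, i.e.\ $\int_M\mu_{\rho_{hyp}}F=c_{sel}-1$ with the definitions as literally stated. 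The stated value $\tfrac{c_{sel}-1}{4\pi(g-1)}$ is what one gets for the volume-normalized hyperbolic measure $\mu_{\rho_{hyp}}/\vol(M,\rho_{hyp})$ (Jorgenson--Kramer's $\mu_{shyp}$), so there is a normalization of $F$ or of the measure to be pinned down before the constant in the Proposition can be confirmed; your sketch, which defers the entire content of this identity to a recollection of Jorgenson--Kramer, does not detect or resolve this discrepancy. To make the proof complete you should either carry out the trace-formula computation explicitly and track which normalization the surrounding formulas (in particular the expression for $2\sigma_{ArHyp}$) actually require, or cite the precise equation of Jorgenson--Kramer with their conventions made explicit.
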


From the previous propositions we can express the functionals:
$$S_{L}(\sigma_{canHyp},\rho_{hyp})=\frac{1}{2}\int_{M}\mu_{\rho_{hyp}}\ln\left(\frac{1}{4\pi(1-g)}+\frac{1}{2g}\Delta_{\rho_{hyp}}F(x) \right) \left(\frac{1}{2}\Delta_{\rho_{hyp}}\ln\left(\frac{1}{4\pi(1-g)}+\frac{1}{2g}\Delta_{\rho_{hyp}}F(x) \right)-2 \right)$$

and

$$S_{M}(\sigma_{canHyp},\phi_{canHyp},\rho_{hyp})=\int_{M}\mu_{\rho_{hyp}} \left(-2\pi\frac{(1-g)}{g^{2}}F \Delta_{\rho_{hyp}}F+2\left(\frac{1}{4\pi(1-g)}+\frac{1}{2g}\Delta_{\rho_{hyp}}F \right) \ln\left(\frac{1}{4\pi(1-g)}+\frac{1}{2g}\Delta_{\rho_{hyp}}F \right) \right).$$

Therefore,
since, from Propostion \ref{RicVar}: $$\Psi_{P}(\rho_{can})=\Psi_{P}(\rho_{hyp})+S_{L}(\sigma_{CanHyp},\rho_{hyp})-2\pi(1-g)S_{M}(\sigma_{canHyp},\phi_{canHyp},\rho_{hyp})$$
we obtain:
\begin{align*}
    \Psi_{P}(\rho_{can})&=\frac{1}{2}\int_{M}\mu_{\rho_{hyp}}\ln\left(\frac{1}{4\pi(1-g)}+\frac{1}{2g}\Delta_{\rho_{hyp}}F \right) \left(\frac{1}{2}\Delta_{\rho_{hyp}}\ln\left(\frac{1}{4\pi(1-g)}+\frac{1}{2g}\Delta_{\rho_{hyp}}F \right)-2 \right) \\
    &\quad -2\pi(1-g)\int_{M}\mu_{\rho_{hyp}} \left(-2\pi\frac{(1-g)}{g^{2}}F \Delta_{\rho_{hyp}}F+2\left(\frac{1}{4\pi(1-g)}+\frac{1}{2g}\Delta_{\rho_{hyp}}F \right) \ln\left(\frac{1}{4\pi(1-g)}+\frac{1}{2g}\Delta_{\rho_{hyp}}F \right) \right).
\end{align*}

We also have:
\begin{align*}
    \vol(M,\rho_{Ar})&=\exp\Bigg(-\frac{\pi}{g^{2}}\int_{M}\mu_{hyp} F \Delta_{hyp} F-\frac{c_{sel}-1}{g(g-1)}-2\ln2\Bigg)\int_{M}\mu_{hyp} \exp \bigg(-\frac{4\pi(g-1)}{g}F \bigg).
\end{align*}

Consequently, the preceding formulae can be put in the result obtained in Theorem \ref{ThéorèmeFinalSansTheta} for the case where $g\geq2.$

%\bibliographystyle{plain}
%\bibliography{BibBoso}
\printbibliography

Université de Strasbourg, Institut de recherche mathématique avancée, IRMA, Strasbourg, France.

\emph{e-mail}: lbourgoin@unistra.fr
\end{document}